\newcommand{\tnorm}{\@ifstar\@tnorms\@tnorm}
\newcommand{\@tnorm}[2][]{%
  \mathopen{#1|\mkern-1.5mu#1|\mkern-1.5mu#1|}
  #2
  \mathclose{#1|\mkern-1.5mu#1|\mkern-1.5mu#1|}
}
\newcommand{\jump}[1]{\llbracket #1 \rrbracket}
\newtheorem{theorem}{Theorem}
\newtheorem{lemma}{Lemma}
\newtheorem{corollary}{Corollary}
\newtheorem{remark}{Remark}
\begin{document} 
%---------------------------------------------------------------------
\author{Aycil Cesmelioglu$^1$, Jeonghun J. Lee$^2$, Sander Rhebergen$^3$, Dorisa Tabaku$^4$ }
\address{$^1$Department of Mathematics and Statistics, Oakland
  University, Michigan, USA \\ 
  $^2$ Department of Mathematics, Baylor University, Waco, Texas, USA \\
  $^3$ Department of Applied Mathematics, University of
  Waterloo, Ontario, Canada \\
  $^4$Department of Mathematics and Statistics, Oakland
  University, Michigan, USA}

\email{$^1$cesmelio@oakland.edu, $^2$jeonghun\_lee@baylor.edu}
\email{$^3$srheberg@uwaterloo.ca, $^4$dorisatabaku@oakland.edu}

%---------------------------------------------------------------------
  \title[An HDG method for the dual-porosity-Stokes problem]{A hybridizable discontinuous Galerkin method for the
    dual-porosity-Stokes problem}
    
%-------------------------------------------
\subjclass[2020]{Primary: 65N12, 65N15, 65N30, 76D07, 76S99}

%---------------------------------------------------------------------
  \begin{abstract}
    We introduce and analyze a hybridizable discontinuous Galerkin
    (HDG) method for the dual-porosity-Stokes problem. This coupled
    problem describes the interaction between free flow in
    macrofractures/conduits, governed by the Stokes equations, and
    flow in microfractures/matrix, governed by a dual-porosity
    model. We prove that the HDG method is strongly conservative,
    well-posed, and give an a priori error analysis showing dependence
    on the problem parameters. Our theoretical findings are
    corroborated by numerical examples.
  \end{abstract}
%---------------------------------------------------------------------
 \keywords{ Hybridizable discontinuous Galerkin, dual-porosity model, Stokes equations, coupled problem}
\date{August, 2023}
\maketitle

%---------------------------------------------------------------------
\section{Introduction}
\label{sec:intro}

The interaction between porous media flow and free flow arises from
different flow problems in engineering such as industrial filtration,
groundwater discharge, and petroleum and gas extraction (see, for
example,
\cite{Arbogast:2007,Arbogast:2009,Badea:2010,Cesmelioglu:2009,Discacciati:2004}). These
problems are typically modeled by the coupled Stokes--Darcy equations
and many numerical methods have been designed for this model, see for
example,
\cite{Cesmelioglu:2020,Kanschat:2010,RiviereYotov:2005,Layton:2003,Burman:2007,Mu:2007,Discacciati:2009,Cao:2010}
and references therein. However, the coupled Stokes--Darcy model does
not account for the heterogeneous nature of a porous medium, which in
practice may contain multiple porosities. To address this, Hou et
al. \cite{Hou:20162} introduce the dual-porosity-Stokes model. In
these equations, flow in microfractures and the matrix are modelled by
a dual-porosity model \cite{Warren:1963}, while mass conservation,
force balance, the Beavers--Joseph--Saffman condition
\cite{Beavers:1967,Saffman:1971}, and a no-exchange condition, are
imposed on the interface between the free flow and porous media flow
domains. The first of these three interface conditions also appear in the
coupled Stokes--Darcy model while the no-exchange condition is
specific to the dual-porosity-Stokes model; it prescribes that fluid
in the matrix can flow into the microfractures, but not into the
conduits directly.

A weak formulation of the time-dependent dual-porosity-Stokes model is
presented by Hou et al. \cite{Hou:20162}.  They furthermore show that
the model is well-posed and propose and analyze a monolithic finite
element method for the model. Al Mahbub et
al. \cite{Mahbub:2019b,Mahbub:2019} introduce and analyze stabilized
mixed finite element methods for the time-dependent and stationary
cases, respectively.  More recently, Wen et al. \cite{WEN:2022}
introduce and analyze a monolithic and strongly conservative scheme
for the stationary dual-porosity-Stokes problem based on symmetric
interior penalty discontinuous Galerkin and mixed finite
element methods, while Qiu et al. \cite{Qiu:2023} present and analyze a weak formulation for the stationary dual-porosity-Navier--Stokes
model under a small
data assumption. They also propose and
analyze a corresponding finite element method. Furthermore, for
time-dependent dual-porosity-Stokes models, various decoupled schemes
have been studied, see for example
\cite{Mahbub:2019b,Mahbub:2019,Shan:2019,Gao:2021,Cao:2021-1,Cao:2022,Li:2022}.

Well-posedness of the
weak formulation of the time-dependent dual-porosity-Stokes problem
was proven in \cite{Hou:2016} using a G\r{a}rding-type
inequality. In this paper we follow a different approach. We
consider the weak formulation presented in \cite{WEN:2022} for the
stationary problem in mixed form, and show well-posedness using
saddle point theory. We then propose a monolithic hybridizable
discontinuous Galerkin (HDG) method for the dual-porosity-Stokes
problem. This HDG method couples a pressure-robust IP-HDG method for
Stokes \cite{Rhebergen:2017} to a hybridized BDM discretization
\cite{Arnold:1985,Boffi:book,Du:book} of the dual-porosity
problem. Let us remark that, in the absence of source terms, our
discretization is strongly conservative, i.e., the velocity field is
pointwise divergence-free and divergence-conforming
\cite{Kanschat:2010}. Furthermore, for higher-order accurate
approximations, hybridizable DG/BDM methods typically have much less
globally coupled degrees-of-freedom compared to usual DG methods on
the same mesh \cite{Cockburn:2009a}.

The remainder of this paper is organized as follows. We introduce the
dual-porosity-Stokes model in \Cref{sec:model}, and prove
well-posedness of the weak formulation of this model in \Cref{sec:weak
  form}.  We present and show well-posedness of our strongly
conservative HDG method for the dual-porosity-Stokes problem in
\Cref{sec:hdg}. An a priori error analysis of the discretization,
showing explicit dependence on problem parameters, is presented in
\Cref{sec:error}, while numerical examples are presented in
\Cref{sec:Numerical ex}. We conclude in \Cref{sec:Conclusions}.

%---------------------------------------------------------------------
\section{The dual-porosity-Stokes model}
\label{sec:model}

Let $\Omega \subset \mathbb{R}^{\text{dim}}$, $\text{dim}=2,3$, be a
domain with Lipschitz boundary $\partial\Omega$ and let $\Omega^s$ be
a \emph{free flow} domain and $\Omega^d$ a \emph{porous medium}
domain. The free flow and porous medium domains are nonoverlapping,
i.e., $\Omega^s \cap \Omega^d = \emptyset$, and are such that
$\overline{\Omega} = \overline{\Omega}^s \cup
\overline{\Omega}^d$. Let
$\Gamma^I=\overline{\Omega}^s\cap \overline{\Omega}^d$,
$\Gamma^s=\partial\Omega^s\cap \partial\Omega$, and
$\Gamma^d=\partial\Omega^d\cap \partial\Omega$. We denote by $n^j$ the
outward pointing unit normal vector of $\Omega^j$ for $j=s,d$. The
unit normal vector on $\Gamma^I$ is denoted by $n$ and coincides with
$n^s = -n^d$. See \Cref{fig:domain} for an illustration of a two
dimensional domain $\Omega$.

\begin{figure}[tbp]
  \centering
  \begin{tikzpicture}[scale=1.8]
    \draw [color=black,fill = purple!30!pink](0,0) rectangle (2,1);
    \draw [thin,color=black] (0,1)--(2,1)
    node[ near start,above ]{$\Gamma^I$};
    \draw [ultra thin,color=black] (0,0)--(2,0)
    node[midway,below]{$\Gamma^d$};
    \draw [ultra thin,color=black] (0,2)--(2,2) node[midway,above]{$\Gamma^s$};
    \draw[->] (1.5,1)--(1.5,0.75) node[midway,below,right]{$n$};
    \draw [color=black] (0,1)--(0,2)--(2,2)--(2,1);
    \node at (1,0.5) {$\Omega^d$};
    \node at (1,1.5) {$\Omega^s$};
  \end{tikzpicture}
  \caption{Illustration of a free flow/porous medium domain $\Omega$
    in two dimensions.}
  \label{fig:domain}
\end{figure}
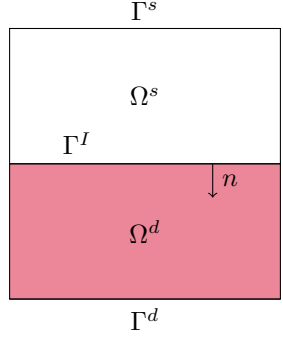

Given the kinematic viscosity $\mu$ and source term $f$, the free flow
fluid velocity $u$ and fluid pressure $p$ in $\Omega^s$ satisfy the
Stokes equations:
\begin{subequations}
  \label{eq:Stokes}
  \begin{align}
    \label{eq:Stokes-a}
    -\nabla\cdot(2\mu\epsilon(u))+\nabla p &=f && \text{ in } \Omega^s,
    \\ 
    \label{eq:Stokes-b}
    \nabla \cdot u & =0 && \text{ in } \Omega^s,
    \\ 
    \label{eq:Stokes-c}
    u & =0 && \text{ on } \Gamma^s,
  \end{align}
\end{subequations}
where $\epsilon(u)=(\nabla u+(\nabla u)^T)/2$ is the strain rate
tensor.

In $\Omega^d$, the matrix pressure $p^m$, matrix velocity $u^m$,
pressure in the microfractures $p$, and velocity in the microfractures
$u$ satisfy the dual-porosity model:
\begin{subequations}
  \label{eq:dualporosity}
  \begin{align}
    \label{eq:dualporosity-a}
    \kappa_f^{-1}u+\nabla p&=0  &&\text{ in } \Omega^d,
    \\
    \label{eq:dualporosity-b}
    \nabla \cdot u+\sigma\kappa_m(p-p^m)&=g  &&\text{ in } \Omega^d,
    \\ 
    \label{eq:dualporosity-c}
    \kappa_m^{-1}u^m+\nabla p^m&=0  &&\text{ in } \Omega^d,
    \\
    \label{eq:dualporosity-d}
    \nabla \cdot u^m+\sigma\kappa_m(p^m-p)&=0  &&\text{ in } \Omega^d,
    \\ 
    \label{eq:dualporosity-e}
    u^m\cdot n&=0  && \text{ on } \Gamma^d,
    \\ 
    \label{eq:dualporosity-f}
    u\cdot n&=0  && \text{ on } \Gamma^d, 
  \end{align}
\end{subequations}
where $\kappa_m$ and $\kappa_f$ are the intrinsic permeabilities in
the matrix and microfractures, respectively, and $g$ is a source
term. Furthermore, the shape factor $\sigma$ is a geometrical
parameter related to the morphology and dimension of the
microfractures that controls the fluid flow from the matrix to the
microfractures \cite{Warren:1963}. We assume that $0 < \sigma \leq \sigma^*$ for some constant $\sigma^*>0$.

The interface conditions, that couple the Stokes model and the
dual-porosity model, are given by:
\begin{subequations}
  \label{eq:IC}
  \begin{align}
    \label{eq:IC-a}
    u^m\cdot n&=0 &&\text{ on } \Gamma^I,
    \\
    \label{eq:IC-b}
    u^s\cdot n-u^d\cdot n&=0 &&\text{ on } \Gamma^{I},
    \\
    \label{eq:IC-c}
    -2\mu\epsilon(u^s)n\cdot n+p^s&= p^d&&\text{ on } \Gamma^{I}, 
    \\
    \label{eq:IC-d}
    -2\mu(\epsilon(u^s)n)^t &=\alpha \mu\kappa_f^{-1/2}(u^s)^t &&\text{ on } \Gamma^{I},
  \end{align}
\end{subequations}
where $u^j:=u|_{\Omega^j}$, $p^j:=p|_{\Omega^j}$ for $j=s,d$, $\alpha$
is a constant, and $w^t=w-(w\cdot n)n$ is the tangential component of
a vector $w$. The first interface condition \cref{eq:IC-a} describes
that there is no flow across the interface from the matrix to the
conduits. The remaining interface conditions describe the exchange
between the conduits/macrofractures and the microfractures and are
similar to those of the Stokes--Darcy model \cite{Layton:2003}.
Specifically, \cref{eq:IC-b} denotes mass conservation and
\cref{eq:IC-c} denotes the balance of forces between the
microfractures and the conduits, while \cref{eq:IC-d} is the
Beavers--Joseph--Saffman interface condition
\cite{Beavers:1967,Saffman:1971}.

%----------------------------------------
\section{The weak formulation}
\label{sec:weak form}

In this section, we present the weak formulation for
\cref{eq:Stokes,eq:dualporosity,eq:IC}. We denote the $L^2$-inner
product on a domain $E \subset \mathbb{R}^{\text{dim}}$ by
$(\cdot, \cdot)_E$, and on a $S \subset \mathbb{R}^{\text{dim}-1}$ by
$\langle \cdot, \cdot \rangle_S$. Furthermore, we define the following
standard Hilbert spaces:
\begin{equation*}
  \begin{split}
    H^1_{\Gamma^s}(\Omega^s)&:=\cbr[1]{v\in H^1(\Omega^s):v=0 \text{ on }\Gamma^s},
    \\
    H(\text{div},\Omega)&:=\cbr[1]{v \in [L^2(\Omega)]^{\text{dim}}: \nabla \cdot v \in L^2(\Omega)},
    \\
    H_0(\text{div},\Omega^d)&:=\cbr[1]{v \in H(\text{div},\Omega^d): v\cdot n=0 \text{ on }\partial\Omega^d},
    \\
    L_0^2(\Omega)&:=\cbr[1]{q\in L^2(\Omega): (q, 1)_{\Omega} = 0 },
    \\
    L_0^2(\Omega^d)&:=\cbr[1]{q\in L^2(\Omega^d): (q, 1)_{\Omega^d} = 0}.
  \end{split}
\end{equation*}
Recall that the space $H(\text{div}, \Omega)$ is equipped with the
norm
\begin{equation*}
  \norm[0]{v}_{H(\text{div},\Omega)} := ( \norm[0]{v}_{\Omega}^2 + \norm[0]{\nabla \cdot v}_{\Omega}^2)^{1/2}.
\end{equation*}
Let us next define the following function spaces:
\begin{align*}
  V &:=
      \cbr[1]{v\in H(\text{div},\Omega): v|_{\Omega^s} \in H^1_{\Gamma^s}(\Omega^s), \ v \cdot n =0 \text{ on } \Gamma^d},
  &
    V^m&:=H_0(\text{div}, \Omega^d),
  \\
  Q &:= L^2_0(\Omega),
  &
    Q^m&:=L^2_0(\Omega^d).
\end{align*}
 
To shorten notation, we define $\boldsymbol{u}:=(u,u^m)$ as an element
of $\boldsymbol{Z}:=V \times V^m$ and $\boldsymbol{p}:=(p,p^m)$ as an
element of $\boldsymbol{P}:=Q\times Q^m$. We then obtain the following
weak formulation of the dual-porosity-Stokes problem
\cref{eq:Stokes,eq:dualporosity,eq:IC} by a standard process of
testing the equations with $v \in V$, $q\in Q$, $v^m \in V^m$, and
$q^m\in Q^m$, using integration by parts, and applying boundary and
interface conditions: Find
$(\boldsymbol{u}, \boldsymbol{p}) \in \boldsymbol{Z} \times
\boldsymbol{P}$ such that
\begin{subequations}
  \label{eq:mixedform}
  \begin{align}
    \label{eq:mixedform-a}
    a(\boldsymbol{u},\boldsymbol{v})+b(\boldsymbol{v},\boldsymbol{p})&=(f,v)_{\Omega^s} && \forall \boldsymbol{v} \in \boldsymbol{Z},
    \\
    \label{eq:mixedform-b}
    b(\boldsymbol{u},\boldsymbol{q})-c(\boldsymbol{p},\boldsymbol{q})&=-(g,q)_{\Omega^d} &&\forall \boldsymbol{q} \in \boldsymbol{P},
  \end{align}
\end{subequations}
where the bilinear forms are defined as:
\begin{subequations}
  \begin{align}
    \label{eq:bilinearform-6a}
    a(\boldsymbol{u},\boldsymbol{v})
    :=& (2\mu\epsilon(u),\epsilon(v))_{\Omega^s}+\kappa_f^{-1}(u, v)_{\Omega^d}+\kappa_m^{-1}(u^m, v^m )_{\Omega^d}
    \\ \nonumber
      &+ \langle \alpha\mu\kappa_f^{-1/2} (u^s)^t, (v^s)^t\rangle_{\Gamma^I},        
    \\
    \label{eq:bilinearform-6b}
    b(\boldsymbol{u},\boldsymbol{q})
    :=& -(q,\nabla\cdot u)_{\Omega} -(q^m,\nabla\cdot u^m )_{\Omega^d},
    \\
    \label{eq:bilinearform-6c}
    c(\boldsymbol{p},\boldsymbol{q})
    :=&\sigma\kappa_m(p-p^m,q-q^m)_{\Omega^d}.
  \end{align}
\end{subequations}

Classical saddle point theory will be used to prove well-posedness of
\cref{eq:mixedform}, i.e., we show that $a(\cdot,\cdot)$,
$b(\cdot,\cdot)$, and $c(\cdot, \cdot)$ satisfy the conditions of
\cite[Theorem 4.3.1]{Boffi:book}. For this, we first define the
following norms on the velocity space $\boldsymbol{Z}$ and pressure
space $\boldsymbol{P}$:
\begin{equation*}
  \begin{split}
    \norm[0]{\boldsymbol{u}}_Z
    &:=\del[1]{\norm[0]{\epsilon(u^s)}^2_{\Omega^s} + \norm[0]{u^d}^2_{H(\text{div}, \Omega^d)}
      +\norm[0]{u^m}^2_{H(\text{div},\Omega^d)} +\norm[0]{(u^s)^t}_{\Gamma^I}^2}^{1/2},
    \\
    \norm[0]{\boldsymbol{p}}_P
    &:= \del[0]{\norm[0]{p}^2_{\Omega} + \norm[0]{p^m}^2_{\Omega^d}}^{1/2}.  
  \end{split}
\end{equation*}
The following lemma proves an inf-sup condition.

\begin{lemma}
  \label{lem:b-infsup}
  There exists a constant $C > 0$, depending only on $\Omega$, such
  that
  \begin{equation*}
    \forall \boldsymbol{q} \in \boldsymbol{P}, \qquad
    C \norm[0]{\boldsymbol{q}}_P
    \leq
    \sup_{\boldsymbol{v}\in \boldsymbol{Z}\backslash \cbr[0]{(0,0)}}
    \frac{b(\boldsymbol{v},\boldsymbol{q})}{\norm[0]{\boldsymbol{v}}_Z}.
  \end{equation*}
\end{lemma}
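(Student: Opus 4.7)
The plan is to exploit the standard surjectivity of the divergence from $H^1_0$ onto $L^2_0$, applied separately on the global domain $\Omega$ (to handle $q$) and on the subdomain $\Omega^d$ (to handle $q^m$), and then verify that the resulting pair $\boldsymbol{v}$ lies in $\boldsymbol{Z}$ with controlled norm.

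\medskip

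\noindent\textbf{Step 1: Construct $v$ from $q$.} Given $\boldsymbol{q} = (q, q^m) \in \boldsymbol{P}$, since $q \in L_0^2(\Omega)$ and $\Omega$ is a Lipschitz domain, a classical result (see e.g.\ \cite{Boffi:book}) provides a function $v \in [H_0^1(\Omega)]^{\dim}$ with $\nabla \cdot v = -q$ in $\Omega$ and $\|v\|_{H^1(\Omega)} \leq C_\Omega \|q\|_\Omega$. I would then check that $v \in V$: the vanishing trace on $\partial\Omega$ yields $v|_{\Omega^s} = 0$ on $\Gamma^s$ and $v \cdot n = 0$ on $\Gamma^d$, while $[H_0^1(\Omega)]^{\dim} \subset H(\text{div},\Omega)$ (in particular the normal component is continuous across $\Gamma^I$).

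\medskip

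\noindent\textbf{Step 2: Construct $v^m$ from $q^m$.} Similarly, since $q^m \in L_0^2(\Omega^d)$, pick $v^m \in [H_0^1(\Omega^d)]^{\dim} \subset H_0(\text{div},\Omega^d) = V^m$ with $\nabla \cdot v^m = -q^m$ in $\Omega^d$ and $\|v^m\|_{H(\text{div},\Omega^d)} \leq C_{\Omega^d} \|q^m\|_{\Omega^d}$. Set $\boldsymbol{v} := (v, v^m) \in \boldsymbol{Z}$.

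\medskip

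\noindent\textbf{Step 3: Bound $\|\boldsymbol{v}\|_Z$.} The four pieces of the $Z$-norm are controlled directly: $\|\epsilon(v^s)\|_{\Omega^s} \leq \|\nabla v\|_{\Omega^s}$, $\|v^d\|_{H(\text{div},\Omega^d)} \leq \|v\|_{H^1(\Omega^d)}$, and the tangential trace $\|(v^s)^t\|_{\Gamma^I} \leq C\|v\|_{H^1(\Omega^s)}$ by the standard trace inequality. Combining with Step 1 and Step 2 gives
\begin{equation*}
\|\boldsymbol{v}\|_Z \leq C \bigl( \|q\|_\Omega + \|q^m\|_{\Omega^d} \bigr) \leq C\sqrt{2}\,\|\boldsymbol{q}\|_P.
\end{equation*}

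\medskip

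\noindent\textbf{Step 4: Evaluate $b(\boldsymbol{v},\boldsymbol{q})$ and conclude.} Using the divergence identities from Steps 1--2,
\begin{equation*}
b(\boldsymbol{v},\boldsymbol{q}) = -(q, \nabla \cdot v)_\Omega - (q^m, \nabla \cdot v^m)_{\Omega^d} = \|q\|_\Omega^2 + \|q^m\|_{\Omega^d}^2 = \|\boldsymbol{q}\|_P^2.
\end{equation*}
Dividing by $\|\boldsymbol{v}\|_Z$ yields the desired bound with a constant depending only on $\Omega$.

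\medskip

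The main (and really only) obstacle is conceptual rather than technical: one must be certain that the global $H_0^1$ right inverse of the divergence delivers a function that respects the mixed regularity of $V$ (namely $H^1$ on $\Omega^s$ plus $H(\text{div})$ on $\Omega^d$ with matching normal trace on $\Gamma^I$). Since $[H_0^1(\Omega)]^{\dim}$ embeds into $V$ trivially, this is painless; the rest is routine bookkeeping of norms.
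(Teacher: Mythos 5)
Your proof is correct and follows essentially the same route as the paper: construct $v \in [H_0^1(\Omega)]^{\mathrm{dim}}$ and $v^m$ as right inverses of the divergence for $-q$ and $-q^m$, bound $\|\boldsymbol{v}\|_Z$ by $\|\boldsymbol{q}\|_P$ via the trace inequality, and evaluate $b(\boldsymbol{v},\boldsymbol{q}) = \|\boldsymbol{q}\|_P^2$. The only (harmless) difference is that you take $v^m \in [H_0^1(\Omega^d)]^{\mathrm{dim}}$ whereas the paper invokes the inf-sup condition for the pair $H_0(\mathrm{div},\Omega^d)$--$L_0^2(\Omega^d)$ directly; since $[H_0^1(\Omega^d)]^{\mathrm{dim}} \subset H_0(\mathrm{div},\Omega^d)$ with the $H(\mathrm{div})$-norm controlled by the $H^1$-norm, both choices yield the same conclusion.
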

\begin{proof}
  Let $\boldsymbol{q} = (q, q^m) \in \boldsymbol{P}$. Since
  $q\in L^2_0(\Omega)$, by the standard inf-sup condition
  \cite[p.463]{Boffi:book}, there exists $v \in H^1_0(\Omega)$ such
  that $\nabla\cdot v=-q$ in $\Omega$ and
  $\norm[0]{v}_{1,\Omega}\leq c \norm[0]{q}_{\Omega}$. Furthermore,
  since $q^m \in L^2_0(\Omega^d)$, by the inf-sup condition for
  $H_0(\text{div}, \Omega^d)$ and $L_0^2(\Omega^d)$
  \cite[p.117-118]{Ern:book}, there exists
  $v^m \in H_0(\text{div}, \Omega^d)$ such that
  $\nabla \cdot v^m = -q^m$ in $\Omega^d$ and
  $\norm[0]{v^m}_{H(\text{div}, \Omega^d)} \leq C
  \norm[0]{q^m}_{\Omega^d}$. Using the trace inequality
  \cite[(1.24)]{Girault:2009}, we find:
  \begin{align*}
    \norm[0]{\boldsymbol{v}}_Z
    = &\del[1]{\norm[0]{\epsilon(v^s)}^2_{\Omega^s} + \norm[0]{v^d}_{H(\text{div}, \Omega^d)}
      + \norm[0]{v^m}^2_{H(\text{div},\Omega^d)} + \norm[0]{(v^s)^t}_{\Gamma^I}^2}^{\frac{1}{2}}
    \\
    \leq &C \del[1]{ \norm[0]{q}^2_{\Omega} + \norm[0]{q^m}^2_{\Omega^d} }^{\frac{1}{2}} = C \norm[0]{\boldsymbol{q}}_P.
  \end{align*}
  Combining the aforementioned results,
  \begin{equation*}
    \frac{b(\boldsymbol{v},\boldsymbol{q})}{\norm[0]{\boldsymbol{v}}_Z}
    = \frac{\norm[0]{\boldsymbol{q}}_P^2}{\norm[0]{\boldsymbol{v}}_Z}
    \geq C\norm[0]{\boldsymbol{q}}_P,
  \end{equation*}
  so that the result follows.
\end{proof}

Let us introduce the operator
$B:\boldsymbol{Z} \mapsto \boldsymbol{P}$ such that
\begin{equation*}
  \del[0]{B\boldsymbol{u}, \boldsymbol{p}}
  = b(\boldsymbol{u}, \boldsymbol{p})
  \quad \forall
  (\boldsymbol{u}, \boldsymbol{p}) \in \boldsymbol{Z} \times \boldsymbol{P}.
\end{equation*}

\begin{lemma}
  \label{lem:a-symcont}
  The bilinear form $a(\cdot,\cdot)$ given in
  \cref{eq:bilinearform-6a} is symmetric and continuous on
  $\boldsymbol{Z} \times \boldsymbol{Z}$, and coercive on
  $\mathrm{Ker}\, B$, i.e.,
  \begin{align*}
    |a(\boldsymbol{u},\boldsymbol{v})|
    &\leq \tilde{C}_b\norm[0]{\boldsymbol{u}}_{Z} \norm[0]{\boldsymbol{v}}_{Z}
    && \forall \boldsymbol{u},\boldsymbol{v} \in \boldsymbol{Z},
    \\   
    a(\boldsymbol{u},\boldsymbol{u})
    &\geq \tilde{C}_c \norm[0]{\boldsymbol{u}}_{Z}^2
    && \forall \boldsymbol{u}\in \mathrm{Ker}\, B,
  \end{align*}
  where
  $\tilde{C}_{b}=\max\cbr[0]{2\mu,\kappa_f^{-1},\alpha\mu\kappa_f^{-1/2},\kappa_m^{-1}}$
  and
  $\tilde{C}_c=\min\cbr[0]{2\mu,\kappa_{f}^{-1},\alpha\mu\kappa_f^{-1/2},\kappa_m^{-1}}$.
\end{lemma}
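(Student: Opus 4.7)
The lemma makes three claims, and I would address them in increasing order of difficulty.

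For \emph{symmetry}, I would just observe it term by term from \cref{eq:bilinearform-6a}: the volume contributions $(2\mu\epsilon(u),\epsilon(v))_{\Omega^s}$, $\kappa_f^{-1}(u,v)_{\Omega^d}$, and $\kappa_m^{-1}(u^m,v^m)_{\Omega^d}$ are $L^2$-inner products (up to positive constants), and the interface term is an $L^2(\Gamma^I)$-inner product in the tangential traces. So $a(\boldsymbol{u},\boldsymbol{v})=a(\boldsymbol{v},\boldsymbol{u})$ is immediate.

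For \emph{continuity}, the plan is to apply Cauchy--Schwarz to each of the four terms separately, using that each term's coefficient is bounded above by $\tilde{C}_b$, and that each factor on the right-hand side is controlled by the corresponding component of $\|\cdot\|_Z$ (note in particular that $\|u^d\|_{\Omega^d}\le \|u^d\|_{H(\mathrm{div},\Omega^d)}$ and similarly for $u^m$, and that the interface term already appears in the $Z$-norm). Summing the four bounds and applying Cauchy--Schwarz in $\mathbb{R}^4$ then yields $|a(\boldsymbol{u},\boldsymbol{v})|\le \tilde{C}_b\|\boldsymbol{u}\|_Z\|\boldsymbol{v}\|_Z$.

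The \emph{coercivity on $\mathrm{Ker}\,B$} is the main step, and the obstacle is that the $Z$-norm contains the full $H(\mathrm{div})$-norms of $u^d$ and $u^m$, whereas $a(\boldsymbol{u},\boldsymbol{u})$ only controls their $L^2$-parts. I would resolve this by first characterizing $\mathrm{Ker}\,B$. Taking $\boldsymbol{q}=(q,0)$ with $q\in L^2_0(\Omega)$ in the condition $b(\boldsymbol{u},\boldsymbol{q})=0$ shows $\nabla\cdot u$ is $L^2(\Omega)$-orthogonal to $L^2_0(\Omega)$, hence constant on $\Omega$; taking $\boldsymbol{q}=(0,q^m)$ shows $\nabla\cdot u^m$ is constant on $\Omega^d$. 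Then I would fix these constants using the boundary conditions encoded in $\boldsymbol{Z}$: for $u\in V$ the global divergence theorem gives
\begin{equation*}
  |\Omega|\,(\nabla\cdot u) = \int_{\partial\Omega} u\cdot n\,ds = 0,
\end{equation*}
because $u=0$ on $\Gamma^s$ and $u\cdot n=0$ on $\Gamma^d$; and for $u^m\in H_0(\mathrm{div},\Omega^d)$ the same argument on $\Omega^d$ yields $\nabla\cdot u^m = 0$. Consequently on $\mathrm{Ker}\,B$ the $H(\mathrm{div})$-norms collapse to $L^2$-norms, and
\begin{equation*}
  a(\boldsymbol{u},\boldsymbol{u})
  \ge \tilde{C}_c\bigl(\|\epsilon(u^s)\|_{\Omega^s}^2 + \|u^d\|_{\Omega^d}^2 + \|u^m\|_{\Omega^d}^2 + \|(u^s)^t\|_{\Gamma^I}^2\bigr)
  = \tilde{C}_c\|\boldsymbol{u}\|_Z^2,
\end{equation*}
with $\tilde{C}_c$ as stated. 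Identifying $\mathrm{Ker}\,B$ with the divergence-free subspace via the boundary/interface data is the only nontrivial input; once that is in hand the estimate is purely arithmetic.
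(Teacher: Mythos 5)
Your proposal is correct and follows essentially the same route as the paper: termwise Cauchy--Schwarz followed by Cauchy--Schwarz in $\mathbb{R}^4$ for continuity, and for coercivity the observation that on $\mathrm{Ker}\,B$ the divergence contributions to $\|\cdot\|_Z$ vanish, so that $a(\boldsymbol{u},\boldsymbol{u})$ controls the full $Z$-norm. In fact your explicit characterization of $\mathrm{Ker}\,B$ (divergences constant by orthogonality to $L^2_0(\Omega)$ and $L^2_0(\Omega^d)$, then zero by the divergence theorem together with the boundary conditions built into $V$ and $V^m$) supplies the justification that the paper's proof leaves implicit when it passes from the lower bound on $a(\boldsymbol{u},\boldsymbol{u})$ to the equality $\tilde{C}_c\|\boldsymbol{u}\|_Z^2$.
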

\begin{proof}
  Continuity follows by using the Cauchy--Schwarz inequality twice on
  the definition of $a(\cdot, \cdot)$ (see \cref{eq:bilinearform-6a})
  \begin{align*}
    |a(\boldsymbol{u},\boldsymbol{v})| 
    \leq
    &2\mu\norm[0]{\epsilon(u)}_{\Omega^s}\norm[0]{\epsilon(v)}_{\Omega^s}
      + \kappa_f^{-1}\norm[0]{u}_{\Omega^d}\norm[0]{v}_{\Omega^d}
      + \kappa_m^{-1}\norm[0]{u^m}_{\Omega^d}\norm[0]{v^m}_{\Omega^d}
    \\
      &+ \alpha\mu\kappa_f^{-1/2} \norm[0]{(u^s)^t}_{\Gamma^I} \norm[0]{(v^s)^t}_{\Gamma^I}
    \\
    \leq
    & (2\mu\norm[0]{\epsilon(u)}^2_{\Omega^s}
      + \kappa_f^{-1}\norm[0]{u}_{\Omega^d}^2
      + \alpha\mu\kappa_f^{-1/2} \norm[0]{(u^s)^t}_{\Gamma^I}^2
      + \kappa_m^{-1}\norm[0]{u^m}_{\Omega^d}^2)
    \\
    & \cdot (2\mu\norm[0]{\epsilon(v)}^2_{\Omega^s}
      + \kappa_f^{-1}\norm[0]{v}_{\Omega^d}^2
      + \alpha\mu\kappa_f^{-1/2}\norm[0]{(v^s)^t}_{\Gamma^I}^2
      + \kappa_m^{-1}\norm[0]{v^m}_{\Omega^d}^2)
    \\
    \leq
    & \tilde{C}_{b} \norm[0]{\boldsymbol{u}}_{Z}\norm[0]{\boldsymbol{v}}_{Z}.
  \end{align*}
  Coercivity of $a(\cdot, \cdot)$ on $\text{Ker}\, B$ follows since
  for $\boldsymbol{u} \in \text{Ker}\, B$,
  \begin{align*}
    a(\boldsymbol{u},\boldsymbol{u})
    &=2\mu\norm[0]{\epsilon(u)}_{\Omega^s}^2 + \kappa_f^{-1}\norm[0]{u}_{\Omega^d}^2 + \kappa_m^{-1}\norm[0]{u^m}_{\Omega^d}^2 
      +\alpha\mu\kappa_f^{-1/2}  \norm[0]{(u^s)^t}_{\Gamma^I}^2
    \\
    & \geq
      \tilde{C}_c\del[0]{
      \norm[0]{\epsilon(u)}_{\Omega^s}^2 + \norm[0]{u}_{\Omega^d}^2
      + \norm[0]{u^m}_{\Omega^d}^2 + \norm[0]{(u^s)^t}_{\Gamma^I}^2}
      = \tilde{C}_c \norm[0]{\boldsymbol{u}}_Z^2.
  \end{align*}
\end{proof}

\begin{lemma}
  \label{lem:b-cont}
  The bilinear form $b(\cdot,\cdot)$, defined in
  \cref{eq:bilinearform-6b}, is continuous on
  $\boldsymbol{Z} \times \boldsymbol{P}$, that is,
  \begin{equation*}
    |b(\boldsymbol{u},\boldsymbol{q})| 
    \leq \norm[0]{\boldsymbol{u}}_Z \norm[0]{\boldsymbol{q}}_P
    \quad \forall (\boldsymbol{u},\boldsymbol{q}) \in \boldsymbol{Z} \times  \boldsymbol{P}.
  \end{equation*}
\end{lemma}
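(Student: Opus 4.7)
The plan is a direct application of the Cauchy--Schwarz inequality, splitting $b(\boldsymbol{u},\boldsymbol{q})$ into the three $L^2$-pairings into which it naturally decomposes and bounding each against the appropriate pieces of $\|\boldsymbol{u}\|_Z$ and $\|\boldsymbol{q}\|_P$.

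First I would split the global pairing on $\Omega$ across the two subdomains:
\[
b(\boldsymbol{u},\boldsymbol{q}) = -(q,\nabla\cdot u^s)_{\Omega^s} - (q,\nabla\cdot u^d)_{\Omega^d} - (q^m,\nabla\cdot u^m)_{\Omega^d},
\]
which is valid since $u^s\in H^1_{\Gamma^s}(\Omega^s)$ and $u^d,u^m\in H(\text{div},\Omega^d)$ all have divergences in $L^2$. Applying Cauchy--Schwarz to each term gives
\[
|b(\boldsymbol{u},\boldsymbol{q})| \le \|\nabla\cdot u^s\|_{\Omega^s}\|q\|_{\Omega^s} + \|\nabla\cdot u^d\|_{\Omega^d}\|q\|_{\Omega^d} + \|\nabla\cdot u^m\|_{\Omega^d}\|q^m\|_{\Omega^d}.
\]
The two $\Omega^d$-divergence norms are immediately controlled by $\|u^d\|_{H(\text{div},\Omega^d)}$ and $\|u^m\|_{H(\text{div},\Omega^d)}$, which appear directly in $\|\boldsymbol{u}\|_Z$. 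For the $\Omega^s$-term, since $\nabla\cdot u^s = \operatorname{tr}(\epsilon(u^s))$ pointwise and $|\operatorname{tr}M|\le \sqrt{\text{dim}}\,|M|$ for any symmetric matrix $M$, I would bound $\|\nabla\cdot u^s\|_{\Omega^s} \le \sqrt{\text{dim}}\,\|\epsilon(u^s)\|_{\Omega^s}$. A final discrete Cauchy--Schwarz in $\mathbb{R}^3$ applied to the two resulting triples of scalars then collapses the three-term sum to the product $\|\boldsymbol{u}\|_Z\,\|\boldsymbol{q}\|_P$, up to the dimension-dependent factor just noted (which the statement absorbs into its constant).

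There is no real obstacle here: the proof is essentially bookkeeping around Cauchy--Schwarz. The only point meriting attention is the mismatch between the divergence $\nabla\cdot u^s$ appearing in $b(\cdot,\cdot)$ and the strain rate $\epsilon(u^s)$ in $\|\boldsymbol{u}\|_Z$, which is resolved by the trace-vs-norm estimate for symmetric matrices and does not require any Korn or trace inequality.
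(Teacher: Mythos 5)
Your proof follows essentially the same route as the paper's: the same splitting of $b(\cdot,\cdot)$ into three $L^2$ pairings, Cauchy--Schwarz on each, absorption of the two $\Omega^d$ divergence norms into the $H(\text{div},\Omega^d)$ norms, and a final discrete Cauchy--Schwarz to assemble $\norm[0]{\boldsymbol{u}}_Z\norm[0]{\boldsymbol{q}}_P$. The one step where you diverge is the comparison of $\norm[0]{\nabla\cdot u^s}_{\Omega^s}$ with $\norm[0]{\epsilon(u^s)}_{\Omega^s}$: the paper asserts this with constant $1$, citing Korn's inequality, whereas you use the pointwise bound $|\operatorname{tr}M|\le\sqrt{\text{dim}}\,|M|$ applied to $\nabla\cdot u^s=\operatorname{tr}(\epsilon(u^s))$, which yields the factor $\sqrt{\text{dim}}$. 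Your handling is arguably the more defensible one --- Korn's inequality concerns $\norm[0]{\nabla u}\le C\norm[0]{\epsilon(u)}$ and carries a domain-dependent constant, so it cannot by itself deliver the constant $1$ that the paper writes --- but there is one inaccuracy in your closing remark: the lemma as stated has no generic constant to absorb anything into; it asserts $|b(\boldsymbol{u},\boldsymbol{q})|\le\norm[0]{\boldsymbol{u}}_Z\norm[0]{\boldsymbol{q}}_P$ with constant exactly $1$. What you actually prove is $|b(\boldsymbol{u},\boldsymbol{q})|\le\sqrt{\text{dim}}\,\norm[0]{\boldsymbol{u}}_Z\norm[0]{\boldsymbol{q}}_P$, which is nominally weaker than the displayed inequality. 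This discrepancy is harmless in context (the paper's own constant-$1$ step is at least as questionable, and the subsequent well-posedness theorem only needs boundedness with some constant), but you should either state the lemma with the explicit constant $\sqrt{\text{dim}}$ or explain why the sharper constant holds, rather than claim the statement accommodates it.
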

\begin{proof}
  By the Cauchy--Schwarz and Korn's inequalities,
  \begin{align*}
    &|b(\boldsymbol{u},\boldsymbol{q})|
    \leq \norm[0]{q^s}_{\Omega^s} \norm[0]{\nabla \cdot u^s}_{\Omega^s}
      + \norm[0]{q^d}_{\Omega^d}\norm[0]{\nabla \cdot u^d}_{\Omega^d}
      + \norm[0]{q^m}_{\Omega^d}\norm[0]{\nabla \cdot u^m}_{\Omega^d}
    \\
    &\leq \norm[0]{q^s}_{\Omega^s}\norm[0]{\epsilon(u^s)}_{\Omega^s}
      + \norm[0]{q^d}_{\Omega^d}\norm[0]{u^d}_{H(\text{div},\Omega^d)}
      + \norm[0]{q^m}_{\Omega^d}\norm[0]{u^m}_{H(\text{div},\Omega^d)}
    \\
    &\leq (\norm[0]{q^s}_{\Omega^s}^2 + \norm[0]{q^d}^2_{\Omega^d}
      + \norm[0]{q^m}^2_{\Omega^d})^{\frac12} (\norm[0]{\epsilon(u^s)}_{\Omega^s}^2
      + \norm[0]{u^d}_{H(\text{div}, \Omega^d)}^2 + \norm[0]{u^m}_{H(\text{div},\Omega^d)}^2)^{\frac12}
    \\
    &\leq \norm[0]{\boldsymbol{q}}_P \norm[0]{\boldsymbol{u}}_Z.
  \end{align*}
\end{proof}

\begin{lemma}
  \label{lem:c-scsd}
  The bilinear form $c(\cdot, \cdot)$, given in
  \cref{eq:bilinearform-6c}, is symmetric, continuous, and positive
  semidefinite on 
    $\boldsymbol{P} \times \boldsymbol{P}$.
\end{lemma}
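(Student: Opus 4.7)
The statement is essentially an unpacking of the definition of $c$ together with basic inequalities, so I do not expect any real obstacle; the plan is to verify each of the three properties in turn.

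First, symmetry is immediate: swapping $\boldsymbol{p}$ and $\boldsymbol{q}$ in $c(\boldsymbol{p},\boldsymbol{q}) = \sigma\kappa_m(p-p^m, q-q^m)_{\Omega^d}$ only swaps the two arguments of the $L^2$-inner product on $\Omega^d$, which is itself symmetric.

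For continuity, I would apply the Cauchy--Schwarz inequality on $L^2(\Omega^d)$ to get
\begin{equation*}
|c(\boldsymbol{p},\boldsymbol{q})| \leq \sigma\kappa_m \, \norm[0]{p-p^m}_{\Omega^d}\norm[0]{q-q^m}_{\Omega^d},
\end{equation*}
then use the triangle inequality together with $\norm[0]{p}_{\Omega^d} \leq \norm[0]{p}_{\Omega}$ and the elementary bound $\norm[0]{p}_{\Omega}+\norm[0]{p^m}_{\Omega^d} \leq \sqrt{2}\,\norm[0]{\boldsymbol{p}}_P$, and likewise for $\boldsymbol{q}$. Combined with the assumption $\sigma \leq \sigma^*$ stated just after \cref{eq:dualporosity}, this yields
\begin{equation*}
|c(\boldsymbol{p},\boldsymbol{q})| \leq 2\sigma^*\kappa_m \, \norm[0]{\boldsymbol{p}}_P \norm[0]{\boldsymbol{q}}_P,
\end{equation*}
which is the desired continuity estimate with constant $2\sigma^*\kappa_m$.

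Finally, for positive semidefiniteness I would simply compute
\begin{equation*}
c(\boldsymbol{p},\boldsymbol{p}) = \sigma\kappa_m \norm[0]{p-p^m}_{\Omega^d}^2 \geq 0,
\end{equation*}
since $\sigma,\kappa_m > 0$. Note that $c$ is not positive definite, as $c(\boldsymbol{p},\boldsymbol{p})=0$ whenever $p=p^m$ on $\Omega^d$; this is why the lemma only claims semidefiniteness, which is all that is needed to invoke \cite[Theorem 4.3.1]{Boffi:book} later.
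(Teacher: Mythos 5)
Your proposal is correct and follows essentially the same route as the paper: Cauchy--Schwarz on the $L^2(\Omega^d)$ inner product, the triangle inequality with the elementary bound $(a+b)(c+d)\leq 2(a^2+b^2)^{1/2}(c^2+d^2)^{1/2}$, the assumption $\sigma\leq\sigma^*$ to reach the constant $2\sigma^*\kappa_m$, and the direct computation $c(\boldsymbol{p},\boldsymbol{p})=\sigma\kappa_m\norm[0]{p-p^m}_{\Omega^d}^2\geq 0$ for semidefiniteness. The only (harmless) addition is your explicit remark that $\norm[0]{p}_{\Omega^d}\leq\norm[0]{p}_{\Omega}$, a step the paper leaves implicit.
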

\begin{proof}
  It is clear from the definition of $c(\cdot, \cdot)$ that this
  bilinear form is symmetric. Continuity of $c(\cdot, \cdot)$ follows
  by using the Cauchy--Schwarz inequality:
  \begin{align*}
    c(\boldsymbol{p},\boldsymbol{q})
    &= \sigma\kappa_m(p-p^m,q-q^m)_{\Omega^d}
     \leq \sigma\kappa_m\norm[0]{p-p^m}_{\Omega^d}\norm[0]{q-q^m}_{\Omega^d}
    \\
    &\leq \sigma\kappa_m(\norm[0]{p}_{\Omega^d} + \norm[0]{p^m}_{\Omega^d})(\norm[0]{q}_{\Omega^d} + \norm[0]{q^m}_{\Omega^d})
    \\
    &\leq 2\sigma\kappa_m(\norm[0]{p}^2_{\Omega^d} + \norm[0]{p^m}^2_{\Omega^d})^{1/2}(\norm[0]{q}^2_{\Omega^d} + \norm[0]{q^m}^2_{\Omega^d})^{1/2}
    \\
    &\leq 2\sigma^*\kappa_m \norm[0]{\boldsymbol{p}}_P \norm[0]{\boldsymbol{q}}_P,
  \end{align*}
  while
  \begin{equation*}
    c(\boldsymbol{p}, \boldsymbol{p})
    =\sigma \kappa_m \norm[0]{p-p^m}^2_{\Omega^d} \geq 0 \quad
    \forall \boldsymbol{p}\in \boldsymbol{P},
  \end{equation*}
  shows that $c(\cdot, \cdot)$ is positive semidefinite.
\end{proof}

We now state the main result of this section.

\begin{theorem}
  Given $f \in [L^2(\Omega^s)]^{\text{dim}}$ and $g\in L^2(\Omega^d)$,
  the weak formulation \cref{eq:mixedform} has a unique
  solution. Moreover,
  \begin{equation*}
    \norm[0]{\boldsymbol{u}}_Z + \norm[0]{\boldsymbol{p}}_{P}
    \leq C \del[1]{\norm[0]{f}_{\Omega^s} + \norm[0]{g}_{\Omega^d}}.
  \end{equation*}
\end{theorem}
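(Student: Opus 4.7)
The plan is to invoke the generalized saddle point theorem \cite[Theorem 4.3.1]{Boffi:book}, which is tailored to exactly the structure of \cref{eq:mixedform} with a positive semidefinite perturbation $c(\cdot,\cdot)$. The preceding four lemmas already deliver every hypothesis of that theorem: \Cref{lem:a-symcont} gives symmetry, continuity on $\boldsymbol{Z}\times\boldsymbol{Z}$, and coercivity on $\mathrm{Ker}\,B$ of $a$; \Cref{lem:b-cont} and \Cref{lem:b-infsup} give continuity and the inf--sup condition for $b$; and \Cref{lem:c-scsd} gives symmetry, continuity, and positive semidefiniteness of $c$. Applying the theorem therefore yields existence and uniqueness of $(\boldsymbol{u},\boldsymbol{p})\in \boldsymbol{Z}\times \boldsymbol{P}$ solving \cref{eq:mixedform}.

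For the a priori estimate, I would regard the right-hand sides of \cref{eq:mixedform-a} and \cref{eq:mixedform-b} as bounded linear functionals on $\boldsymbol{Z}$ and $\boldsymbol{P}$, respectively, and identify their dual norms. For the first, Cauchy--Schwarz combined with Korn's inequality on $H^1_{\Gamma^s}(\Omega^s)$ (which is applicable because elements of $V$ vanish on $\Gamma^s$) gives
\begin{equation*}
  |(f,v)_{\Omega^s}|
  \leq \norm[0]{f}_{\Omega^s}\norm[0]{v^s}_{\Omega^s}
  \leq C\norm[0]{f}_{\Omega^s}\norm[0]{\epsilon(v^s)}_{\Omega^s}
  \leq C\norm[0]{f}_{\Omega^s}\norm[0]{\boldsymbol{v}}_Z,
\end{equation*}
while for the second, Cauchy--Schwarz alone yields
\begin{equation*}
  |(g,q)_{\Omega^d}|
  \leq \norm[0]{g}_{\Omega^d}\norm[0]{q^d}_{\Omega^d}
  \leq \norm[0]{g}_{\Omega^d}\norm[0]{\boldsymbol{q}}_P.
\end{equation*}
Feeding these dual-norm bounds into the continuous-dependence inequality furnished by \cite[Theorem 4.3.1]{Boffi:book} produces the stated estimate, with $C$ depending on $\Omega$ together with the inf-sup, continuity, and coercivity constants from the preceding lemmas (and hence on $\mu$, $\kappa_f$, $\kappa_m$, $\alpha$, and $\sigma^*$).

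I do not anticipate any real obstacle, since the substantive work is already encapsulated in the four lemmas. The only point that deserves a moment's attention is the Korn step above: it must be applied on $\Omega^s$ with a Dirichlet trace imposed only on $\Gamma^s$ and not on $\Gamma^I$, and this is precisely the setting built into the definition of $V$. Everything else reduces to a direct verification of the hypotheses of the abstract theorem.
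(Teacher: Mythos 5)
Your proposal is correct and follows essentially the same route as the paper: both invoke \cite[Theorem 4.3.1]{Boffi:book} together with \Cref{lem:b-infsup,lem:a-symcont,lem:b-cont,lem:c-scsd} (the paper additionally notes $\mathrm{Ker}\,B^T=\cbr[0]{0}$, but this is already implied by the inf-sup condition of \Cref{lem:b-infsup} that you cite). Your extra details on bounding the right-hand-side functionals via Korn's inequality on $H^1_{\Gamma^s}(\Omega^s)$ simply make explicit what the paper leaves as "immediate."
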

\begin{proof}
  This is an immediate consequence of \cite[Theorem
  4.3.1]{Boffi:book},
  \Cref{lem:b-infsup,lem:a-symcont,lem:b-cont,lem:c-scsd}, and that
  $\text{Ker}\,B^T$ is the zero set $\cbr[0]{0}$ by
  definition of $Q$ and $Q^m$.
\end{proof}

%---------------------------------------------------------------------
\section{The HDG method}
\label{sec:hdg}

%---------------------------------------------------------------------
\subsection{Notation}
\label{ss:not-hdg}

The HDG method presented here for
\cref{eq:Stokes,eq:dualporosity,eq:IC} is an extension of the HDG
method for the Stokes--Darcy problem as presented in
\cite{Cesmelioglu:2020}. Let $\mathcal{T}^j$ be a triangulation of
$\Omega^j$, $j=s,d$, such that $\mathcal{T}^s$ and $\mathcal{T}^d$
match at the interface $\Gamma^I$ and let
$\mathcal{T}:=\mathcal{T}^s\cup \mathcal{T}^d$.  We denote by $h_K$
the diameter of $K$, for $K\in \mathcal{T}$, and define
$h=\max_{K\in \mathcal{T}} h_K$. For $j=s,d$, let $\mathcal{F}^j_i$
denote the set of all interior facets in $\Omega^j$, let
$\mathcal{F}^j_b$ denote the set of all facets on the boundary
$\Gamma^j$, and $\mathcal{F}^I$ denote the set of all facets that lie
on $\Gamma^I$. We define
$\mathcal{F}^j:=\mathcal{F}_i^j\cup \mathcal{F}_b^j \cup
\mathcal{F}^I$, and the set of all facets in $\overline{\Omega}$ is
denoted by $\mathcal{F}$.  Let $\Gamma_0^j$ be the union of all facets
in $\overline{\Omega}^j$, for $j=s,d$, and let $\Gamma_0$ be the union
of all facets in $\overline{\Omega}$ .

We define the following discrete velocity and pressure spaces on
$\Omega$ and $\Omega^j$, $j=s,d$:
\begin{align*}
  V_h
  &:=\cbr[1]{v_h\in [L^2(\Omega)]^{\text{dim}}: v_h \in [P_k(K)]^{\text{dim}} \quad \forall K\in \mathcal{T} }, 
  \\
  V_h^j
  &:=\cbr[1]{v_h\in [L^2(\Omega^j)]^{\text{dim}}: v_h \in [P_k(K)]^{\text{dim}} \quad \forall K\in \mathcal{T}^j } \quad j=s,d,
  \\
  Q_h
  &:=\cbr[1]{q_h\in L^2_0(\Omega): q_h\in P_{k-1}(K)\quad \forall K\in \mathcal{T} },
  \\
  Q_h^j
  &:=\cbr[1]{q_h\in L^2(\Omega^j): q_h\in P_{k-1}(K)\quad \forall K\in \mathcal{T}^j },\quad j=s,d,
  \\
  Q_h^m
  &:=\cbr[1]{r_h\in L^2_0(\Omega^d): r_h\in P_{k-1}(K)\quad \forall K\in \mathcal{T}^d },
\end{align*}
where $P_k(K)$ denotes the polynomial space of total degree at most
$k$ in $K$. We also define the following discrete trace spaces for the
Stokes velocity and pressure on $\Gamma_0^s$, and pressures in the
microfractures and the matrix on $\Gamma_0^d$:
\begin{align*}
  \bar{V}_h
  &:=\cbr[1]{ \bar{v}_h\in [L^2(\Gamma_0^s)]^{\text{dim}}: \bar{v}_h\in [P_k(F)]^{\text{dim}}
    \quad \forall F \in \mathcal{F}^s, \quad \bar{v}_h=0 \text{ on } \Gamma^s},
  \\
  \bar{Q}_h^j
  &:=\cbr[1]{\bar{q}_h^j\in L^2(\Gamma_0^j): \bar{q}_h^j\in P_k(F)\quad \forall F \in \mathcal{F}^j }, \quad j=s,d.
\end{align*}
For notational convenience, we further define
\begin{align*}
  \boldsymbol{V}_h &:= V_h\times \bar{V}_h,
  & \boldsymbol{Q}_h &:= Q_h\times\bar{Q}_h^s\times\bar{Q}_h^d,
  & \boldsymbol{Q}_h^j &:= Q_h^j\times\bar{Q}_h^j,\, j=s,d,  
  \\
  \boldsymbol{Q}_h^m &:= Q_h^m\times \bar{Q}_h^d,
  & \boldsymbol{Z}_h &:= \boldsymbol{V}_h\times V_h^d,
  & \boldsymbol{P}_h& :=\boldsymbol{Q}_h\times\boldsymbol{Q}_h^m,
  &&
\end{align*}
and set
$\boldsymbol{v}_h:=(v_h,\bar{v}_h, v_h^m)\in \boldsymbol{Z}_h$,
$\boldsymbol{q}_h:=(q_h,\bar{q}_h^s,\bar{q}_h^d,q_h^m,\bar{q}_h^m)\in
\boldsymbol{P}_h$, and $\boldsymbol{q}_h^j:=(q_h^j,\bar{q}_h^j)$,
$j=s,d, m$.

The spaces $\boldsymbol{Z}_h$ and $\boldsymbol{P}_h$ are equipped with
the following norms: 
\begin{align*}
  \norm[0]{\boldsymbol{v}_h}^2_{Z_h}
  &:= \norm[0]{\boldsymbol{v}}^2_{V_h^s} + \norm[0]{v_h^d}_{H(\text{div},\Omega^d)}^2
    + \norm[0]{v_h^m}_{H(\text{div},\Omega^d)}^2 + \norm[0]{\bar{v}_h^t}^2_{\Gamma^I},
  \\
  \norm[0]{\boldsymbol{v}_h}^2_{{Z}_h^*}
  &:= \norm[0]{\boldsymbol{v}_h}_{{Z}_h} + \sum_{K\in \mathcal{T}^s} h_K^2 |v_h|^2_{2,K},
  \\
  \norm[0]{\boldsymbol{q}_h}_{P_h}^2
  &:= \sum_{j=s,d,m}\norm[0]{\boldsymbol{q}_h^j}_{Q_h^j}^2,
\end{align*}
where in the Stokes and dual-porosity subdomains we further define:
\begin{align*}
  \|\boldsymbol{v}_h\|^2_{V_h^s}
  &:= \|\epsilon(v_h^s)\|^2_{\Omega^s}+\sum_{K\in \mathcal{T}^s}h_K^{-1}\|v_h-\bar{v}_h\|^2_{\partial K},
  \\
  \|\boldsymbol{v}_h\|^2_{V_h^{s,*}}
  &:= \|\boldsymbol{v}_h\|^2_{V_h^s}+\sum_{K\in \mathcal{T}^s} h_K^2 |v_h|^2_{2,K},
  \\
  \|\boldsymbol{q}_h^j\|_{Q_h^j}^2
  &:= \|q_h^j\|^2_{\Omega^j}+\sum_{K\in\mathcal{T}^j}h_K\|\bar{q}_h^j\|^2_{K},\quad j=s,d,
  \\
  \|\boldsymbol{q}_h^m\|_{Q_h^m}^2
  &:= \|q_h^m\|^2_{\Omega^d}+\sum_{K\in\mathcal{T}^d}h_K\|\bar{q}_h^m\|^2_{K}.
\end{align*}

%---------------------------------------------------------------------
\subsection{The discretization}
\label{ss:disc-hdg}

In this section, we present our HDG method for the
dual-porosity-Stokes problem \cref{eq:Stokes,eq:dualporosity,eq:IC}
which couples the IP-HDG discretization \cite{Rhebergen:2017} for the
Stokes equations to a hybridized BDM discretization
\cite{Arnold:1985,Boffi:book,Du:book} for the dual-porosity problem.

Let us first define
$(\cdot,\cdot )_{\Omega^j}:=\sum_{K\in \mathcal{T}^j}(\cdot, \cdot)_K$
and
$\langle \cdot,\cdot \rangle_{\partial
  \mathcal{T}^j}:=\sum_{K\in\mathcal{T}^s}\langle
\cdot,\cdot\rangle_{\partial K}$ for $j=s,d$. The HDG method is given
by: \\
Find
$(\boldsymbol{u}_h,\boldsymbol{p}_h):=((u_h,\bar{u}_h,u^m_h),
(p_h,\bar{p}_h^s,\bar{p}_h^d, p_h^m,\bar{p}_h^m))\in
\boldsymbol{Z}_h\times \boldsymbol{P}_h$ such that:
\begin{subequations}
  \label{eq:HDG-dualporosityproblem-18}
  \begin{align}
    \label{eq:HDG-dualporosityproblem-18-1}
    a_h(\boldsymbol{u}_h,\boldsymbol{v}_h)+ b_h(\boldsymbol{p}_h,\boldsymbol{v}_h)
    &=( f, v_h)_{\Omega^s}
    && \forall \boldsymbol{v}_h:=(v_h,\bar{v}_h,v_h^m)\in \boldsymbol{Z}_h,
    \\
    \label{eq:HDG-dualporosityproblem-18-2}
    b_h(\boldsymbol{q}_h,\boldsymbol{u}_h)-c_h(\boldsymbol{p}_h,\boldsymbol{q}_h)
    &=-( g, q_h)_{\Omega^d}
    &&\forall \boldsymbol{q}_h:=(q_h,\bar{q}_h^s,\bar{q}_h^d, q_h^m,\bar{q}_h^m) \in \boldsymbol{P}_h,
  \end{align}
\end{subequations}
where 
\begin{align*}
  a_h(\boldsymbol{u},\boldsymbol{v})
  &:=a_h^s(\boldsymbol{u},\boldsymbol{v})+a_h^d(u,v)+a_h^m(u^m,v^m)+a_h^I(\bar{u},\bar{v}),
  \\
  b_h(\boldsymbol{p},\boldsymbol{v})
  &:=\sum_{j=s,d}[b_h^j(\boldsymbol{p},v)+b_h^{I,j}(\bar{p}^j,\bar{v})]+b_h^d(\boldsymbol{p}^m,v^m),
  \\
  c_h(\boldsymbol{p},\boldsymbol{q})
  &:=\sigma\kappa_m(p^m-p,q^m-q)_{\Omega^d}, 
\end{align*}
and
\begin{align*}
  a_h^s(\boldsymbol{u},\boldsymbol{v})
  :=&( 2\mu\epsilon(u),\epsilon(v))_{\Omega^s}-\langle 2\mu\epsilon(u)n^s,v-\bar{v}\rangle_{\partial \mathcal{T}^s}
      -\langle 2\mu\epsilon(v)n^s,u-\bar{u}\rangle_{\partial \mathcal{T}^s}
  \\
    &+\sum_{K\in \mathcal{T}^s}2\beta\mu h_K^{-1}\langle u-\bar{u},v-\bar{v}\rangle_{\partial K},
  \\
  a_h^d(u,v)
  :=&( \kappa_f^{-1}u,v)_{\Omega^d},
  \\
  a_h^m(u^m,v^m)
  :=&( \kappa_m^{-1}u^m, v^m )_{\Omega^d},
  \\
  a_h^I(\bar{u},\bar{v})
  :=&\langle \alpha\mu \kappa_f^{-1/2}\bar{u}^t,\bar{v}^t\rangle_{\Gamma^I},
  \\
  b_h^j(\boldsymbol{p},v)
  :=&-( p,\nabla \cdot v)_{\Omega^j}+\langle\bar{p}^j, v\cdot n\rangle_{\partial \mathcal{T}^j},\quad j=s,d,
  \\
  b_h^{I,j}(\bar{p}^j,\bar{v})
  :=&-\langle\bar{p}^j,\bar{v}\cdot n^j\rangle_{\Gamma^I}, \quad j=s,d.
\end{align*}

The following lemma shows that $u_h$ is
$H(\text{div};\Omega)$-conforming, that $u_h^m$ is
$H(\text{div};\Omega^d)$-conforming, that $u_h^s$ is pointwise
divergence-free on the elements in $\mathcal{T}^s$,  that \cref{eq:dualporosity-b} is satisfied
pointwise on the elements in $\mathcal{T}^d$ up to the error of the
$L^2$-projection of the source term $g$ into $Q_h^d$, and that
\cref{eq:dualporosity-d} is satisfied pointwise on the elements in
$\mathcal{T}^d$.
\begin{lemma}
  \label{lem:Mass conservation}
  The solution to \cref{eq:HDG-dualporosityproblem-18} satisfies:
  \begin{subequations}
    \label{eq:Numerical properties}
    \begin{align}
      \label{eq:Numerical properties a}
      \jump{u_h^j\cdot n}&=0 && \forall x \in F, \quad \forall F \in \mathcal{F}^j\backslash \mathcal{F}^I, \quad j=s,d,
      \\
      \label{eq:Numerical properties b}
      u_h^j \cdot n&= \bar{u}_h \cdot n &&\forall x \in F,  \quad \forall F \in \mathcal{F}^I,\quad j=s,d,
      \\
      \label{eq:Numerical properties c}
      \jump{u_h^m\cdot n}&=0 &&\forall x \in F,  \quad \forall F \in \mathcal{F}^d,
      \\
       \label{eq:Numerical properties f}
      \nabla\cdot u_h^s&=0 && \forall x \in K, \quad \forall K \in \mathcal{T}^s,
      \\
      \label{eq:Numerical properties d}
      \sigma \kappa_m(p^d_h-p^m_h)+\nabla \cdot u^d_h&=\Pi_Q^d g && \forall x \in K, \quad \forall K \in \mathcal{T}^d,
      \\
      \label{eq:Numerical properties e}
      \sigma \kappa_m(p^m_h-p^d_h)+\nabla \cdot u^m_h&=0 && \forall x \in K, \quad \forall K \in \mathcal{T}^d,
    \end{align}
  \end{subequations}
  where $\jump{\cdot}$ is the standard jump operator and $\Pi_Q^d$
  denotes the $L^2$-projection onto $Q_h^d$.
\end{lemma}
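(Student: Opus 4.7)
The plan is to prove each identity by substituting into \cref{eq:HDG-dualporosityproblem-18-2} a tailor-made test function $\boldsymbol{q}_h = (q_h,\bar{q}_h^s,\bar{q}_h^d,q_h^m,\bar{q}_h^m)$ in which only a single component is nonzero and is supported on either one facet or one element. Because every term of $b_h$ pairs either a volume unknown with $\nabla \cdot v$ or a facet unknown with a normal trace of $v$, this localization isolates the identity of interest while $c_h$ and the source $(g,q_h)_{\Omega^d}$ either vanish or contribute cleanly.

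First I would deduce the trace identities \cref{eq:Numerical properties a,eq:Numerical properties b,eq:Numerical properties c} by activating only a facet unknown $\bar{q}_h^j$, $j\in\{s,d,m\}$, supported on a single facet $F$. All other terms in \cref{eq:HDG-dualporosityproblem-18-2} vanish and the equation reduces to a single facet integral. For $F\in\mathcal{F}^j\setminus\mathcal{F}^I$ this yields $\langle \bar{q}_h^j,\jump{u_h\cdot n}\rangle_F = 0$ and hence \cref{eq:Numerical properties a}; for $F\in\mathcal{F}^I$ the additional contribution of $b_h^{I,j}$ gives $\langle \bar{q}_h^j,u_h\cdot n - \bar{u}_h\cdot n\rangle_F = 0$ and hence \cref{eq:Numerical properties b}; the analogous test with $\bar{q}_h^m$ on facets in $\mathcal{F}^d$ yields \cref{eq:Numerical properties c}. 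Together these show that $u_h \in H(\mathrm{div};\Omega)$ with $u_h\cdot n = 0$ on $\partial\Omega$ and $u_h^m \in H_0(\mathrm{div};\Omega^d)$, so in particular $\int_\Omega \nabla\cdot u_h = 0$ and $\int_{\Omega^d}\nabla\cdot u_h^m = 0$.

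For the element identities I would switch to volumetric tests, exploiting the conformity just obtained. For \cref{eq:Numerical properties f}, given $\tilde{q}\in P_{k-1}(K)$ on $K\in\mathcal{T}^s$, I build $q_h\in Q_h$ as $q_h = \tilde{q}\chi_K - c$ with $c = (\tilde{q},1)_K/|\Omega|$; the mean correction $c$ drops out of \cref{eq:HDG-dualporosityproblem-18-2} because $\int_\Omega \nabla\cdot u_h = 0$, leaving $(\tilde{q},\nabla\cdot u_h^s)_K = 0$ and hence $\nabla\cdot u_h^s = 0$ on $K$. An analogous construction on $K\in\mathcal{T}^d$ produces \cref{eq:Numerical properties d}, with $g$ replaced by $\Pi_Q^d g$ because $q_h|_{\Omega^d}\in Q_h^d$. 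Finally, for \cref{eq:Numerical properties e}, activating only $q_h^m \in Q_h^m$ gives that $\sigma\kappa_m(p_h^m-p_h^d)+\nabla\cdot u_h^m$ is $L^2$-orthogonal on $\Omega^d$ to $Q_h^m$, hence equals a global constant, which is then pinned to zero using \cref{eq:Numerical properties c} and $p_h^m\in L^2_0(\Omega^d)$.

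The hard part is handling the mean-zero constraints on $Q_h$ and $Q_h^m$: in each volumetric test the $L^2$-orthogonality determines the target equation only up to a global constant per subdomain, and showing each constant vanishes requires combining the integrated subdomain equations with the $H(\mathrm{div})$-conformity and the no-flow boundary conditions from \cref{eq:Numerical properties a,eq:Numerical properties c}, together with the cancellation on $\Gamma^I$ provided by \cref{eq:Numerical properties b}. This chain of cancellations relies on the built-in compatibility of the dual-porosity-Stokes data inherited from the boundary and interface conditions.
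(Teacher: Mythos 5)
Your argument for \cref{eq:Numerical properties a,eq:Numerical properties b,eq:Numerical properties c} is correct and is essentially the paper's own argument (you localize the facet tests one facet at a time, the paper chooses them globally; both are legitimate because the facet spaces $\bar{Q}_h^s,\bar{Q}_h^d,\bar{Q}_h^m$ carry no mean-value constraint). The genuine gap is in the volumetric step, and it sits exactly at the point you yourself flagged as ``the hard part''. Let $\Phi$ denote the piecewise polynomial equal to $\nabla\cdot u_h$ on $\Omega^s$ and to $\sigma\kappa_m(p_h-p_h^m)+\nabla\cdot u_h-\Pi_Q^d g$ on $\Omega^d$; testing \cref{eq:HDG-dualporosityproblem-18-2} with only the volume unknown $q_h$ active gives $(q_h,\Phi)_{\Omega}=0$ for all $q_h\in Q_h$. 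With your choice $q_h=\tilde q\chi_K-c$, $K\in\mathcal{T}^s$, this reads
\begin{equation*}
  (\tilde q,\nabla\cdot u_h)_K
  \;=\; c\int_\Omega \Phi
  \;=\; c\Bigl(\int_\Omega \nabla\cdot u_h
  \;+\;\sigma\kappa_m\int_{\Omega^d}(p_h-p_h^m)\;-\;\int_{\Omega^d}g\Bigr),
\end{equation*}
and only the first term in the parentheses vanishes by the $H(\mathrm{div})$-conformity you established; your assertion that the mean correction drops out ``because $\int_\Omega\nabla\cdot u_h=0$'' silently discards the pressure-mean and source-mean terms. The same defect occurs in your argument for \cref{eq:Numerical properties e}: after using \cref{eq:Numerical properties c} and $\int_{\Omega^d}p_h^m=0$, the global constant you must eliminate equals $-\sigma\kappa_m\int_{\Omega^d}p_h\,/\,|\Omega^d|$, and neither cited fact controls $\int_{\Omega^d}p_h$, since $p_h$ has zero mean over $\Omega$, not over $\Omega^d$.

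Moreover, the ``chain of cancellations'' you appeal to cannot exist in the stated generality. Integrating \cref{eq:Numerical properties d,eq:Numerical properties e} over $\Omega^d$ and combining with \cref{eq:Numerical properties a,eq:Numerical properties b,eq:Numerical properties c,eq:Numerical properties f} forces both $\int_{\Omega^d}p_h=0$ and $\int_{\Omega^d}g=0$: the asserted identities imply a compatibility condition on the data and a nontrivial normalization property of the discrete pressure, so they cannot be derived from conformity and the $L^2_0$ normalizations alone. For what it is worth, the paper's own proof is exposed to the identical issue: it selects $q_h$ and $q_h^m$ equal to the residuals themselves, and these are admissible test functions only if they have zero mean over $\Omega$ and $\Omega^d$ respectively, which is never verified and is precisely the point at stake. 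So your diagnosis of where the difficulty lies is accurate (and more explicit than the paper's presentation), but the resolution you propose does not close it; a complete proof needs the hypothesis $\int_{\Omega^d}g=0$ together with a separate argument that $\int_{\Omega^d}p_h=0$.
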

\begin{proof}
  Choosing $\boldsymbol{v}_h=0$, $q_h=0$, and $q_h^m=0$ in
  \cref{eq:HDG-dualporosityproblem-18} we obtain:
  \begin{equation*}
    0 = \sum_{j=s,d}\langle \Bar{q}_h^j,u_h\cdot n\rangle_{\partial \mathcal{T}^j}
    + \langle \Bar{q}_h^m, u_h^m\cdot n\rangle_{\partial \mathcal{T}^d}
    - \sum_{j=s,d} \langle \bar{q}_h^j, \Bar{u}_h\cdot n^j \rangle_{\Gamma^I},
  \end{equation*}
  for all
  $(\Bar{q}_h^s,\Bar{q}_h^d,\Bar{q}_h^m)\in\Bar{Q}_h^s \times
  \Bar{Q}_h^d \times \Bar{Q}_h^m$. Therefore,
  \begin{equation} 
    \label{eq:properties}
    0 = \sum_{j=s,d}\Big[\sum_{F\in\mathcal{F}^j_i\cup \mathcal{F}^j_b}\langle\Bar{q}_h^j,\jump{u_h\cdot n}\rangle_F
    + \sum_{F\in \mathcal{F}^I}\langle\bar{q}_h^j,(u_h^j-\Bar{u}_h)\cdot n^j\rangle_F\Big]
    + \sum_{F\in\mathcal{F}^d}\langle\Bar{q}_h^m,\jump{u_h^m\cdot n}\rangle_F.
  \end{equation}
  \Cref{eq:Numerical properties a,eq:Numerical properties
    b,eq:Numerical properties c} follow by setting, for $j=s,d$,
  \begin{equation*}
    \Bar{q}_h^j=
    \begin{cases}
      \jump{u_h\cdot n}  & \text{ on } F\in\mathcal{F}^j_i\cup \mathcal{F}^j_b,
      \\
      (u_h^j-\Bar{u}_h)\cdot n^j & \text{ on } F\in \mathcal{F}^I,
    \end{cases}
  \end{equation*}
  and $\bar{q}_h^m=\jump{u_h^m\cdot n} \text{ on } F\in\mathcal{F}^d$
  in \cref{eq:properties}. To prove \cref{eq:Numerical properties
    d,eq:Numerical properties e,eq:Numerical properties f}, set
  $\boldsymbol{v}_h=\boldsymbol{0}$, $q_h^m=0$, and $\Bar{q}_h^j=0$,
  $j=s,d,m$ in \cref{eq:HDG-dualporosityproblem-18}. 
  
    Then, selecting
    \begin{equation*}
      q_h=
      \begin{cases}
        \nabla \cdot u_h^s & \text{ in   } \Omega^s,
        \\
        \sigma \kappa_m(p_h^d-p_h^m)+ \nabla \cdot u_h^d-\Pi_Q^dg &\text{ in   }\Omega^d,
      \end{cases}  
    \end{equation*}
    gives \cref{eq:Numerical properties f,eq:Numerical properties d}.
  Finally, setting
  $\boldsymbol{v}_h=\boldsymbol{0}$, $q_h=0$, $\Bar{q}_h^j=0$,
  $j=s,d,m$, and
  $q_h^m=\sigma \kappa_m(p_h^m-p_h^d)+ \nabla \cdot u_h^m$ in
  \cref{eq:HDG-dualporosityproblem-18}, we obtain \cref{eq:Numerical
    properties e}.
\end{proof}

%---------------------------------------------------------------------
\subsection{Consistency and well-posedness of the HDG method}
\label{ss:con-wp-hdg}

The next lemma shows that the HDG method
\cref{eq:HDG-dualporosityproblem-18} is a consistent discretization of
the dual-porosity-Stokes problem
\cref{eq:Stokes,eq:dualporosity,eq:IC}.

\begin{lemma}[Consistency]
    \label{lem:consistency}
  If $(u, u^m, p, p^m)$ solves the dual-porosity-Stokes problem
  \cref{eq:Stokes,eq:dualporosity,eq:IC}, $\bar{u}$ denotes the
  trace of $u$ on the mesh skeleton, and $\bar{p}^j$ denotes the
  trace of $p^j$ ($j=s,d,m$) on the mesh skeleton, then
  $(u,\bar{u},u^m)$ and $(p,\bar{p}^s,\bar{p}^d,p^m,\bar{p}^m)$
  satisfy \cref{eq:HDG-dualporosityproblem-18}.  
\end{lemma}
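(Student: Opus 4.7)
The plan is to test each strong equation with the HDG test functions against each element, integrate by parts, and then use (i) the continuity of the exact traces, (ii) the boundary conditions on $\Gamma^s, \Gamma^d$, and (iii) the interface conditions \cref{eq:IC} to reassemble the expressions into the HDG bilinear forms. The key observation, which causes several terms to vanish without effort, is that since $u \in H^1_{\Gamma^s}(\Omega^s) \cap H(\mathrm{div},\Omega)$ and $u^m \in H_0(\mathrm{div},\Omega^d)$, the traces $\bar{u}$, $\bar{p}^s$, $\bar{p}^d$, $\bar{p}^m$ coincide with $u|_{\partial K}$, $p^s|_{\partial K}$, $p^d|_{\partial K}$, $p^m|_{\partial K}$, respectively, on every facet. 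In particular all symmetrization and penalty contributions $\langle 2\mu\epsilon(v_h)n^s, u - \bar u\rangle$ and $\langle u-\bar u, v_h-\bar v_h\rangle$ in $a_h^s$ vanish immediately.

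For \cref{eq:HDG-dualporosityproblem-18-1}, I would proceed in three parallel computations. First, dot \cref{eq:Stokes-a} with $v_h$, sum over $K\in\mathcal{T}^s$, and integrate both $-\nabla\cdot(2\mu\epsilon(u))$ and $\nabla p$ by parts. Replacing $p=\bar p^s$ on facets produces $(2\mu\epsilon(u),\epsilon(v_h))_{\Omega^s} - \langle 2\mu\epsilon(u)n^s, v_h\rangle_{\partial\mathcal{T}^s} - (p,\nabla\cdot v_h)_{\Omega^s} + \langle \bar p^s, v_h\cdot n\rangle_{\partial\mathcal{T}^s}$. I then split $\langle 2\mu\epsilon(u)n^s, v_h\rangle = \langle 2\mu\epsilon(u)n^s, v_h - \bar v_h\rangle + \langle 2\mu\epsilon(u)n^s, \bar v_h\rangle$; the second term collapses to a $\Gamma^I$ integral (interior facets cancel since $\epsilon(u)n$ is continuous and $n$ flips; $\bar v_h$ vanishes on $\Gamma^s$). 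Decomposing $\bar v_h$ on $\Gamma^I$ into normal and tangential parts and applying \cref{eq:IC-c,eq:IC-d} converts this boundary term into $\langle (p^s - p^d), \bar v_h\cdot n^s\rangle_{\Gamma^I} - \langle \alpha\mu\kappa_f^{-1/2}\bar u^t, \bar v_h^t\rangle_{\Gamma^I}$, and using $n^s = -n^d$ turns the pressure piece into $-b_h^{I,s}(\bar p^s,\bar v_h) - b_h^{I,d}(\bar p^d,\bar v_h)$. Second, test \cref{eq:dualporosity-a} with $v_h$ in $\Omega^d$ and integrate $\nabla p$ by parts, using $p=\bar p^d$ on facets to obtain $a_h^d(u,v_h) + b_h^d(\boldsymbol p, v_h)$. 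Third, repeat with \cref{eq:dualporosity-c} and $v_h^m$, using $p^m = \bar p^m$ on facets to obtain $a_h^m(u^m, v_h^m) + b_h^d(\boldsymbol p^m, v_h^m)$. Summing the three identities reproduces \cref{eq:HDG-dualporosityproblem-18-1}.

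For \cref{eq:HDG-dualporosityproblem-18-2}, the strategy is to show that all facet contributions in $b_h(\boldsymbol{q}_h, \boldsymbol{u})$ reduce to element volume terms, then substitute the strong forms \cref{eq:dualporosity-b,eq:dualporosity-d} to recover the right-hand side and $c_h$. Concretely, in $\langle \bar q_h^s, u\cdot n\rangle_{\partial\mathcal{T}^s}$ interior facets cancel by continuity of $u$, $\Gamma^s$ vanishes by \cref{eq:Stokes-c}, and only a $\Gamma^I$ integral $\langle \bar q_h^s, u^s\cdot n^s\rangle$ remains. This cancels exactly against $b_h^{I,s}(\bar q_h^s,\bar u)$ because $\bar u\cdot n^s = u^s\cdot n^s$. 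The analogous identity on the $d$-side uses \cref{eq:IC-b} together with $n^s=-n^d$: $\bar u\cdot n^d = -u^s\cdot n^s = -u^d\cdot n^s = u^d\cdot n^d$. For the matrix term, every facet contribution in $\langle \bar q_h^m, u^m\cdot n\rangle_{\partial\mathcal{T}^d}$ vanishes by continuity of $u^m$ together with \cref{eq:dualporosity-e,eq:IC-a}. What remains is $b_h(\boldsymbol q_h,\boldsymbol u) = -(q_h,\nabla\cdot u)_{\Omega^d} - (q_h^m,\nabla\cdot u^m)_{\Omega^d}$, into which I substitute $\nabla\cdot u = g - \sigma\kappa_m(p-p^m)$ and $\nabla\cdot u^m = \sigma\kappa_m(p - p^m)$; collecting like terms yields $-(g,q_h)_{\Omega^d} + c_h(\boldsymbol p,\boldsymbol q_h)$, which is \cref{eq:HDG-dualporosityproblem-18-2}.

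The bookkeeping in the first equation, specifically translating the single remaining $\Gamma^I$ flux $\langle 2\mu\epsilon(u)n^s,\bar v_h\rangle$ into the correct combination of $a_h^I$, $b_h^{I,s}$, and $b_h^{I,d}$ via \cref{eq:IC-c,eq:IC-d}, is the main place care is needed; everything else is a routine element-wise integration by parts followed by cancellation via continuity of the exact traces and the boundary/interface data.
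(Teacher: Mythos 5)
Your proposal is correct and is essentially the paper's own proof run in the opposite direction: the paper plugs the exact solution into $a_h$ and $b_h$ and integrates by parts back toward the strong form, while you start from the strong equations and integrate by parts forward into the discrete forms. All the key ingredients coincide — the penalty and symmetrization terms vanish because $u=\bar u$ on facets, interior-facet terms cancel by single-valuedness of the trace test functions, the $\Gamma^I$ flux $\langle 2\mu\epsilon(u)n^s,\bar v_h\rangle$ is split into normal/tangential parts handled by \cref{eq:IC-c,eq:IC-d}, and the second equation follows from \cref{eq:Stokes-c,eq:dualporosity-e,eq:dualporosity-f,eq:IC-a,eq:IC-b} for the facet cancellations together with \cref{eq:Stokes-b,eq:dualporosity-b,eq:dualporosity-d} to produce $c_h$ and the source term.
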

\begin{proof}
  The proof follows the same argument as in \cite[Lemma
  1]{Cesmelioglu:2020}. Using smoothness of $u$, single-valuedness of
  $\bar{v}_h$, that $\bar{v}_h=0$ on $\Gamma^s$, and \cref{eq:IC-d} we
  get:
  \begin{align*}
    a_h&((u,\bar{u},u^m),\boldsymbol{v}_h)
    =( 2\mu\epsilon(u),\epsilon(v_h))_{\Omega^s}
       -\langle2\mu\epsilon(u)n^s,v_h-\bar{v}_h\rangle_{\partial \mathcal{T}^s}
      \\
     &  +\langle\alpha \mu\kappa_f^{-1/2}(u^s)^t,\bar{v}_h^t\rangle_{\Gamma^I}
   +( \kappa_f^{-1}u, v_h)_{\Omega^d}+( \kappa_m^{-1}u^m, v_h^m)_{\Omega^d}
    \\
    =&- ( \nabla \cdot (2\mu \epsilon(u)),v_h)_{\Omega^s}
       -\langle\alpha \mu \kappa_f^{-1/2}(u^s)^t, \bar{v}_h^t \rangle_{\Gamma^I}
       +\langle2\mu(n^s\cdot \epsilon(u)n^s)n^s, \bar{v}_h \rangle_{\Gamma^I}
    \\
     &+\langle \alpha \mu \kappa_f^{-1/2} (u^s)^t, \bar{v}_h^t\rangle_{\Gamma^I}
       + (\kappa_f^{-1}u, v_h)_{\Omega^d}+( \kappa_m^{-1} u^m, v_h^m )_{\Omega^d}
    \\
    =&- ( \nabla \cdot (2\mu\epsilon(u)), v_h)_{\Omega^s}
       +\langle 2\mu(n^s\cdot \epsilon(u)n^s)n^s, \bar{v}_h \rangle_{\Gamma^I}
      \\
     & + (\kappa_f^{-1}u, v_h)_{\Omega^d}+( \kappa_m^{-1} u^m, v_h^m )_{\Omega^d}.
  \end{align*}
  Furthermore, after integration-by-parts,
  \begin{align*}
    &\sum_{j=s,d}
      [b_h^j((p,\bar{p}^s,\bar{p}^d,p^m,\bar{p}^m),v_h)
      + b_h^{I,j}(\bar{p}^j,\bar{v}_h)]
      + b_h^d((p^m,\bar{p}^m),v_h^m)
    \\
    =&-( p,\nabla \cdot v_h)_{\Omega^s}
       +\langle p, v_h\cdot n\rangle_{\partial \mathcal{T}^s}
       -\langle p^s,\bar{v}_h\cdot n^s\rangle_{\Gamma^I}
       -( p,\nabla \cdot v_h)_{\Omega^d}+
       \langle p, v_h\cdot n\rangle_{\partial \mathcal{T}^d}
    \\
    &-\langle p^d,\bar{v}_h\cdot n^d\rangle_{\Gamma^I}
      -( p^m,\nabla\cdot v_h^m)_{\Omega^d}
      +\langle p^m, v_h^m\cdot n^d \rangle_{\partial \mathcal{T}^d}
    \\
    =& ( \nabla p,v_h)_{\Omega}
       +\langle(p^d-p^s),\bar{v}_h\cdot n \rangle_{\Gamma^I}
       +( \nabla p^m, v_h^m)_{\Omega^d}.
  \end{align*}
  Combining the above two results:
  \begin{align*}
    a_h&((u,\bar{u},u^m),\boldsymbol{v}_h)
    +\sum_{j=s,d}[b_h^j((p,\bar{p}^s,\bar{p}^d,p^m,\bar{p}^m),v_h)
      +b_h^{I,j}(p^j,\bar{v}_h)]+b_h^d(\boldsymbol{p}^m,v_h^m)
    \\
    &+\sum_{j=s,d}\big(b_h^j(\boldsymbol{q}_h,u)+b_h^{I,j}(\bar{q}_h^j,u)\big)
      +b_h^d(\boldsymbol{q}_h^m,u^m)-c_h((p,\bar{p}^s,\bar{p}^d,p^m,\bar{p}^m),\boldsymbol{q}_h)
    \\
    =&- ( \nabla \cdot (2\mu \epsilon(u)),v_h)_{\Omega^s}
       + \langle 2\mu(n^s\cdot \epsilon(u)n^s)n^s,\bar{v}_h \rangle_{\Gamma^I}
       + (\kappa_f^{-1}u, v_h)_{\Omega^d}
       \\
       &+ (\kappa_m^{-1} u^m, v_h^m)_{\Omega^d}
    + ( \nabla p, v_h)_{\Omega}+\langle p^d-p^s,\bar{v}_h\cdot n\rangle_{\Gamma^I}
      + ( \nabla p^m, v_h^m)_{\Omega^d}
    \\
    &- ( q_h,\nabla \cdot u)_{\Omega}
    + \langle \bar{q}_h^s, u \cdot n^s \rangle_{\partial \mathcal{T}^s}
      + \langle \bar{q}_h^d, u \cdot n^d\rangle_{\partial \mathcal{T}^d}
      \\
      &- \langle \bar{q}_h^s,u\cdot n^s\rangle_{\Gamma^I}
      - \langle \bar{q}_h^d,u\cdot n^d\rangle_{\Gamma^I}
    \\
    &- ( q_h^m,\nabla\cdot u^m)_{\Omega^d}
      + \langle \bar{q}_h^m, u^m\cdot n^d \rangle_{\partial \mathcal{T}^d}
      - ( \sigma \kappa_m (p^m-p), q_h^m-q_h)_{\Omega^d}
    \\
    =&- ( \nabla \cdot (2\mu \epsilon(u)) - \nabla p, v_h )_{\Omega^s}
       +(\kappa_f^{-1}u+\nabla p, v_h)_{\Omega^d}  
    \\
    &+\langle \big(2\mu(n^s\cdot \epsilon(u)n^s) + p^d-p^s)n^s,\bar{v}_h \rangle_{\Gamma^I}
      -( q_h,\nabla \cdot u+ \sigma \kappa_m (p-p^m) )_{\Omega^d}
    \\
    & -( q^m_h, \nabla \cdot u^m+ \sigma \kappa_m (p^m-p) )_{\Omega^d}
    \\
    =&(f, v_h)_{\Omega^s} -(g, q_h)_{\Omega^d},
  \end{align*}
  where we used smoothness of $u$ and $u^m$, single-valuedness of
  $\bar{q}_h^j$, $j=s,d,m$, and
  \cref{eq:Stokes,eq:dualporosity,eq:IC}.
\end{proof}

The next two lemmas show coercivity of $a_h(\cdot, \cdot)$ and
boundedness of $a_h^s(\cdot, \cdot)$ and $a_h(\cdot, \cdot)$.

\begin{lemma}[Coercivity]
  \label{lem:Coercivity a}
  Let
  $C_e=\min\{\mu(1-C_{\text{tr}}^2/\beta)\min(1,\beta),\alpha
  \kappa_f^{-1/2},\kappa_f^{-1},\kappa_m^{-1}\}$ where $C_{\text{tr}}>0$ is a
  constant of discrete trace inequality independent of $h, \mu$, $\kappa_f$, $\kappa_m$, and
  $\sigma$. Then for sufficiently large penalty parameter $\beta>0$,
  \begin{equation*}
    a_h(\boldsymbol{v}_h,\boldsymbol{v}_h)
    \geq
    C_e(\|\boldsymbol{v}_h\|_{V_h^s}^2
    +\|\bar{v}_h^t\|_{\Gamma^I}+\|v_h\|^2_{\Omega^d}+\|v_h^m\|^2_{\Omega^d}).
  \end{equation*}
  Furthermore, if $\nabla \cdot v_h=\nabla \cdot v_h^m = 0$ in
  $\Omega^d$, then
  \begin{equation*}
    a_h(\boldsymbol{v}_h,\boldsymbol{v}_h)\geq C _e\|\boldsymbol{v}_h\|^2_{Z_h}.
  \end{equation*}
\end{lemma}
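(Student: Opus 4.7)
The plan is to split $a_h(\boldsymbol{v}_h,\boldsymbol{v}_h)$ into the four diagonal pieces
\begin{equation*}
a_h^s(\boldsymbol{v}_h,\boldsymbol{v}_h) + a_h^d(v_h,v_h) + a_h^m(v_h^m,v_h^m) + a_h^I(\bar{v}_h,\bar{v}_h)
\end{equation*}
and bound each separately. The last three pieces are already weighted squared norms and give, respectively, $\kappa_f^{-1}\|v_h\|^2_{\Omega^d}$, $\kappa_m^{-1}\|v_h^m\|^2_{\Omega^d}$, and $\alpha\mu\kappa_f^{-1/2}\|\bar{v}_h^t\|^2_{\Gamma^I}$ with no work at all. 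All the effort sits in $a_h^s(\boldsymbol{v}_h,\boldsymbol{v}_h)$, whose indefinite symmetrization terms $-2\langle 2\mu\epsilon(v_h)n^s, v_h-\bar{v}_h\rangle_{\partial\mathcal{T}^s}$ have to be absorbed into the coercive pieces $2\mu\|\epsilon(v_h)\|^2_{\Omega^s}$ and $2\beta\mu\sum_K h_K^{-1}\|v_h-\bar{v}_h\|^2_{\partial K}$.

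For these cross terms I will apply, on each $K\in\mathcal{T}^s$, the discrete trace inequality $\|\epsilon(v_h)n^s\|_{\partial K}\leq C_{\text{tr}}h_K^{-1/2}\|\epsilon(v_h)\|_K$, followed by Cauchy--Schwarz on $\partial K$ and a weighted Young's inequality $2ab\leq \eta a^2+\eta^{-1}b^2$ with a free parameter $\eta>0$. Summing over $K\in\mathcal{T}^s$ yields an estimate of the form
\begin{equation*}
a_h^s(\boldsymbol{v}_h,\boldsymbol{v}_h)\geq 2\mu(1-\eta)\|\epsilon(v_h)\|^2_{\Omega^s} + 2\mu\bigl(\beta - C_{\text{tr}}^2/\eta\bigr)\sum_{K\in\mathcal{T}^s}h_K^{-1}\|v_h-\bar{v}_h\|^2_{\partial K}.
\end{equation*}
Both coefficients are positive precisely when $C_{\text{tr}}^2/\beta<\eta<1$, which forces the penalty condition $\beta>C_{\text{tr}}^2$. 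Tuning $\eta$ (if necessary splitting the cases $\beta\leq 1$ and $\beta>1$) so that both coefficients are at least $\mu(1-C_{\text{tr}}^2/\beta)\min(1,\beta)$ delivers $a_h^s(\boldsymbol{v}_h,\boldsymbol{v}_h)\geq \mu(1-C_{\text{tr}}^2/\beta)\min(1,\beta)\,\|\boldsymbol{v}_h\|^2_{V_h^s}$.

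Adding the four lower bounds and taking the minimum of the four coercivity constants produces the first inequality with $C_e$ exactly as stated. The second inequality is then immediate: under the divergence-free hypotheses $\nabla\cdot v_h=\nabla\cdot v_h^m=0$ in $\Omega^d$ the $H(\text{div},\Omega^d)$-norms of $v_h^d$ and $v_h^m$ collapse to their $L^2(\Omega^d)$-norms, which are exactly the quantities already controlled by $a_h^d$ and $a_h^m$, so the right-hand side of the first bound is identically $C_e\|\boldsymbol{v}_h\|^2_{Z_h}$.

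The only real obstacle is the fine-tuning of the Young parameter $\eta$ so that the resulting constant matches precisely the stated form $\mu(1-C_{\text{tr}}^2/\beta)\min(1,\beta)$; most balanced choices give a qualitative coercivity estimate, but reproducing the exact $\min(1,\beta)$ factor cleanly is most easily done by treating $\beta\leq 1$ and $\beta>1$ separately. Everything else reduces to bookkeeping of constants.
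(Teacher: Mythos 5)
Your proposal is correct and follows essentially the same route as the paper: the paper likewise isolates the Stokes block $a_h^s$ (the other three blocks being trivially diagonal), applies the discrete trace inequality to the symmetrization term, and absorbs it via the algebraic inequality $x^2-2\psi xy+y^2\ge(1-\psi^2)(x^2+y^2)/2$ with $\psi=C_{\text{tr}}\beta^{-1/2}$ and $y^2=\beta\sum_{K\in\mathcal{T}^s}h_K^{-1}\|v_h-\bar v_h\|^2_{\partial K}$, which is exactly your weighted Young step in disguise. The one detail you leave implicit, the tuning of $\eta$, needs no case split: taking $\eta=(1+C_{\text{tr}}^2/\beta)/2$ gives coefficients $\mu(1-C_{\text{tr}}^2/\beta)$ and $2\mu\beta(1-C_{\text{tr}}^2/\beta)/(1+C_{\text{tr}}^2/\beta)$, both of which are at least $\mu(1-C_{\text{tr}}^2/\beta)\min(1,\beta)$, matching the stated $C_e$.
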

\begin{proof}
  The result follows the same steps as the proof of \cite[Lemma
  4.2]{Rhebergen:2017}. First note that
  \begin{equation}
    \label{eq:ahvhvhcoerc}
    \begin{split}
      a_h(\boldsymbol{v}_h,\boldsymbol{v}_h)
      =&2\mu\big(\|\epsilon(v_h)\|_{\Omega^s}^2-\langle 2\epsilon(v_h)n^s,v_h-\bar{v}_h\rangle_{\partial \mathcal{T}^s}
      +\beta \sum_{K\in \mathcal{T}^s}h_K^{-1}\|v_h-\bar{v}_h\|_{\partial K}^2\big) 
      \\
      &+\alpha \kappa_f^{-1/2}\|\bar{v}_h^t\|_{\Gamma^I}^2+\kappa_f^{-1}\|v_h\|_{\Omega^d}^2+\kappa_m^{-1}\|v_h^m\|_{\Omega^d}^2.      
    \end{split}
  \end{equation}
  Applying the Cauchy--Schwarz inequality and a discrete trace
  inequality \cite[Lemma 1.46]{DiPietro:book}  on the second term on the right
  hand side, we get:
  \begin{align*}
    \langle 2\epsilon(v_h)n^s,v_h-\bar{v}_h\rangle_{\partial \mathcal{T}^s}
    &\leq 2 \big(\sum_{K\in \mathcal{T}^s} h_K\|\epsilon(v_h)\|_{\partial K}^2\big)^{\frac{1}{2}}
      \big(\sum_{K\in \mathcal{T}^s} h_K^{-1} \|v_h-\bar{v}_h\|_{\partial K}^2\big)^{\frac{1}{2}}
    \\
    &\leq 2C_{\text{tr}} \|\epsilon(v_h)\|_{\Omega^s} \big(\sum_{K\in \mathcal{T}^s}h_K^{-1}\|v_h-\bar{v}_h\|_{\partial K}^2\big)^{\frac{1}{2}}
  \end{align*}
  where $C_{\text{tr}}$ is the discrete trace inequality constant depending on the shape regularity of meshes, $k$, and $\text{dim}$. We refer to \cite{Hestheven:2003} for explicit dependence of $C_{\text{tr}}$ on $k$ and $\text{dim}$. 
  Combine this with \cref{eq:ahvhvhcoerc} and recall the inequality
  $x^2-2\psi x y+y^2\geq (1-\psi^2)(x^2+y^2)/2$, which holds for all
  $x,y \in \mathbb{R}$ and $0<\psi<1$. Choose
  $x=\|\epsilon(v_h)\|_{\Omega^s}$,
  $y^2=\beta\sum_{K\in\mathcal{T}^s}h_K^{-1}\|v-\bar{v}\|^2_{\partial
    K}$, and $\psi=C_{\text{tr}}\beta^{-\frac{1}{2}}$ to find:
  \begin{align*}
    a_h(&\boldsymbol{v}_h, \boldsymbol{v}_h)
    \\
    \geq& 2\mu\Big[\|\epsilon(v_h)\|^2_{\Omega^s}
          -2C_{\text{tr}}\|\epsilon(v_h)\|_{\Omega^s}
          \big(\sum_{K\in \mathcal{T}^s}h_K^{-1}\|v_h-\bar{v}_h\|^2_{\partial K}\big)^{\frac{1}{2}}
    \\
        &
          \hspace{2em}+\beta\sum_{K\in\mathcal{T}^s}h_K^{-1}\|v-\bar{v}\|^2_{\partial K}\Big]
          +\alpha \kappa_f^{-1/2}\|\bar{v}_h^t\|_{\Gamma^I}+\kappa_f^{-1}\|v_h\|^2_{\Omega^d}+\kappa_m^{-1}\|v_h^m\|^2_{\Omega^d}
    \\
    \geq& (\mu(1-C_{\text{tr}}^2/\beta)\big(\|\epsilon(v_h)\|^2_{\Omega^s}+\beta\sum_{K\in \mathcal{T}^s}h_K^{-1}\|v_h-\bar{v}_h\|^2_{\partial K}\big)
          +\alpha \kappa_f^{-1/2}\|\bar{v}_h^t\|_{\Gamma^I}
    \\
        &+\kappa_f^{-1}\|v_h\|^2_{\Omega^d}+\kappa_m^{-1}\|v_h^m\|^2_{\Omega^d}
    \\
    \geq &C_e(\|\boldsymbol{v}_h\|_{V_h^s}^2
           +\|\bar{v}_h^t\|_{\Gamma^I}+\|v_h\|^2_{\Omega^d}+\|v_h^m\|^2_{\Omega^d}),
  \end{align*}
  proving the result.
\end{proof}
\begin{lemma}[Boundedness]
 \label{lem:Continuity of a}
 The bilinear forms $a_h^s(\cdot,\cdot)$ and $a_h(\cdot,\cdot)$
 satisfy 
 \begin{subequations}
   \label{eq:ahboundednessterms}
   \begin{align}
     \label{eq:ahsboundedness}
     a_h^s(\boldsymbol{u}_h,\boldsymbol{v}_h)
     &\leq C_c^s\|\boldsymbol{u}_h\|_{V_h^s}\|\boldsymbol{v}_h\|_{V_h^s}
     && \forall \boldsymbol{u}_h,\boldsymbol{v}_h \in \boldsymbol{Z}_h,
     \\
     \label{eq:ahboundedness}
     a_h(\boldsymbol{u}_h,\boldsymbol{v}_h)
     &\leq C_c\|\boldsymbol{u}_h\|_{Z_h}\|\boldsymbol{v}_h\|_{Z_h}
     && \forall \boldsymbol{u}_h,\boldsymbol{v}_h \in \boldsymbol{Z}_h,
   \end{align}   
 \end{subequations}
 where $C_c^s:=2\mu\max(1+C_{\rm tr}, \beta+C_{\rm tr})$ and
 $C_c:=\max(C_c^s,\kappa_f^{-1},\kappa_m^{-1},\alpha \mu
 \kappa_f^{-1/2})$ and with $C>0$ a constant independent of $\mu$,
 $\kappa_f$, $\kappa_m$, and $\sigma$.
\end{lemma}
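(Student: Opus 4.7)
The plan is to bound $a_h^s(\boldsymbol{u}_h,\boldsymbol{v}_h)$ and $a_h(\boldsymbol{u}_h,\boldsymbol{v}_h)$ term-by-term via Cauchy--Schwarz, using the same discrete trace inequality that appears in \Cref{lem:Coercivity a} to handle face integrals involving $\epsilon(\cdot)$, and then to collect the resulting expression into the claimed product of norms.

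For \cref{eq:ahsboundedness}, I would treat the four terms in the definition of $a_h^s$ separately. The volume term and the penalty term are bounded directly by Cauchy--Schwarz as $2\mu\|\epsilon(u_h)\|_{\Omega^s}\|\epsilon(v_h)\|_{\Omega^s}$ and $2\beta\mu(\sum_K h_K^{-1}\|u_h-\bar u_h\|_{\partial K}^2)^{1/2}(\sum_K h_K^{-1}\|v_h-\bar v_h\|_{\partial K}^2)^{1/2}$, respectively. For the two consistency-symmetry terms, I would split each boundary integral so that a factor $h_K^{1/2}\|\epsilon(\cdot)\|_{\partial K}$ pairs with a factor $h_K^{-1/2}\|w_h-\bar w_h\|_{\partial K}$, apply Cauchy--Schwarz over the skeleton, and invoke the discrete trace inequality $h_K^{1/2}\|\epsilon(w_h)\|_{\partial K}\leq C_{\rm tr}\|\epsilon(w_h)\|_K$ used in \Cref{lem:Coercivity a} to convert the face norm of $\epsilon$ into its element norm.

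Collecting the four estimates with $A_u:=\|\epsilon(u_h)\|_{\Omega^s}$, $B_u:=(\sum_K h_K^{-1}\|u_h-\bar u_h\|_{\partial K}^2)^{1/2}$, and analogously $A_v,B_v$, I arrive at $|a_h^s(\boldsymbol{u}_h,\boldsymbol{v}_h)|\leq 2\mu\bigl(A_uA_v+C_{\rm tr}(A_uB_v+A_vB_u)+\beta B_uB_v\bigr)$, which is $2\mu$ times the bilinear form on $\mathbb{R}^2$ associated with the symmetric nonnegative matrix $M=\bigl(\begin{smallmatrix}1 & C_{\rm tr}\\ C_{\rm tr} & \beta\end{smallmatrix}\bigr)$ acting on $(A_u,B_u)$ and $(A_v,B_v)$. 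Since the spectral norm of $M$ is bounded by its maximum row sum $\max(1+C_{\rm tr},\beta+C_{\rm tr})$, \cref{eq:ahsboundedness} follows with $C_c^s=2\mu\max(1+C_{\rm tr},\beta+C_{\rm tr})$.

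For \cref{eq:ahboundedness}, I would combine the just-obtained bound on $a_h^s$ with the elementary Cauchy--Schwarz estimates $|a_h^d(u_h,v_h)|\leq\kappa_f^{-1}\|u_h^d\|_{H(\text{div},\Omega^d)}\|v_h^d\|_{H(\text{div},\Omega^d)}$, $|a_h^m(u_h^m,v_h^m)|\leq\kappa_m^{-1}\|u_h^m\|_{H(\text{div},\Omega^d)}\|v_h^m\|_{H(\text{div},\Omega^d)}$, and $|a_h^I(\bar u_h,\bar v_h)|\leq\alpha\mu\kappa_f^{-1/2}\|\bar u_h^t\|_{\Gamma^I}\|\bar v_h^t\|_{\Gamma^I}$. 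Pulling $C_c:=\max(C_c^s,\kappa_f^{-1},\kappa_m^{-1},\alpha\mu\kappa_f^{-1/2})$ out front and applying one final Cauchy--Schwarz in $\mathbb{R}^4$ to the four resulting norm components (which precisely reassemble $\|\boldsymbol{u}_h\|_{Z_h}$ and $\|\boldsymbol{v}_h\|_{Z_h}$) yields the claim. The entire argument is mechanical; the only mildly delicate point is extracting the sharper constant $\max(1+C_{\rm tr},\beta+C_{\rm tr})$ from the consistency-symmetry cross terms rather than the coarser $1+2C_{\rm tr}+\beta$ obtained by naively summing absolute values, for which the spectral-norm observation about $M$ is the key.
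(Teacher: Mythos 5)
Your proof is correct and follows essentially the same route as the paper's: term-by-term Cauchy--Schwarz, the discrete trace inequality applied to the consistency/symmetry face terms, and a final aggregation into a product of norms, arriving at the identical constants $C_c^s=2\mu\max(1+C_{\rm tr},\beta+C_{\rm tr})$ and $C_c=\max(C_c^s,\kappa_f^{-1},\kappa_m^{-1},\alpha\mu\kappa_f^{-1/2})$. The only (cosmetic) difference is the last step for $a_h^s$: the paper regroups the four terms by a weighted Cauchy--Schwarz into factors of the form $\bigl((1+C_{\rm tr})\|\epsilon(w_h)\|_{\Omega^s}^2+(\beta+C_{\rm tr})\sum_{K}h_K^{-1}\|w_h-\bar{w}_h\|_{\partial K}^2\bigr)^{1/2}$, whereas you bound the spectral norm of the matrix $M$ by its maximum row sum --- both arguments produce exactly the factor $\max(1+C_{\rm tr},\beta+C_{\rm tr})$.
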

\begin{proof}
  The proof is similar to the proof of \cite[Lemma
  3]{Cesmelioglu:2020}. We start by proving
  \cref{eq:ahsboundedness}. Let
  $\boldsymbol{u}_h, \boldsymbol{v}_h \in \boldsymbol{Z}_h$. By the
  Cauchy--Schwarz inequality,
  \begin{equation}
    \label{eq:ahsboundednessstep1}
    \begin{split}
      a_h^s&(\boldsymbol{u}_h,\boldsymbol{v}_h)
      \leq 2\mu\big( \|\epsilon(u_h)\|_{\Omega^s}\|\epsilon({v}_h)\|_{\Omega^s}
      +\sum_{K\in\mathcal{T}^s} \|\epsilon(u_h)\|_{\partial K}\|v_h-\Bar{v}_h\|_{\partial K}
      \\
      &+\sum_{K\in\mathcal{T}^s} \|\epsilon(v_h)\|_{\partial K}\|u_h-\bar{u}_h\|_{\partial K}
      +\sum_{K\in \mathcal{T}^s}\beta h_K^{-1}\|u_h-\Bar{u}_h\|_{\partial K}\|v_h-\Bar{v}_h\|_{\partial K}\big).      
    \end{split}
  \end{equation}
  Note that by the discrete trace inequality \cite[Lemma
  1.46]{DiPietro:book} and the Cauchy--Schwarz inequality,
  \begin{equation}
    \label{eq:ahsboundednessstep2}
    \begin{split}
      &\sum_{K\in\mathcal{T}^s}
       \|\epsilon(u_h)\|_{\partial K}\|v_h-\Bar{v}_h\|_{\partial K}
      +\sum_{K\in\mathcal{T}^s} \|\epsilon(v_h)\|_{\partial K}\|u_h-\bar{u}_h\|_{\partial K}
      \\
      &+\sum_{K\in \mathcal{T}^s}\beta h_K^{-1}\|u_h-\Bar{u}_h\|_{\partial K}\|v_h-\Bar{v}_h\|_{\partial K}
      \\
      \leq
      & \sum_{K\in\mathcal{T}^s} C_{\text{tr}} \|\epsilon(u_h)\|_{K} h_K^{-1/2}\|v_h-\Bar{v}_h\|_{\partial K} 
      +\sum_{K\in\mathcal{T}^s} C_{\text{tr}} \|\epsilon(v_h)\|_{K}h_K^{-1/2}\|u_h-\Bar{u}_h\|_{\partial K}
      \\
      & +\sum_{K\in \mathcal{T}^s}\beta h_K^{-1/2}\|u_h-\Bar{u}_h\|_{\partial K}h_K^{-1/2}\|v_h-\Bar{v}_h\|_{\partial K}
      \\
      \leq
      & C_{\text{tr}}\|\epsilon(u_h)\|_{\Omega^s} (\sum_{K\in\mathcal{T}^s}h_K^{-1}\|v_h-\Bar{v}_h\|^2_{\partial K})^{\frac{1}{2}}
      + C_{\text{tr}} \|\epsilon(v_h)\|_{\Omega^s}(\sum_{K\in\mathcal{T}^s}h_K^{-1}\|u_h-\Bar{u}_h\|^2_{\partial K})^{\frac{1}{2}} 
      \\
      &+\beta (\sum_{K\in \mathcal{T}^s}h_K^{-1}\|u_h-\Bar{u}_h\|_{\partial K}^2)^{1/2}
      (\sum_{K\in \mathcal{T}^s}h_K^{-1}\|v_h-\Bar{v}_h\|^2_{\partial K})^{1/2}\big).      
    \end{split}
  \end{equation}
  Combining \cref{eq:ahsboundednessstep1,eq:ahsboundednessstep2} and
  using the Cauchy--Schwarz inequality once more, we get 
  \begin{align*}
    &a_h^s(\boldsymbol{u}_h,\boldsymbol{v}_h)
    \\
    &\leq 2\mu \big((1+C_{\text{tr}})\|\epsilon(u_h)\|^2_{\Omega^s}+ (\beta+C_{\text{tr}})\sum_{K\in\mathcal{T}^s}h_K^{-1}\|u_h-\Bar{u}_h\|^2_{\partial K}\big)^{\tfrac12}
    \\
    &\quad \times\big((1+C_{\text{tr}})\|\epsilon(v_h)\|^2_{\Omega^s}+ (\beta+C_{\text{tr}})\sum_{K\in\mathcal{T}^s}h_K^{-1}\|v_h-\Bar{v}_h\|^2_{\partial K}\big)^{\tfrac12}
    \leq C_c^s \|\boldsymbol{u}_h\|_{V_h^s}\|\boldsymbol{v}_h\|_{V_h^s}.
  \end{align*}
  To prove \cref{eq:ahboundedness} we use \cref{eq:ahsboundedness} and
  the Cauchy--Schwarz inequality to find:
  \begin{align*}
    a_h(\boldsymbol{u}_h,&\boldsymbol{v}_h)
    = a_h^s(\boldsymbol{u}_h,\boldsymbol{v}_h) + a_h^d(u_h,v_h)+ a_h^m(u_h^m,v_h^m)+ a_h^I(\bar{u}_h,\bar{v}_h) 
    \\
    \leq
     & C_c^s \|\boldsymbol{u}_h\|_{V_h^s}\|\boldsymbol{v}_h\|_{V_h^s}+\kappa_f^{-1}\|u_h\|_{\Omega^d}\|v_h\|_{\Omega^d}
       +\kappa_m^{-1}\|u_h^m\|_{\Omega^d}\|v_h^m\|_{\Omega^d}
       \\
     &  +\alpha \mu \kappa_f^{-1/2}\|\Bar{u}_h^t\|_{\Gamma^I}\|\Bar{v}_h^t\|_{\Gamma^I}.
    \\
    \leq
     &\max(C_c^s,\kappa_f^{-1},\kappa_m^{-1},\alpha \mu \kappa_f^{-1/2})
       \big(\|\boldsymbol{u}_h\|_{V_h^s}^2+\|u_h\|^2_{\Omega^d}
       +\|u_h^m\|^2_{\Omega^d} + \|\Bar{u}_h^t\|^2_{\Gamma^I}\big)^{1/2}
    \\
     & \qquad\times \big(\|\boldsymbol{v}_h\|_{V_h^s}^2+\|v_h\|^2_{\Omega^d}
       +\|v_h^m\|^2_{\Omega^d}+\|\Bar{v}_h^t\|^2_{\Gamma^I}\big)^{1/2}.
    \\
     \leq & C_c\|\boldsymbol{u}_h\|_{Z_h}\|\boldsymbol{v}_h\|_{Z_h}.
  \end{align*}
\end{proof}
\begin{remark} 
  \label{rem:extendedboundedness}
  If the first component of $a_h$ belongs to
  $\boldsymbol{Z}_h+(\widetilde{V}\times \bar{\widetilde{V}}\times \widetilde{V}^m)$, where 
  \begin{align*}
    \widetilde{V}
   := &\{ v\in H(\text{div},\Omega): \, v^s\in [H^2(\Omega)]^{\text{dim}}, \,
      v^d\in [H^1(\Omega^d)]^{\text{dim}}, 
      \\
      &\hspace{3.2cm} v=0 \text{ on } \Gamma^s, \, v\cdot n=0 \text{ on } \Gamma^d\},
    \\
    \widetilde{V}^m
    :=&\{ v^m \in [H^1(\Omega^d)]^{\text{dim}}: \, v^m \cdot n = 0 \text{ on } \partial \Omega^d\},
  \end{align*}
  and $\bar{\widetilde{V}}$ is the trace space of $\widetilde{V}$  on $\Gamma_0^s$, the inequalities in
  \Cref{lem:Continuity of a} become
  \begin{equation*}
    a_h^s(\boldsymbol{u},\boldsymbol{v}_h) \leq C_c^{s,*}\|\boldsymbol{u}\|_{V_h^{s,*}}\|\boldsymbol{v}_h\|_{V_h^s},
    \qquad
    a_h(\boldsymbol{u},\boldsymbol{v}_h)\leq C_c^*\|\boldsymbol{u}\|_{\boldsymbol{Z}_h^*}\|\boldsymbol{v}_h\|_{Z_h},
  \end{equation*}
  for all
  $\boldsymbol{u} \in \boldsymbol{Z}_h+(\widetilde{V}\times
  \bar{\widetilde{V}}\times \widetilde{V}^m)$ and
  $\boldsymbol{v}_h\in \boldsymbol{Z}_h$, with slightly different
  constants $C_c^{s,*}$ and $C_c^*$ due to the use of a continuous
  trace inequality instead of a discrete one. The dependence of the
  constants on the problem parameters, however, stays the same.
\end{remark}

To prove an inf-sup condition, we introduce the following
interpolation operators (see, e.g., \cite[Lemma 7]{Hansbo:2002} or
\cite[(2.5.30)]{Boffi:book}):
\begin{lemma}
  \label{lem:interpolation}
  There exist interpolation operators
  $\Pi_V: H_0(\mathrm{div},\Omega) \mapsto V_h \cap
  H_0(\mathrm{div},\Omega)$,
  $\Pi_{V}^m: H_0(\mathrm{div}, \Omega^d) \mapsto V_h^m \cap
  H_0(\mathrm{div},\Omega^d)$ such that for all
  $u\in [H^{k+1}(K)]^{\mathrm{dim}}$, $K\in \mathcal{T}^d$, and
  $u^m\in [H^{k+1}(K)]^{\mathrm{dim}}$, $K\in \mathcal{T}^d$ the
  following hold:
  \begin{enumerate}
  \item $( q,\nabla \cdot (u-\Pi_Vu))_K =0$ for all $q\in P_{k-1}(K)$,
    $K\in \mathcal{T}$.
  \item $( q^m,\nabla \cdot (u^m-\Pi_{V}^mu^m))_K=0$ for all
    $q\in P_{k-1}(K)$, $K\in \mathcal{T}^d$.
  \item $\langle \bar{q},(u-\Pi_V u)\cdot n \rangle_F=0$ for all
    $\bar{q}\in P_k(F)$ , $F\in \mathcal{F}$.
  \item $\langle \bar{q}^m,(u^m-\Pi_V u^m)\cdot n \rangle_F=0$ for all
    $\bar{q}^m\in P_k(F)$ , $F\in \mathcal{F}^d$.
  \item $\|u-\Pi_Vu\|_{p,K} \leq C h_K^{l-p}|u|_{l,K}$ with $p=0,1,2$
    and $\max(1,p)\leq l \leq k+1$, $K\in \mathcal{T}$.
  \item $\|u^m-\Pi_{V}^mu^m\|_{p,K} \leq C h_K^{l-p}|u^m|_{l,K}$ with
    $p=0,1,2$ and $\max(1,p)\leq l \leq k+1$, $K\in \mathcal{T}^d$.
  \item
    $\|\nabla \cdot (u-\Pi_Vu)\|_K \leq C h_K^{l}|\nabla \cdot
    u|_{\ell,K}$ with $l \leq k+1$, $K\in \mathcal{T}$.
  \item
    $\|\nabla \cdot (u^m-\Pi_{V}^mu^m)\|_K \leq C h_K^{l}|\nabla \cdot
    u^m|_{\ell,K}$ with $l \leq k+1$, $K\in \mathcal{T}^d$.
  \end{enumerate}
\end{lemma}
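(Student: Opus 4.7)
The plan is to construct $\Pi_V$ and $\Pi_V^m$ element by element using the classical BDM (Brezzi--Douglas--Marini) interpolant of degree $k$, applied on $\mathcal{T}$ and $\mathcal{T}^d$ respectively, and then to verify that the asserted properties are nothing more than a repackaging of standard BDM results found in, e.g., \cite{Boffi:book} and \cite{Hansbo:2002}.

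First I would recall that on a single simplex $K$ the BDM degrees of freedom are (i) the moments of the normal trace against $P_k(F)$ on each face $F\subset \partial K$, and (ii) the interior moments against $[P_{k-1}(K)]^{\mathrm{dim}}$ (plus, in the appropriate way for $k\geq 2$, moments against a complement). I would define $\Pi_V u$ on each $K\in\mathcal{T}$ by matching these degrees of freedom for $u$, and similarly $\Pi_V^m u^m$ on each $K\in\mathcal{T}^d$. Because the normal-trace degrees of freedom agree on shared faces, $\Pi_V u$ is automatically $H(\mathrm{div})$-conforming; because $u\in H_0(\mathrm{div},\Omega)$ has vanishing normal trace on $\partial\Omega$, the facewise moments on boundary faces vanish and $\Pi_V u \in V_h\cap H_0(\mathrm{div},\Omega)$. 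The same argument on $\Omega^d$ delivers $\Pi_V^m u^m\in V_h^m\cap H_0(\mathrm{div},\Omega^d)$.

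Next I would verify properties (1)--(4). For (3) and (4), the facewise normal-trace moment matches the definition of the BDM degrees of freedom on faces, so $(u-\Pi_V u)\cdot n$ is $L^2(F)$-orthogonal to $P_k(F)$ by construction; likewise on $\mathcal{F}^d$ for $\Pi_V^m$. For (1) and (2), I would use integration by parts: for $q\in P_{k-1}(K)$,
\begin{equation*}
( q,\nabla\cdot(u-\Pi_V u))_K = -(\nabla q, u-\Pi_V u)_K + \langle q, (u-\Pi_V u)\cdot n\rangle_{\partial K},
\end{equation*}
and both terms vanish: the interior term because $\nabla q \in [P_{k-2}(K)]^{\mathrm{dim}}\subset [P_{k-1}(K)]^{\mathrm{dim}}$ is an interior test function for BDM, and the boundary term by (3). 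The identical computation on $K\in\mathcal{T}^d$ gives (2).

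The approximation estimates (5)--(8) are then the classical Brezzi--Douglas--Marini error bounds on a shape-regular simplex: a Bramble--Hilbert / scaling argument on the reference element, using that $\Pi_V$ preserves $[P_k(K)]^{\mathrm{dim}}$, yields $\|u-\Pi_V u\|_{p,K}\leq Ch_K^{l-p}|u|_{l,K}$ for $\max(1,p)\leq l\leq k+1$ and $p=0,1,2$; the divergence estimates (7)--(8) follow either directly from BDM theory or, more elegantly, from the commuting diagram property, since $\nabla\cdot\Pi_V u = \Pi_Q(\nabla\cdot u)$ on each $K$ (with $\Pi_Q$ the $L^2$-projection onto $P_{k-1}$, as guaranteed by (1)), so that standard $L^2$-projection estimates give the stated bounds. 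The corresponding statements for $\Pi_V^m$ on $\mathcal{T}^d$ are identical. There is no real obstacle here; the only mild subtlety is bookkeeping so that $\Pi_V$ respects the boundary condition $v\cdot n=0$ on $\partial\Omega$, which is handled by noting that this condition is encoded in the facewise normal-trace degrees of freedom that the BDM interpolant matches exactly.
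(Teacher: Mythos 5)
First, note that the paper itself does not prove \Cref{lem:interpolation}: the lemma is stated as a collection of known facts about the BDM interpolant, with pointers to \cite[Lemma 7]{Hansbo:2002} and \cite[(2.5.30)]{Boffi:book}. Your proposal reconstructs precisely that standard theory, and its architecture is the right one: the facial normal-moment degrees of freedom give (3)--(4); integration by parts plus interior orthogonality gives (1)--(2); the Bramble--Hilbert/scaling argument with reproduction of $[P_k(K)]^{\mathrm{dim}}$ gives (5)--(6); and the commuting property $\nabla\cdot\Pi_V u=\Pi_Q(\nabla\cdot u)$ reduces (7)--(8) to $L^2$-projection estimates. In substance you supply the proof that the paper delegates to the literature.

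There is, however, one concrete error in your construction: the BDM interior degrees of freedom are misstated. For the local space $[P_k(K)]^{\mathrm{dim}}$, interior moments against all of $[P_{k-1}(K)]^{\mathrm{dim}}$ are the Raviart--Thomas interior degrees of freedom, and adding them (let alone ``a complement'') to the facial moments against $P_k(F)$ over-determines the space: in two dimensions with $k=1$, $[P_1(K)]^2$ has dimension $6$ and the three facial moment sets already supply $6$ conditions, so there is no room for any interior degrees of freedom at all. The correct interior degrees of freedom for the BDM element are moments against $\nabla P_{k-1}(K)$ together with moments against the divergence-free bubble space $\{\psi\in[P_k(K)]^{\mathrm{dim}}:\ \nabla\cdot\psi=0,\ \psi\cdot n=0\ \text{on}\ \partial K\}$. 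Fortunately, the only interior orthogonality your argument ever invokes is $(u-\Pi_V u,\nabla q)_K=0$ for $q\in P_{k-1}(K)$, and this is exactly what the first family of genuine interior degrees of freedom provides, so your integration-by-parts proof of (1)--(2), and everything downstream of it, survives verbatim once the list of degrees of freedom is corrected. One further remark, in your favour: the commuting-projection argument you give for (7)--(8) yields the stated estimate for $l\le k$ only, since the divergence of $[P_k(K)]^{\mathrm{dim}}$ lands in $P_{k-1}(K)$, whose $L^2$-projection cannot deliver order $k+1$; the range $l\le k+1$ written in the lemma is a slip in the paper's statement, not something your (or any) proof could attain.
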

Furthermore, we denote the $L^2$-projection onto $\bar{V}_h$ by
$\bar{\Pi}_V$. For $v\in [H^l(K)]^{\mathrm{dim}}$, $1\leq l \leq k+1$,
we have:
\begin{subequations}
  \label{eq:barPiVestimate}
  \begin{align}
    \|v-\Bar{\Pi}_V v\|_{\partial K}
    &\leq C h_K^{l-1/2}\|v\|_{H^l(K)},
    \\
    \|\Pi_V v-\Bar{\Pi}_V v\|_{\partial K}
    &\leq C h_K^{l-1/2}\|v\|_{H^l(K)}.
  \end{align}
\end{subequations}
Let us define the space
\begin{equation*}
    \boldsymbol{Z}_h^0 := \big\{\boldsymbol{v}_h\in \boldsymbol{Z}_h:
    b_h((0,\bar{q}_h^s,\bar{q}_h^d,0,\bar{q}_h^m),\boldsymbol{v}_h) = 0
    \quad
    \forall (\bar{q}_h^s,\bar{q}_h^d,\bar{q}_h^m)\in \bar{Q}_h^s\times\bar{Q}_h^d\times \bar{Q}_h^m \big\}.      
\end{equation*}
We now prove the following inf-sup condition.
\begin{lemma}
  \label{lem:infsup cond of b_1}
  There exists a constant $C > 0$, independent of $h$, such that for
  any $(q_h,q_h^m)\in Q_h\times Q_h^m$,
  \begin{equation}
    \label{eq:b1infsup}
    C (\|q_h\|_{\Omega}^2+\|q_h^m\|^2_{\Omega^d})^{1/2}
    \leq \sup_{\boldsymbol{v}_h\in {\boldsymbol{Z}_h^0},\boldsymbol{v}_h\neq 0}
    -\frac{\sum_{j=s,d}( q_h,\nabla \cdot v_h)_{\Omega^j}+( q_h^m,\nabla \cdot v_h^m)_{\Omega^d}}{\|\boldsymbol{v}_h\|_{Z_h}}.
  \end{equation}
\end{lemma}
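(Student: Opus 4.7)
The plan is to reduce the discrete inf-sup to the continuous one of \Cref{lem:b-infsup} via the commuting-type interpolation operators $\Pi_V$, $\Pi_V^m$ of \Cref{lem:interpolation} together with the $L^2$-projection $\bar{\Pi}_V$ onto $\bar{V}_h$. Given $(q_h, q_h^m) \in Q_h \times Q_h^m \subset L^2_0(\Omega)\times L^2_0(\Omega^d)$, I would first produce continuous Fortin-type lifts exactly as in the proof of \Cref{lem:b-infsup}: a $v \in H^1_0(\Omega)$ with $\nabla\cdot v = -q_h$ and $\|v\|_{H^1(\Omega)} \leq C\|q_h\|_\Omega$, and a $v^m \in H_0(\mathrm{div}, \Omega^d)$ with $\nabla \cdot v^m = -q_h^m$ (taken with enough elementwise smoothness to apply $\Pi_V^m$, e.g.~via a Bogovski\u{i}-type construction) and $\|v^m\|_{H(\mathrm{div},\Omega^d)} \leq C\|q_h^m\|_{\Omega^d}$. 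Then I define the discrete candidate $\boldsymbol{v}_h := (\Pi_V v,\, \bar{\Pi}_V v,\, \Pi_V^m v^m)$.

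The first task is to verify that $\boldsymbol{v}_h \in \boldsymbol{Z}_h^0$. Expanding $b_h((0,\bar{q}_h^s,\bar{q}_h^d,0,\bar{q}_h^m),\boldsymbol{v}_h)$ and splitting each $\langle\cdot,\cdot\rangle_{\partial\mathcal{T}^j}$ sum into interior, boundary ($\Gamma^j$), and interface ($\Gamma^I$) contributions, the interior facet terms vanish because $\Pi_V v \in H_0(\mathrm{div},\Omega)$ and $\Pi_V^m v^m \in H_0(\mathrm{div},\Omega^d)$ force the normal-trace jumps to be zero; the boundary terms on $\Gamma^d$ vanish for the same reason. On $\Gamma^I$, property~(3) of \Cref{lem:interpolation} gives $\langle \bar{q}_h^j, \Pi_V v \cdot n\rangle_F = \langle \bar{q}_h^j, v\cdot n\rangle_F$, and since $\bar{\Pi}_V$ is the elementwise $L^2$-projection and $n$ is constant on each (straight) face, $\langle \bar{q}_h^j, \bar{\Pi}_V v\cdot n\rangle_F = \langle \bar{q}_h^j, v\cdot n\rangle_F$ as well; the $\Gamma^I$ contributions therefore cancel exactly against the $b_h^{I,j}$ terms.

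The numerator evaluation is immediate: since $q_h\in P_{k-1}(K)$ elementwise, property~(1) of \Cref{lem:interpolation} yields $(q_h, \nabla\cdot \Pi_V v)_K = (q_h, \nabla\cdot v)_K = -(q_h,q_h)_K$ on each $K\in\mathcal{T}$, and analogously for $q_h^m$ via property~(2). Summing gives
\[
-\sum_{j=s,d}(q_h, \nabla\cdot v_h)_{\Omega^j} - (q_h^m, \nabla\cdot v_h^m)_{\Omega^d} = \|q_h\|_\Omega^2 + \|q_h^m\|_{\Omega^d}^2.
\]

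The remaining work is the denominator bound $\|\boldsymbol{v}_h\|_{Z_h} \leq C(\|q_h\|_\Omega^2+\|q_h^m\|_{\Omega^d}^2)^{1/2}$. The symmetric-gradient part $\|\epsilon(\Pi_V v)\|_{\Omega^s}$ is controlled through property~(5) with $l=1$; the jump seminorm $\sum_{K\in\mathcal{T}^s} h_K^{-1}\|\Pi_V v - \bar{\Pi}_V v\|^2_{\partial K}$ is exactly what \cref{eq:barPiVestimate} controls with $l=1$, yielding $C\|v\|_{H^1(\Omega^s)}^2 \leq C\|q_h\|_\Omega^2$. The $H(\mathrm{div},\Omega^d)$ norms of $v_h^d$ and $v_h^m$ use properties~(5)--(8) together with the fact that $\nabla\cdot\Pi_V v = -q_h$ on each element of $\mathcal{T}^d$ (since $q_h\in P_{k-1}$) and analogously for $v_h^m$. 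The tangential trace $\|\bar{v}_h^t\|_{\Gamma^I}$ is handled by a trace inequality applied to $\bar{\Pi}_V v$ combined again with \cref{eq:barPiVestimate}. The main technical obstacle is producing the pair $(v,v^m)$ at the continuous level with enough regularity for the BDM-type interpolations to be well-defined and with all constants $h$-independent; once this is arranged, the three ingredients above assemble directly into \cref{eq:b1infsup}.
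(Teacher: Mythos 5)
Your proposal is correct and follows essentially the same route as the paper's proof: lift $(q_h,q_h^m)$ to continuous fields via the standard inf-sup/Bogovskii construction, take the candidate $(\Pi_V v,\bar{\Pi}_V v,\Pi_V^m v^m)$, verify membership in $\boldsymbol{Z}_h^0$ using the facet-moment properties of $\Pi_V$ and the $L^2$-projection, compute the numerator exactly via the commuting properties, and bound the denominator by interpolation stability. The one point to tighten is the regularity of $v^m$: the paper takes $v^m\in[H^1_0(\Omega^d)]^{\mathrm{dim}}$ with $\|v^m\|_{H^1(\Omega^d)}\le C\|q_h^m\|_{\Omega^d}$ (exactly what your Bogovskii parenthetical provides), because $\Pi_V^m$ is neither well-defined nor stable on $H_0(\mathrm{div},\Omega^d)$ alone, so it is this $H^1$ bound---not the $H(\mathrm{div})$ bound you state---that must be carried into the denominator estimate.
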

\begin{proof}
  Let $q_h\in Q_h$ and $q_h^m\in Q_h^m$. Then, since
  $q_h\in L^2_0(\Omega)$ and $q_h^m\in L_0^2(\Omega^d)$, by the
  standard inf-sup condition, there exist
  $v \in [H^1_0(\Omega)]^{\text{dim}}$ and
  $v^m\in [H_0^1(\Omega^d)]^{\text{dim}}$ such that:
  \begin{align*}
    -\nabla\cdot v=&q_h \text{ in }\Omega,& C\|v\|_{H^1(\Omega)}&\leq \|q_h\|_{\Omega},
    \\
    -\nabla\cdot v^m=&q_h^m \text{ in }\Omega^d,& C\|v^m\|_{H^1(\Omega^d)}&\leq \|q_h^m\|_{\Omega^d}.
  \end{align*}
  By \Cref{lem:interpolation}, \cref{eq:barPiVestimate}, and
  \cite[(1.24)]{Girault:2009},
  \begin{align*}
    \|(\Pi_V v,\bar{\Pi}_V v,\Pi_{V}^mv^m)\|^2_{Z_h}=
    &\|\epsilon(\Pi_V v)\|^2_{\Omega^s}+\sum_{K\in \mathcal{T}^s} h_K^{-1}\|\Pi_Vv-\bar{\Pi}_V v\|_{\partial K}^2
    \\
    &+\|\Pi_V v\|^2_{H({\rm div};\Omega^d)}+\|\Pi_{V}^mv^m\|^2_{H({\rm div};\Omega^d)}+\|\Bar{\Pi}_V (v^s)^t\|_{\Gamma^I}^2
    \\
    \leq
    & C(\|v\|^2_{1,\Omega^s}+\|v^m\|^2_{1,\Omega^d}+\|v\|^2_{1,\Omega^d}).
  \end{align*}
  Observe also that
  $(\Pi_V v,\bar{\Pi}_V v,\Pi_{V}^mv^m)\in \boldsymbol{Z}_h^0$ by the
  fact that $\bar{q}_h^j$, $j=s,d,m$ is single-valued and the
  properties of $\Pi_V$ and $\bar{\Pi}_V$. Therefore,
  \begin{align*}
    \sup_{\boldsymbol{v}_h\in \boldsymbol{Z}_h^0,\boldsymbol{v}_h\neq \boldsymbol{0}}
    -&\frac{\sum_{j=s,d} ( q_h,\nabla \cdot v_h)_{\Omega^j}+( q_h^m,\nabla \cdot v_h^m)_{\Omega^d}}{\|\boldsymbol{v}_h\|_{Z_h}}
    \\
     &\geq  \dfrac{\sum_{i=s,d} ( q_h,\nabla \cdot \Pi_V v)_{\Omega^j}+( q_h^m,\nabla \cdot \Pi_{V}^mv^m)_{\Omega^d}}
       {\|(\Pi_V v,\Bar{\Pi}_V v ,\Pi_{V}^m v^m)\|_{Z_h}}
    \\
     &\geq C \dfrac{\|q_h\|^2_{\Omega}+\|q_h^m\|_{\Omega^d}^2}{(\|v\|^2_{1,\Omega^s}+\|v\|^2_{1,\Omega^d}+\|v^m\|^2_{1,\Omega^d})^{1/2}}
    \\
     &\geq C (\|q_h\|^2_{\Omega}+\|q_h^m\|^2_{\Omega^d})^{1/2}.
  \end{align*}
\end{proof}
\begin{lemma}
 \label{lem:infsup cond of b2}
 There exists a constant $C > 0$, independent of $h$, such that for
 any
 $(\bar{q}_h^s,\bar{q}_h^d,\bar{q}_h^m)\in \bar{Q}_h^s\times
 \bar{Q}_h^d \times \bar{Q}_h^d$,
 \begin{multline}
   \label{eq:b2infsup}
   C\Big(\sum_{j=s,d}\sum_{K\in\mathcal{T}^j}h_K\|\bar{q}_h^j\|^2_{\partial K}+\sum_{K\in\mathcal{T}^d}h_K\|\bar{q}_h^m\|^2_{\partial K}\Big)^{1/2}
   \\
   \leq \sup_{\boldsymbol{v}_h \in \boldsymbol{Z}_h,\boldsymbol{v}_h\neq \boldsymbol{0}}
   \frac{\sum_{j=s,d}(\langle \bar{q}_h^j, v_h\cdot n \rangle_{\partial \mathcal{T}^j}
     +b_h^{I,j}(\bar{q}_h^j,\bar{v}_h))+\langle \bar{q}_h^m, v_h^m\cdot n \rangle_{\partial \mathcal{T}^d}}
   {\|\boldsymbol{v}_h\|_{Z_h}}.
 \end{multline}
\end{lemma}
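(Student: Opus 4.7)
The plan is to exhibit an explicit test function $\boldsymbol{v}_h = (v_h, \bar{v}_h, v_h^m) \in \boldsymbol{Z}_h$ whose numerator equals the target weighted sum and whose $Z_h$-norm is controlled by its square root. I set $\bar{v}_h = 0$, which immediately kills the trace contribution $\|\bar{v}_h^t\|^2_{\Gamma^I}$ to $\|\boldsymbol{v}_h\|_{Z_h}$ as well as the interface terms $b_h^{I,j}(\bar{q}_h^j,\bar{v}_h)$ in the numerator. On each element I then construct $v_h$ (resp.\ $v_h^m$) as a local polynomial whose normal trace on $\partial K$ is proportional to the corresponding facet function.

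The key tool is a standard polynomial right-inverse of the normal trace: on the reference element $\hat{K}$ the map $[P_k(\hat{K})]^{\text{dim}} \to \prod_{F \subset \partial\hat{K}} P_k(F)$, $\hat{w}\mapsto \hat{w}\cdot\hat{n}$, is surjective with bounded right inverse by finite dimensionality, and an affine change of variables yields, for each $K$, a $w_K\in[P_k(K)]^{\text{dim}}$ with $w_K \cdot n^K = \bar{q}_h^j$ on $\partial K$ and $\|w_K\|_K \leq C h_K^{1/2}\|\bar{q}_h^j\|_{\partial K}$. I then set $v_h|_K := h_K w_K$ on every $K\in\mathcal{T}^s\cup\mathcal{T}^d$ (using $\bar{q}_h^s$ or $\bar{q}_h^d$ as appropriate), and analogously $v_h^m|_K$ on $K\in\mathcal{T}^d$ using $\bar{q}_h^m$. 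With this choice, on each $\partial K$ one has $\langle \bar{q}_h^j, v_h\cdot n^K\rangle_{\partial K} = h_K\|\bar{q}_h^j\|^2_{\partial K}$, so summing gives
\begin{equation*}
  \Sigma := \sum_{j=s,d}\sum_{K\in\mathcal{T}^j} h_K \|\bar{q}_h^j\|^2_{\partial K} + \sum_{K\in\mathcal{T}^d} h_K \|\bar{q}_h^m\|^2_{\partial K}
\end{equation*}
as the value of the numerator.

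For the denominator, the local bound $\|v_h\|_K \leq C h_K^{3/2}\|\bar{q}_h^j\|_{\partial K}$ combined with standard inverse and discrete trace inequalities yields $\|\epsilon(v_h)\|_K, \|\nabla\cdot v_h\|_K \leq C h_K^{1/2}\|\bar{q}_h^j\|_{\partial K}$ and $h_K^{-1}\|v_h\|^2_{\partial K}\leq C h_K\|\bar{q}_h^j\|^2_{\partial K}$, with analogous bounds for $v_h^m$. Summing these over elements (and using that $h_K$ is bounded by a fixed constant such as $\mathrm{diam}(\Omega)$ to absorb the extra $h_K^2$ factor in the $L^2$-norm contribution to the $H(\mathrm{div})$-norm) gives $\|\boldsymbol{v}_h\|_{Z_h}^2 \leq C\Sigma$, so that the ratio in the supremum is at least $\Sigma/(C\Sigma^{1/2}) = C^{-1}\Sigma^{1/2}$, which is the desired bound. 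The main technical ingredient is the polynomial extension with correct $h_K$-scaling; everything else is a routine application of Cauchy--Schwarz, inverse, and discrete trace estimates.
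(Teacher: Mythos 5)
Your proposal is correct and takes essentially the same route as the paper: both choose $\bar{v}_h=0$ (killing the interface terms and the $\Gamma^I$ contribution to the norm) and build the test function from an elementwise polynomial lifting whose normal trace on $\partial K$ equals $h_K\bar{q}_h^j$, then bound its $Z_h$-norm by $\bigl(\sum_K h_K\|\bar{q}_h^j\|^2_{\partial K}\bigr)^{1/2}$ so the quotient collapses to the desired lower bound. The only difference is presentational: the paper invokes the BDM local lifting operator (with all four scaling estimates stated) where you reconstruct it via a reference-element right inverse of the normal-trace map and recover the gradient and boundary bounds from inverse and discrete trace inequalities; just note that surjectivity of $\hat{w}\mapsto\hat{w}\cdot\hat{n}$ onto $\prod_{F\subset\partial\hat{K}}P_k(F)$ is exactly the BDM unisolvence fact, not a consequence of finite dimensionality alone, so it deserves a citation or a one-line argument rather than the phrase ``by finite dimensionality.''
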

\begin{proof}
  The proof is similar to that of \cite[Lemma 3]{Rhebergen:2018b}. We
  start by introducing an operator \cite[Proposition 2.10]{Du:book} to
  lift $\bar{q}_h^m\in \Bar{Q}_h^m$
    to $\Omega^d$ and $\bar{q}_h^j\in \Bar{Q}_h^j$ to $\Omega^j$,
    $j=s,d$. Let
  $R_k(\partial K):=\{\Bar{q}\in L^2(\partial K): \Bar{q}\in P_k(F),\
  \forall F \subset\partial K\}$ and let
  $L:R_k(\partial K) \mapsto [P_k(K)]^{\rm dim}$ be the BDM local
  lifting operator that satisfies for all
  $\bar{q}_h\in R_k(\partial K)$:
  \begin{subequations}
    \label{eq:lift}
    \begin{align}
      (L\Bar{q}_h)\cdot n&=h_K \Bar{q}_h \text{ on } \partial K,
      & \|L\Bar{q}_h\|_K &\leq C_0 h_K^{3/2}\|\Bar{q}_h\|_{\partial K},
      \\
      \|\nabla (L\Bar{q}_h)\|_{K}&\leq C_0h_K^{1/2} \|\Bar{q}_h\|_{\partial K},
      & \| L\Bar{q}_h\|_{\partial K}&\leq C_0 h_K \|\Bar{q}_h\|_{\partial K}. 
    \end{align}    
  \end{subequations}
  Here the constant $C_0 \ge 1$ only depends on the shape regularity
  of the mesh and the polynomial degree $k$. Using the same argument
  as in \cite[Lemma 6]{Cesmelioglu:2020}, we define
  \begin{equation*}
    L \bar{q}_h=
    \begin{cases}
      L \Bar{q}_h^s  & \forall K \in \mathcal{T}^s,\\
      L \Bar{q}_h^d  & \forall K \in \mathcal{T}^d. 
    \end{cases}
  \end{equation*}
  Then, $(L\bar{q}_h, 0, L\bar{q}_h^m)\in \boldsymbol{Z}_h$ and
  \begin{multline*}
    \sum_{j=s,d}(\langle \bar{q}_h^j, L \bar{q}_h\cdot n \rangle_{\partial \mathcal{T}^j}
    +b_h^{I,j}(\bar{q}_h^j,0))+\langle \bar{q}_h^m, L\bar{q}_h^m\cdot n \rangle_{\partial \mathcal{T}^d}
    \\
    = \sum_{j=s,d}\sum_{K\in\mathcal{T}^j}h_K\|\bar{q}_h^j\|^2_{\partial K}+\sum_{K\in\mathcal{T}^d}h_K\|\bar{q}_h^m\|^2_{\partial K}.
  \end{multline*}
  Furthermore, by \cref{eq:lift},
  \begin{align*}
    \|(L\Bar{q}_h,0,L\Bar{q}_h^m)\|_{Z_h}^2
    =& \|\epsilon(L\bar{q}_h^s)\|^2_{\Omega^s} + \sum_{K\in \mathcal{T}^s}h_K^{-1}\|L\bar{q}_h^s\|_{\partial K}^2
     \\
     &+\|L\bar{q}_h^d\|_{H(\rm div,\Omega^d)}^2+\|L\bar{q}_h^m\|_{H(\rm div,\Omega^d)}^2
    \\
    \leq& C_0^2 \Big(\sum_{j=s,d}\sum_{K\in \mathcal{T}^j}h_K\|\bar{q}_h^j\|_{\partial K}^2
      +\sum_{K\in \mathcal{T}^d}h_K\|\bar{q}_h^m\|_{\partial K}^2\Big).
  \end{align*}
  Therefore,
  \begin{align*}
    \sup_{\boldsymbol{v}_h \in \boldsymbol{Z}_h,\boldsymbol{v}_h\neq \boldsymbol{0}}
    & \frac{\sum_{j=s,d}(\langle \bar{q}_h^j, v_h\cdot n \rangle_{\partial \mathcal{T}^j}
      +b_h^{I,j}(\bar{q}_h^j,\bar{v}_h))+\langle \bar{q}_h^m, v_h^m\cdot n \rangle_{\partial \mathcal{T}^d}}{\|\boldsymbol{v}_h\|_{Z_h}}
    \\ 
    & \geq \frac{\sum_{j=s,d}\langle \bar{q}_h^j,L \Bar{q}_h^j \cdot n \rangle_{\partial \mathcal{T}^j}
      +\langle \bar{q}_h^m, L \Bar{q}_h^m\cdot n \rangle_{\partial \mathcal{T}^d}}{\|(L\Bar{q}_h,0,L\Bar{q}_h^m)\|_{Z_h}} 
    \\ 
    & \geq \frac{\sum_{j=s,d} \sum_{K\in \mathcal{T}^j}h_K\|\Bar{q}_h^j\|^2_{\partial K}
      +\sum_{K\in \mathcal{T}^d}h_K\|\Bar{q}_h^m\|^2_{\partial K}}
      {C_0\Big(\sum_{j=s,d}\sum_{K\in \mathcal{T}^j}h_K\|\bar{q}_h^j\|_{\partial K}^2
      +\sum_{K\in \mathcal{T}^d}h_K\|\bar{q}_h^m\|_{\partial K}^2\Big)^{1/2}}
    \\
    &= (C_0)^{-1}\Big(\sum_{j=s,d}\sum_{K\in\mathcal{T}^j}h_K\|\bar{q}_h^j\|^2_{\partial K}
      +\sum_{K\in\mathcal{T}^d}h_K\|\bar{q}_h^m\|^2_{\partial K}\Big)^{1/2}.
  \end{align*}
  The result follows with $C = (C_0)^{-1}$.
\end{proof}

The previous two lemmas are now used to prove the following main
inf-sup condition.

\begin{theorem}
  \label{thm:infsup condition b}
  There exists a constant $\beta_p>0$, independent of h, such that for
  all $\boldsymbol{p}_h \in \boldsymbol{P}_h$,
  \begin{equation}
    \label{eq:Stability of b}
    \beta_p\|\boldsymbol{p}_h\|_{P_h}
    \leq \sup_{\boldsymbol{v}_h\in \boldsymbol{Z}_h,\boldsymbol{v}_h\neq \boldsymbol{0}}
    \frac{\boldsymbol{b}_h(\boldsymbol{p}_h,\boldsymbol{v}_h)}{\|\boldsymbol{v}_h\|_{Z_h}}.
  \end{equation}
\end{theorem}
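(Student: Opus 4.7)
The plan is to combine \Cref{lem:infsup cond of b_1} and \Cref{lem:infsup cond of b2} by a standard Fortin-type argument: the first lemma controls the cell pressures $p_h, p_h^m$ through test functions in $\boldsymbol{Z}_h^0$ (where the facet pressures contribute nothing), while the second lemma controls the facet pressures $\bar{p}_h^s, \bar{p}_h^d, \bar{p}_h^m$ through general test functions in $\boldsymbol{Z}_h$. I would then form a convex combination of these two test functions, with a small parameter $\eta$ in front of the second, so that the cell-pressure contribution dominates and any unwanted interior cross term produced by the second test function can be absorbed.

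More precisely, given $\boldsymbol{p}_h \in \boldsymbol{P}_h$, I would first invoke \Cref{lem:infsup cond of b_1} to obtain $\boldsymbol{v}_h^{(1)} \in \boldsymbol{Z}_h^0$ satisfying
\begin{equation*}
-\sum_{j=s,d}(p_h,\nabla\cdot v_h^{(1)})_{\Omega^j}-(p_h^m,\nabla\cdot v_h^{(1),m})_{\Omega^d}
\;\geq\; C_1 \bigl(\|p_h\|_\Omega^2+\|p_h^m\|_{\Omega^d}^2\bigr),
\end{equation*}
with $\|\boldsymbol{v}_h^{(1)}\|_{Z_h}\leq (\|p_h\|_\Omega^2+\|p_h^m\|_{\Omega^d}^2)^{1/2}$. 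Since $\boldsymbol{v}_h^{(1)}\in \boldsymbol{Z}_h^0$, all terms of $b_h(\boldsymbol{p}_h,\boldsymbol{v}_h^{(1)})$ involving $\bar{p}_h^s,\bar{p}_h^d,\bar{p}_h^m$ vanish by definition of $\boldsymbol{Z}_h^0$, so the displayed expression equals $b_h(\boldsymbol{p}_h,\boldsymbol{v}_h^{(1)})$. Next, I would apply \Cref{lem:infsup cond of b2} to obtain $\boldsymbol{v}_h^{(2)}\in \boldsymbol{Z}_h$ with $\|\boldsymbol{v}_h^{(2)}\|_{Z_h}\leq T^{1/2}$, where $T:=\sum_{j=s,d}\sum_{K\in\mathcal{T}^j}h_K\|\bar{p}_h^j\|_{\partial K}^2+\sum_{K\in\mathcal{T}^d}h_K\|\bar{p}_h^m\|_{\partial K}^2$, such that the facet-piece of $b_h(\boldsymbol{p}_h,\boldsymbol{v}_h^{(2)})$ is bounded below by $C_2 T$.

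The only thing to watch is that $\boldsymbol{v}_h^{(2)}$ also generates an unsigned interior contribution $-\sum_{j=s,d}(p_h,\nabla\cdot v_h^{(2)})_{\Omega^j}-(p_h^m,\nabla\cdot v_h^{(2),m})_{\Omega^d}$, which by Cauchy--Schwarz and the definition of $\|\cdot\|_{Z_h}$ is bounded in absolute value by $C\,(\|p_h\|_\Omega^2+\|p_h^m\|_{\Omega^d}^2)^{1/2} T^{1/2}$. Setting $\boldsymbol{v}_h:=\boldsymbol{v}_h^{(1)}+\eta\,\boldsymbol{v}_h^{(2)}$ then gives
\begin{equation*}
b_h(\boldsymbol{p}_h,\boldsymbol{v}_h) \geq C_1\bigl(\|p_h\|_\Omega^2+\|p_h^m\|_{\Omega^d}^2\bigr) + \eta\, C_2\, T - \eta\,C\,\bigl(\|p_h\|_\Omega^2+\|p_h^m\|_{\Omega^d}^2\bigr)^{1/2} T^{1/2}.
\end{equation*}
Applying Young's inequality to the cross term and then choosing $\eta>0$ small enough (but fixed, independent of $h$ and of $\boldsymbol{p}_h$) yields a bound of the form $b_h(\boldsymbol{p}_h,\boldsymbol{v}_h)\geq \tfrac{C_1}{2}(\|p_h\|_\Omega^2+\|p_h^m\|_{\Omega^d}^2)+\tfrac{\eta C_2}{2} T\geq C_3\|\boldsymbol{p}_h\|_{P_h}^2$.

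Finally, a triangle inequality combined with $\|\boldsymbol{v}_h^{(1)}\|_{Z_h}^2+\eta^2\|\boldsymbol{v}_h^{(2)}\|_{Z_h}^2\leq \|\boldsymbol{p}_h\|_{P_h}^2$ yields $\|\boldsymbol{v}_h\|_{Z_h}\leq C_4\,\|\boldsymbol{p}_h\|_{P_h}$, whence $b_h(\boldsymbol{p}_h,\boldsymbol{v}_h)/\|\boldsymbol{v}_h\|_{Z_h}\geq \beta_p\|\boldsymbol{p}_h\|_{P_h}$ with $\beta_p=C_3/C_4$, independent of $h$. The only genuinely delicate point is the choice of $\eta$: it must be small enough to absorb the unsigned cross term using the coercive quantities provided by both sub-inf-sups, and I expect this Young-type absorption to be the main technical step, though it is entirely routine once the two sub-inf-sup inequalities are in place.
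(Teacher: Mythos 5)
Your proposal is correct, and it rests on the same two ingredients as the paper: \Cref{lem:infsup cond of b_1} for the element pressures (tested in $\boldsymbol{Z}_h^0$, where the facet-pressure terms of $b_h$ drop out by definition) and \Cref{lem:infsup cond of b2} for the facet pressures. The difference lies in how the two are combined: the paper's entire proof is a one-line appeal to the abstract result \cite[Theorem 3.1]{Howell:2011}, which asserts that the pair \cref{eq:b1infsup,eq:b2infsup} is equivalent to \cref{eq:Stability of b}, whereas you carry out the combination by hand --- take $\boldsymbol{v}_h=\boldsymbol{v}_h^{(1)}+\eta\,\boldsymbol{v}_h^{(2)}$, bound the unsigned interior contribution of $\boldsymbol{v}_h^{(2)}$ by Cauchy--Schwarz (this works because $\|\nabla\cdot v\|_{\Omega^s}\le\sqrt{\mathrm{dim}}\,\|\epsilon(v)\|_{\Omega^s}$ elementwise and the $H(\mathrm{div})$ parts of $\|\cdot\|_{Z_h}$ control the remaining divergence terms), and absorb it via Young's inequality with a small fixed $\eta$. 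Your argument is, in essence, a self-contained proof of the relevant direction of the cited theorem; all steps check out, including the normalization of the two test functions and the final bound $\|\boldsymbol{v}_h\|_{Z_h}\le C\|\boldsymbol{p}_h\|_{P_h}$. What your route buys is transparency: the proof is elementary, stays entirely within the paper's notation, and makes explicit how $\beta_p$ depends on the constants of the two sub-inf-sup conditions. What the paper's route buys is brevity and generality: the citation disposes of the bookkeeping once and for all, and the cited theorem provides the full equivalence (both directions), not only the implication needed here.
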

\begin{proof}
  \Cref{eq:b1infsup,eq:b2infsup} are equivalent to \cref{eq:Stability
    of b}, see \cite[Theorem 3.1]{Howell:2011}.
\end{proof}

We end this section by proving well-posedness of the HDG method
\cref{eq:HDG-dualporosityproblem-18}.

\begin{theorem}
  If $\beta>\beta_0$, then the discrete problem
  \cref{eq:HDG-dualporosityproblem-18} is well-posed.
\end{theorem}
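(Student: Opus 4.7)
The plan is to apply the discrete generalized saddle-point theorem (the discrete analogue of \cite[Theorem 4.3.1]{Boffi:book}) to the system \cref{eq:HDG-dualporosityproblem-18} on $\boldsymbol{Z}_h \times \boldsymbol{P}_h$. Since these spaces are finite-dimensional and the system is square, well-posedness reduces to verifying the four structural hypotheses: continuity of $a_h$ on $\boldsymbol{Z}_h \times \boldsymbol{Z}_h$, continuity of $b_h$ on $\boldsymbol{Z}_h \times \boldsymbol{P}_h$, coercivity of $a_h$ on the discrete kernel of $b_h$, the discrete inf-sup condition on $b_h$, and symmetric positive semi-definiteness of $c_h$. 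The assumption $\beta > \beta_0$ is used precisely to guarantee the positivity of the coercivity constant $C_e$ from \Cref{lem:Coercivity a}, where $\beta_0$ can be taken as $C_{\text{tr}}^2$ so that $\mu(1-C_{\text{tr}}^2/\beta)>0$.

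Three of the required properties are immediate from the preceding work. Continuity of $a_h$ is exactly \Cref{lem:Continuity of a}; the inf-sup condition for $b_h$ is exactly \Cref{thm:infsup condition b}; continuity of $b_h$ follows from Cauchy--Schwarz together with the discrete trace inequality applied termwise to the definition of $b_h$, using that the norms $\|\cdot\|_{Z_h}$ and $\|\cdot\|_{P_h}$ were tailored to accommodate each volume and facet term. The symmetry, continuity, and positive semi-definiteness of $c_h$ are proved by the same algebraic computation used in \Cref{lem:c-scsd}, since $c_h$ has the same form as $c$. Finally, the kernel of $B_h^T$ is trivial because $Q_h \subset L^2_0(\Omega)$ and $Q_h^m \subset L^2_0(\Omega^d)$ already incorporate the zero-mean constraints.

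The main obstacle, and the step deserving care, is coercivity of $a_h$ on the discrete kernel
\begin{equation*}
  \text{Ker}\,B_h := \{\boldsymbol{v}_h \in \boldsymbol{Z}_h : b_h(\boldsymbol{q}_h,\boldsymbol{v}_h)=0 \text{ for all } \boldsymbol{q}_h\in \boldsymbol{P}_h\}.
\end{equation*}
\Cref{lem:Coercivity a} only delivers full coercivity in the $\|\cdot\|_{Z_h}$-norm when $\nabla\cdot v_h = 0$ and $\nabla\cdot v_h^m = 0$ in $\Omega^d$. The plan is to recover these divergence vanishing conditions from membership in $\text{Ker}\,B_h$ by the same test-function strategy as in \Cref{lem:Mass conservation}: choose $q_h|_{\Omega^d} = \nabla\cdot v_h^d$ (which lives in $Q_h^d$ since $\nabla\cdot v_h^d \in P_{k-1}(K)$ elementwise), and use the single-valuedness of the trace variables combined with the identities \cref{eq:Numerical properties a}--\cref{eq:Numerical properties c} implicit in the kernel definition to force $\nabla\cdot v_h^d = 0$ and $\nabla\cdot v_h^m = 0$ pointwise on $\Omega^d$, and $\nabla\cdot v_h^s = 0$ on $\Omega^s$. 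With these divergence conditions in hand, the second half of \Cref{lem:Coercivity a} applies and yields $a_h(\boldsymbol{v}_h,\boldsymbol{v}_h) \geq C_e\|\boldsymbol{v}_h\|_{Z_h}^2$ on $\text{Ker}\,B_h$.

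With all hypotheses verified, the discrete saddle-point theorem gives existence, uniqueness, and a stability estimate for \cref{eq:HDG-dualporosityproblem-18}, completing the proof.
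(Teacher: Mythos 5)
Your proposal is correct in substance, but it takes a genuinely different route from the paper. The paper exploits the fact that \cref{eq:HDG-dualporosityproblem-18} is a square, finite-dimensional linear system, so only uniqueness must be checked: set $f=0$, $g=0$, test with $(\boldsymbol{u}_h,-\boldsymbol{p}_h)$ to get $0=a_h(\boldsymbol{u}_h,\boldsymbol{u}_h)+c_h(\boldsymbol{p}_h,\boldsymbol{p}_h)$, and invoke only the \emph{first} (weak-norm) inequality of \Cref{lem:Coercivity a} --- the one that requires no divergence constraints --- together with positive semidefiniteness of $c_h$ to conclude $\boldsymbol{u}_h=\boldsymbol{0}$ and $p_h=p_h^m$. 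The momentum equation then collapses to $b_h(\boldsymbol{p}_h,\boldsymbol{v}_h)=0$ for all $\boldsymbol{v}_h\in\boldsymbol{Z}_h$, and \Cref{thm:infsup condition b} forces $\boldsymbol{p}_h=\boldsymbol{0}$. This completely sidesteps the step you single out as the main obstacle: coercivity of $a_h$ on the discrete kernel is never needed, and neither is any continuity bound for $b_h$. Your route --- verifying the hypotheses of the abstract saddle-point-with-penalty theorem --- mirrors the paper's treatment of the continuous problem and additionally yields a discrete stability estimate, but it pays for this with the kernel characterization.

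Two points in your argument need repair, though neither is fatal. First, the test function you propose, $q_h|_{\Omega^d}=\nabla\cdot v_h^d$, must belong to $Q_h\subset L_0^2(\Omega)$, not merely to $Q_h^d$: the global zero-mean constraint is part of the test space, and an extension of $\nabla\cdot v_h^d$ by zero to $\Omega^s$ need not satisfy it, since $\int_{\Omega^d}\nabla\cdot v_h^d\,dx=\int_{\Gamma^I}\bar{v}_h\cdot n\,ds$ is in general nonzero. The correct order of operations is the one your sketch only hints at: from the facet-multiplier part of the kernel condition first derive, exactly as in \Cref{lem:Mass conservation}, that $v_h$ is normal-continuous across all facets with $v_h\cdot n=0$ on $\partial\Omega$, and that $v_h^m\cdot n$ is continuous with $v_h^m\cdot n=0$ on $\partial\Omega^d$; the divergence theorem then gives $\int_{\Omega}\nabla\cdot v_h\,dx=0$ and $\int_{\Omega^d}\nabla\cdot v_h^m\,dx=0$, so that $\nabla\cdot v_h\in Q_h$ and $\nabla\cdot v_h^m\in Q_h^m$ are admissible, and testing with them yields the divergence-free property needed for the second inequality of \Cref{lem:Coercivity a}. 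Second, your claim that continuity of $b_h$ holds ``because the norms were tailored to accommodate each volume and facet term'' is not accurate uniformly in $h$: the pairing $\langle \bar{q}_h^d, v_h\cdot n\rangle_{\partial\mathcal{T}^d}$ is controlled only via a discrete trace inequality and loses a factor $h^{-1}$, since $\|\cdot\|_{Z_h}$ contains no facet control of $v_h^d$. This is harmless for the statement at hand --- on a fixed finite-dimensional space every bilinear form is bounded, and existence/uniqueness from the abstract theorem does not require an $h$-uniform continuity constant for $b_h$ --- but the justification should be finite-dimensionality, not the design of the norms.
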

\begin{proof}
  It is sufficient to show uniqueness. Letting $f=0$ and $g=0$ and
  choosing $\boldsymbol{v}_h=\boldsymbol{u}_h$ and
  $\boldsymbol{q}_h=-\boldsymbol{p}_h$ in
  \cref{eq:HDG-dualporosityproblem-18}, we obtain:
  \begin{equation*}
    0=a_h(\boldsymbol{u}_h,\boldsymbol{u}_h)+c(\boldsymbol{p}_h,\boldsymbol{p}_h).
  \end{equation*}
  By \Cref{lem:Coercivity a},
  \begin{equation*}
    C_e(\|\boldsymbol{u}_h\|_{V_h^s}^2
    +\|\bar{u}_h^t\|_{\Gamma^I}+\|u_h\|^2_{\Omega^d}+\|u_h^m\|^2_{\Omega^d}) + \sigma \kappa_m\|p_h^m-p_h\|_{\Omega^d}^2\leq 0.    
  \end{equation*}
  Therefore $p_h^m=p_h$ in $\Omega^d$ and
  $\boldsymbol{u}_h=\boldsymbol{0}$. Substituting these values in
  \cref{eq:HDG-dualporosityproblem-18}, we obtain:
  \begin{equation*}
    \boldsymbol{b}_h(\boldsymbol{p}_h,\boldsymbol{v}_h)=0
    \quad \forall \boldsymbol{v}_h\in \boldsymbol{Z}_h.    
  \end{equation*}
  Therefore, $\boldsymbol{p}_h=\boldsymbol{0}$ by \Cref{thm:infsup
    condition b}, concluding the proof.
\end{proof}

%---------------------------------------------------------------------
\section{Error Analysis}
\label{sec:error}

In this section, we present an a priori error analysis of the HDG
method in \cref{eq:HDG-dualporosityproblem-18}. For the analysis we
will use the BDM interpolation operator $\Pi_V$ as defined in
\Cref{lem:interpolation} and the $L^2$-projection operators
$\bar{\Pi}_V$ onto $\bar{V}_h$, $\Pi_Q$ onto $Q_h$, $\Pi_Q^m$ onto
$Q_h^m$, and $\bar{\Pi}_Q^j$ onto $\bar{Q}_h^j$, $j=s,d$. We have the
following standard estimates for $k\geq 0$ and $0\leq l \leq k$:
\begin{subequations}
  \label{eq:interpolation-est}
  \begin{align}
    \|q-\Pi_Q q\|_K & \leq Ch^l_K\|q\|_{l,K}&& \forall q \in H^l(K),
    \\
    \|q-\bar{\Pi}^d_Q q\|_{\partial K} & \leq Ch^{l+1/2}_K\|q\|_{l+1,K} && \forall q \in H^{l+1}(K).
  \end{align}
\end{subequations}
We define
$\boldsymbol{\Pi}u:=(\Pi_V {u},\bar{\Pi}_V u^s,\Pi_{V}^m{u^m})$,
$\boldsymbol{\Pi}p:=(\Pi_Q p,\bar\Pi_Q^s {p}^s,\bar\Pi_Q^d {p}^d,\Pi
_Q^m p^m,\bar\Pi_Q^d {p}^m)$ and introduce the following notation for
the errors:
\begin{align*}
  e^I_{u}&=u-\Pi_V u,  &  e^h_{u}&=u_h-\Pi_Vu 
  , &
  e^I_{u^m}&=u^m-\Pi_{V}^m u^m,  &  e^h_{u^m}&=u_h^m-\Pi_{V}^m u^m,
  \\
  e^I_{p}&=p-\Pi_Q p, & e^h_{p}&=p_h-\Pi_Q p ,
  &
  e^I_{p^m}&=p^m-\Pi_{Q}^mp^m,  &  e^h_{p^m}&=p_h^m-\Pi_{Q}^m p^m,
\end{align*}
and
\begin{align*}
  \bar{e}^I_{u^s}&=u^s|_{\Gamma_0^s}-\bar{\Pi}_V u^s,&     \bar{e}^h_{u^s}&=\bar{u}_h^s-\bar{\Pi}_V u^s,
  \\
  \bar{e}^I_{p^s} & = p^s|_{\Gamma_0^s}-\bar{\Pi}_Q^s p^s,  &\bar{e}^h_{p^s} & =p_h^s-\bar{\Pi}_Q^s p^s, 
  \\
  \bar{e}^I_{p^j}&=p^j|_{\Gamma_0^j}-\bar{\Pi}_Q^d p^j, & \bar{e}^h_{p^j}&=p^j_h-\bar{\Pi}_Q^d p^j, \quad j=d,m.
\end{align*}
We use the following compact notation:
\begin{align*}
  \boldsymbol{e}^I_u &=(e^I_{u},\bar e^I_{u},e^I_{u^m}),
  &
    \boldsymbol{e}^h_u&=(e^h_{u},\bar e^h_{u},e^h_{u^m}), 
  \\
  \boldsymbol{e}_p^I&=(e^I_{p},\bar{e}^I_{p^s},\bar{e}^I_{p^d},e^I_{p_m},\bar{e}^I_{p^m}),
  &
    \boldsymbol{e}_p^h&=(e^h_{p},\bar{e}^h_{p^s},\bar{e}^h_{p^d},e^h_{p_m},\bar{e}^h_{p^m}),
  \\
  \boldsymbol{e}^I_{p^j} &= (e^I_{p^j},\bar{e}^I_{p^j}),
  & \boldsymbol{e}^h_{p^j}&=(e^h_{p^j},\bar{e}^h_{p^j}),
\end{align*}
where $j=s,d,m$. From \cite[Lemmas 7 and 8]{Cesmelioglu:2020},
\begin{equation}
  \label{lem:estimates on xi u}
  \|\boldsymbol{e}^I_{u}\|_{V_h^{s,*}}\leq C h^{l-1} \|u\|_{l,\Omega^s}, \quad 2\leq l\leq k+1,
\end{equation}
and
\begin{equation}
  \label{lem:estimates on xi p}
  \|\boldsymbol{e}^I_{p^s}\|_{Q_h^j}\leq C h^l \|p^s\|_{l,\Omega^s},\quad 
  \|\boldsymbol{e}^I_{p^j}\|_{Q_h^j}\leq C h^l\|p^j\|_{l,\Omega^d},\quad  0\leq l\leq k, \quad j=d,m.
\end{equation}
The following lemma presents the error equations used to obtain our
error estimates in \Cref{thm:error} and \Cref{cor:apriori}.

\begin{lemma}[Error equation]
    \label{lem:reduced-error-equations}
    For any
    $(\boldsymbol{v}_h,\boldsymbol{q}_h) \in \boldsymbol{Z}_h \times
    \boldsymbol{P}_h$, the following holds: 
    \begin{subequations}
      \label{eq:reduced-Error-eqs}
      \begin{align}
        \label{eq:reduced-Error-eq-1}
        a_h(\boldsymbol{e}^h_u,\boldsymbol{v}_h)+ b_h(\boldsymbol{e}^h_p,\boldsymbol{v}_h)
        &=a_h^s(\boldsymbol{e}_{u}^I, \boldsymbol{v}_h^s) + a_h^d(e^I_{u},v_h^d)+a_h^m(e^I_{u^m},v_h^m),
        \\
        \label{eq:reduced-Error-eq-2}
        b_h(\boldsymbol{q}_h,\boldsymbol{e}^h_u)-c_h(\boldsymbol{e}^h_p,\boldsymbol{q}_h)
        &=0.
      \end{align} 
    \end{subequations}
\end{lemma}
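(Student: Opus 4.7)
The plan is a standard Galerkin--orthogonality derivation combined with careful accounting of which interpolation-error pieces vanish against the discrete test functions. First I would invoke consistency (\Cref{lem:consistency}) to assert that the exact solution, augmented by its traces on the mesh skeleton, satisfies the discrete system \eqref{eq:HDG-dualporosityproblem-18} as well. Subtracting the HDG equations for $(\boldsymbol{u}_h,\boldsymbol{p}_h)$ from these yields the Galerkin--orthogonality relations
\begin{equation*}
a_h(\boldsymbol{u}-\boldsymbol{u}_h,\boldsymbol{v}_h)+b_h(\boldsymbol{p}-\boldsymbol{p}_h,\boldsymbol{v}_h)=0,\qquad
b_h(\boldsymbol{q}_h,\boldsymbol{u}-\boldsymbol{u}_h)-c_h(\boldsymbol{p}-\boldsymbol{p}_h,\boldsymbol{q}_h)=0.
\end{equation*}
Writing $\boldsymbol{u}-\boldsymbol{u}_h=\boldsymbol{e}_u^I-\boldsymbol{e}_u^h$ and $\boldsymbol{p}-\boldsymbol{p}_h=\boldsymbol{e}_p^I-\boldsymbol{e}_p^h$ and moving the discrete errors to the left reduces the lemma to showing $b_h(\boldsymbol{e}_p^I,\boldsymbol{v}_h)=0$ and $a_h^I(\bar e_u^I,\bar v_h)=0$ for \eqref{eq:reduced-Error-eq-1}, together with $b_h(\boldsymbol{q}_h,\boldsymbol{e}_u^I)=0$ and $c_h(\boldsymbol{e}_p^I,\boldsymbol{q}_h)=0$ for \eqref{eq:reduced-Error-eq-2}.

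For the first pair I would expand $b_h(\boldsymbol{e}_p^I,\boldsymbol{v}_h)$ piece by piece. The volume contributions $(p-\Pi_Q p,\nabla\cdot v_h)_{\Omega^j}$ and $(p^m-\Pi_Q^m p^m,\nabla\cdot v_h^m)_{\Omega^d}$ vanish because $\nabla\cdot v_h$ and $\nabla\cdot v_h^m$ lie in $P_{k-1}(K)$ elementwise and $\Pi_Q,\Pi_Q^m$ are the corresponding $L^2$-projections. The facet terms $\langle p^j-\bar\Pi_Q^j p^j,v_h\cdot n\rangle_{\partial\mathcal{T}^j}$ and the interface terms $b_h^{I,j}(\bar e_{p^j}^I,\bar v_h)$ vanish because on each facet $v_h\cdot n$ and $\bar v_h\cdot n^j$ (note that $n^j$ is constant per facet) lie in $P_k(F)$, while $\bar\Pi_Q^j$ is the $L^2$-projection onto exactly that space. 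Next, $a_h^I(\bar e_u^I,\bar v_h)=\langle\alpha\mu\kappa_f^{-1/2}(\bar e_{u^s}^I)^t,\bar v_h^t\rangle_{\Gamma^I}$ vanishes because $\bar v_h^t$ is already tangential and belongs to $[P_k(F)]^{\text{dim}}$ on each facet of $\mathcal{F}^I$, so the componentwise $L^2$-orthogonality of $\bar\Pi_V$ against $[P_k(F)]^{\text{dim}}$ applies directly. What remains of $a_h(\boldsymbol{e}_u^I,\boldsymbol{v}_h)$ is then exactly the right-hand side of \eqref{eq:reduced-Error-eq-1}.

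For the second pair, the expansion of $b_h(\boldsymbol{q}_h,\boldsymbol{e}_u^I)$ splits into precisely the orthogonality conditions (1)--(4) of \Cref{lem:interpolation}: properties (1) and (3) of $\Pi_V$ kill the volume and facet parts of $b_h^j$ for $j=s,d$, properties (2) and (4) of $\Pi_V^m$ kill those of $b_h^d$ applied to $(\boldsymbol{q}_h^m,e_{u^m}^I)$, and the interface contributions $b_h^{I,j}(\bar q_h^j,\bar e_u^I)$ vanish because $\bar q_h^j n^j\in[P_k(F)]^{\text{dim}}$ on each interface facet. To handle $c_h(\boldsymbol{e}_p^I,\boldsymbol{q}_h)=\sigma\kappa_m(e_{p^m}^I-e_p^I,q_h^m-q_h)_{\Omega^d}$ I would first observe that, since $p\in L^2_0(\Omega)$ and $p^m\in L^2_0(\Omega^d)$, the projections $\Pi_Q$ and $\Pi_Q^m$ onto the mean-zero spaces $Q_h$ and $Q_h^m$ coincide with the purely elementwise $L^2$-projections onto piecewise $P_{k-1}$ (constants lie in $P_{k-1}$, so the mean-zero constraint is preserved automatically by the elementwise projection). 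Consequently $e_p^I$ and $e_{p^m}^I$ are elementwise $L^2$-orthogonal to every piecewise $P_{k-1}$ function on $\mathcal{T}^d$, and in particular to $q_h^m-q_h$.

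The computations themselves are mechanical projection-orthogonality bookkeeping; the one step that requires care is the identification of $\Pi_Q$ and $\Pi_Q^m$ with elementwise $L^2$-projections onto $P_{k-1}$. Without that observation one only has orthogonality against mean-zero test functions, whereas $q_h^m-q_h$ restricted to $\Omega^d$ need not be mean-zero, so the vanishing of $c_h(\boldsymbol{e}_p^I,\boldsymbol{q}_h)$ would not be immediate. Once that point is addressed, the remainder is a clean assembly of the defining properties of $\Pi_V$, $\Pi_V^m$, $\bar\Pi_V$, $\Pi_Q$, $\Pi_Q^m$, and $\bar\Pi_Q^j$.
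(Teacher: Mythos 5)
Your proof is correct and follows essentially the same route as the paper: consistency yields Galerkin orthogonality, the error is split into interpolation and discrete parts, and the interpolation-error contributions are eliminated term by term using the orthogonality properties of $\Pi_V$, $\Pi_V^m$, $\bar{\Pi}_V$, and the $L^2$-projections onto the pressure spaces. In fact, your treatment of $c_h(\boldsymbol{e}_p^I,\boldsymbol{q}_h)$ (and of the volume terms involving $\Pi_Q$) is slightly more careful than the paper's, which asserts the vanishing directly from ``$\Pi_Q^j$, $\bar{\Pi}_Q^j$ are $L^2$-projections'' without spelling out that the projections onto the mean-zero spaces $Q_h$, $Q_h^m$ coincide with the elementwise projections onto piecewise $P_{k-1}$ precisely because $p$ and $p^m$ have zero mean and constants belong to $P_{k-1}$ --- the point you correctly flag as necessary since $q_h^m-q_h$ restricted to $\Omega^d$ need not have zero mean.
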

\begin{proof}
  By \Cref{lem:consistency} and \cref{eq:HDG-dualporosityproblem-18},
  \begin{subequations}
    \begin{align}
      \label{eq:erreq-a}
      a_h(\boldsymbol{u},\boldsymbol{v}_h)+b_h(\boldsymbol{p},\boldsymbol{v}_h)
      &=a_h(\boldsymbol{u}_h,\boldsymbol{v}_h)+b_h(\boldsymbol{p}_h,\boldsymbol{v}_h),
      \\
      \label{eq:erreq-b}
      b_h(\boldsymbol{q}_h,\boldsymbol{u})-c_h(\boldsymbol{p},\boldsymbol{q}_h)
      &=b_h(\boldsymbol{q}_h,\boldsymbol{u}_h)-c_h(\boldsymbol{p}_h,\boldsymbol{q}_h),      
    \end{align}
  \end{subequations}
  for all $\boldsymbol{v}_h\in \boldsymbol{V}_h$,
  $\boldsymbol{q}_h\in \boldsymbol{P}_h$. Subtracting
  $a_h(\boldsymbol{\Pi}u,\boldsymbol{v}_h)+b_h(\boldsymbol{\Pi}p,\boldsymbol{v}_h)$
  from both sides of \cref{eq:erreq-a} and
  $b_h(\boldsymbol{q}_h,\boldsymbol{\Pi}u)-c_h(\boldsymbol{\Pi}p,\boldsymbol{q}_h)$
  from both sides of \cref{eq:erreq-b}, we find
  \begin{align*}
    a_h(\boldsymbol{e}_u^h,\boldsymbol{v}_h)+b_h(\boldsymbol{e}_p^h,\boldsymbol{v}_h)
    &= a_h(\boldsymbol{e}_u^I,\boldsymbol{v}_h)+b_h(\boldsymbol{e}_p^I,\boldsymbol{v}_h),
    \\
    b_h(\boldsymbol{q}_h,\boldsymbol{e}_u^h)-c_h(\boldsymbol{e}_p^h,\boldsymbol{q}_h)
    &= b_h(\boldsymbol{q}_h,\boldsymbol{e}_u^I)-c_h(\boldsymbol{e}_p^I,\boldsymbol{q}_h).
  \end{align*}
  Expanding the right hand sides:
  \begin{align*}
    a_h&(\boldsymbol{e}_u^I,\boldsymbol{v}_h)+b_h(\boldsymbol{e}_p^I,\boldsymbol{v}_h)
    = a_h^s(\boldsymbol{e}_u^I,\boldsymbol{v}_h) + a_h^I(\bar{e}_u^I,\bar{v}_h)+a_h^d(e_u^I, v_h) + a_h^m(e_{u^m}^I, v_h^m) 
    \\
     & + b_h^s(\boldsymbol{e}_{p^s}^I,v_h) + b_h^d(\boldsymbol{e}_{p^d}^I,v_h) + b_h^d(\boldsymbol{e}_{p^m}^I,v_h^m)
       + b_h^{I,s}(\bar{e}_{p^s}^I, \bar{v}_h)+b_h^{I,d}(\bar{e}_{p^d}^I, \bar{v}_h),
    \\
    b_h&(\boldsymbol{q}_h,\boldsymbol{e}_u^I)-c_h(\boldsymbol{e}_p^I,\boldsymbol{q}_h)
    = b_h^s(\boldsymbol{q}_h^s,e_u^I) + b_h^d(\boldsymbol{q}_h^d,e_u^I) + b_h^d(\boldsymbol{q}_h^m,e_{u^m}^I)
       \\
       &+ b_h^{I,s}(\bar{q}_h^s, \bar{e}_{u}^I)+b_h^{I,d}(\bar{q}_h^d, \bar{e}_{u}^I)-c_h(\boldsymbol{e}_p^I,\boldsymbol{q}_h).
  \end{align*}
  Since $\Pi_Q^j$, $\bar{\Pi}_Q^j$, $j=d,m$ are $L^2$-projections, we
  note that
  \begin{align*}
    b_h^s(\boldsymbol{e}^I_{p^s},{v}_h)&=0, \quad  b_h^d(\boldsymbol{e}^I_{p^d},{v}_h)=0, \quad
    b_h^d(\boldsymbol{e}^I_{p^m},{v}_h^m)=0,
    \\
    \quad b_h^{I,d}(\bar{e}_{p^d}^I, \bar{v}_h)&=0, \quad
    b_h^{I,s}(\bar{e}_{p^s}^I, \bar{v}_h)=0, \quad c(\boldsymbol{e}^I_p,\boldsymbol{q}_h)=0.    
  \end{align*}
  Furthermore, by \Cref{lem:interpolation},
  \begin{equation*}
    b_h^s(\boldsymbol{q}^s_h,{e}^I_{u})=0,
    \quad b_h^d(\boldsymbol{q}^d_h,{e}^I_{u})=0,
    \quad b_h^d(\boldsymbol{q}^m_h,e^I_{u^m})=0,
  \end{equation*}
  and by the definition of the $L^2$-projection $\bar{\Pi}_V$, we have
  \begin{equation*}
    b_h^{I,s}(\bar{q}_h^s,\bar{e}_{u}^I)=0,
    \quad b_h^{I,d}(\bar{q}_h^d,\bar{e}_{u}^I)=0,
    \quad a_h^I(\bar{e}_u^I, \bar{v}_h)=0.
  \end{equation*}
  The conclusion follows from the above identities.
\end{proof}
\begin{theorem}
 \label{thm:error}
 Let
 $(\boldsymbol{u},\boldsymbol{p})\in \boldsymbol{Z}\times
 \boldsymbol{P}$ be the solution of the dual-porosity-Stokes problem
 \cref{eq:Stokes,eq:dualporosity,eq:IC} such that
 $u^s\in [H^{k+1}(\Omega^s)]^{\rm dim}$,
 $u^d \in [H^{k}(\Omega^d)]^{\rm dim}$,
 $u^m \in [H^{k}(\Omega^d)]^{\rm dim}$, $p^j\in H^k(\Omega^j)$,
 $j=s,d$, $p^m\in H^k(\Omega^d)$, $k\geq 1$. Let
 $(\boldsymbol{u}_h, \boldsymbol{p}_h)\in \boldsymbol{Z}_h\times
 \boldsymbol{P}_h$ be the solution to
 \cref{eq:HDG-dualporosityproblem-18}. Then,
 \begin{subequations}
   \begin{align}
     \label{eq:e_u^hbnd-new-ac}
     \|\boldsymbol{e}^h_u\|_{Z_h}
     &\leq  CC_e^{-1} C_m h^k(\|u\|_{k+1,\Omega^s}+\|u\|_{k,\Omega^d}+\|u^m\|_{k,\Omega^d}),
     \\
     \label{eq:e_p^hbnd-new-ac}
     \|\boldsymbol{e}_p^h\|_{P_h}
     &\leq C \beta_p^{-1}(C_cC_e^{-1}+1)C_mh^k(\|u\|_{k+1,\Omega^s}+\|u\|_{k,\Omega^d}+\|u^m\|_{k,\Omega^d}),
   \end{align}   
 \end{subequations}
 where $C_e$ is the ellipticity constant in \Cref{lem:Coercivity a}, $\beta_p$ is the inf-sup constant in \Cref{thm:infsup condition b}, $C_c$ is the boundedness constant in \cref{eq:ahboundedness},
 $C_m = \max(C_c^{s,*},\kappa_f^{-1},\kappa_m^{-1})$ in which $C_c^{s,*}$ is the boundedness constant in \Cref{rem:extendedboundedness}, and $C$ is a
 generic constant independent of $h, \sigma, \mu, \kappa_f$, and
 $\kappa_m$.
\end{theorem}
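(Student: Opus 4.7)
The plan is a standard saddle-point energy argument applied to the error equations of Lemma \ref{lem:reduced-error-equations}, followed by an inf-sup step for the pressure. First I would test \eqref{eq:reduced-Error-eq-1} with $\boldsymbol{v}_h=\boldsymbol{e}^h_u$ and \eqref{eq:reduced-Error-eq-2} with $\boldsymbol{q}_h=-\boldsymbol{e}^h_p$, then add the two equations to cancel the mixed $b_h$ terms and obtain the energy identity
\begin{equation*}
a_h(\boldsymbol{e}^h_u,\boldsymbol{e}^h_u)+c_h(\boldsymbol{e}^h_p,\boldsymbol{e}^h_p)
= a_h^s(\boldsymbol{e}^I_u,\boldsymbol{e}^h_u)+a_h^d(e^I_u,e^h_u)+a_h^m(e^I_{u^m},e^h_{u^m}).
\end{equation*}
The first (general) lower bound of Lemma \ref{lem:Coercivity a} bounds $a_h(\boldsymbol{e}^h_u,\boldsymbol{e}^h_u)$ below by $C_e$ times the $V_h^s$-, tangential $\Gamma^I$-, and $L^2(\Omega^d)$-parts of $\|\boldsymbol{e}^h_u\|_{Z_h}^2$, while the non-negative term $c_h(\boldsymbol{e}^h_p,\boldsymbol{e}^h_p)$ is kept on the left for use below. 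On the right I would use the extended boundedness of Remark \ref{rem:extendedboundedness} for $a_h^s$ and Cauchy--Schwarz for $a_h^d$, $a_h^m$ (whose constants $\kappa_f^{-1},\kappa_m^{-1}$ are dominated by $C_m$), then absorb the $\boldsymbol{e}^h_u$-factors into the coercive left-hand side via Young's inequality. Together with the interpolation estimates \eqref{lem:estimates on xi u} and parts (5)--(6) of Lemma \ref{lem:interpolation}, this produces the $h^k$ rate on every contribution to $\|\boldsymbol{e}^h_u\|_{Z_h}$ except the divergence pieces in $\Omega^d$.

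The missing $\|\nabla\cdot e^h_u\|_{\Omega^d}$ and $\|\nabla\cdot e^h_{u^m}\|_{\Omega^d}$ terms are recovered from the element-wise mass-conservation identities of Lemma \ref{lem:Mass conservation} combined with the commuting-divergence property of the BDM interpolants (Lemma \ref{lem:interpolation}(1) gives $\nabla\cdot\Pi_V u=\Pi_Q^d(\nabla\cdot u)$ on each $K\in\mathcal{T}^d$, and analogously for $\Pi_V^m$). Subtracting these two identities from \eqref{eq:Numerical properties d} and \eqref{eq:Numerical properties e} rewrites $\nabla\cdot e^h_u$ and $\nabla\cdot e^h_{u^m}$ pointwise as $\sigma\kappa_m$-multiples of the pressure discrepancy $e^h_p-e^h_{p^m}$, so that the sum of their squared $L^2(\Omega^d)$-norms is controlled by $c_h(\boldsymbol{e}^h_p,\boldsymbol{e}^h_p)$ already standing on the left of the identity above. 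This closes \eqref{eq:e_u^hbnd-new-ac} with constant $CC_e^{-1}C_m$.

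For the pressure, I would apply Theorem \ref{thm:infsup condition b} to $\boldsymbol{e}^h_p$ and use \eqref{eq:reduced-Error-eq-1} to express the supremum's numerator as
\begin{equation*}
b_h(\boldsymbol{e}^h_p,\boldsymbol{v}_h) = a_h^s(\boldsymbol{e}^I_u,\boldsymbol{v}_h)+a_h^d(e^I_u,v_h)+a_h^m(e^I_{u^m},v_h^m)-a_h(\boldsymbol{e}^h_u,\boldsymbol{v}_h).
\end{equation*}
The extended boundedness of Remark \ref{rem:extendedboundedness} handles the three interpolation-error contributions in $C_m h^k$, while Lemma \ref{lem:Continuity of a} together with the velocity bound just proved handles the remaining $a_h$ term in $C_c\cdot C_e^{-1}C_m h^k$. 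Dividing by $\|\boldsymbol{v}_h\|_{Z_h}$ and taking the supremum yields \eqref{eq:e_p^hbnd-new-ac}, the prescribed constant structure $\beta_p^{-1}(C_cC_e^{-1}+1)C_m$ arising naturally from these two additive sources.

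The main obstacle is the $H(\text{div},\Omega^d)$ part of $\|\boldsymbol{e}^h_u\|_{Z_h}$: the general coercivity bound of Lemma \ref{lem:Coercivity a} controls only the $L^2(\Omega^d)$-pieces, and the second (divergence-free) form of the lemma is not directly applicable since $e^h_u$ and $e^h_{u^m}$ are not divergence-free in $\Omega^d$. The divergence pieces must therefore be channeled through the mass-conservation structure of Lemma \ref{lem:Mass conservation}, and the constants must be tracked carefully so that $\sigma\leq\sigma^*$ and $\kappa_m$ do not enter the final estimate beyond the $\kappa_m^{-1}$ already inside $C_m$.
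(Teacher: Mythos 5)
Your proposal is correct and follows the same overall route as the paper: test the error equations of \Cref{lem:reduced-error-equations} with $(\boldsymbol{e}^h_u,-\boldsymbol{e}^h_p)$, use coercivity of $a_h$ together with non-negativity of $c_h$, bound the right-hand side via \Cref{rem:extendedboundedness} and the Cauchy--Schwarz inequality, and then obtain the pressure bound from the inf-sup condition of \Cref{thm:infsup condition b} combined with \Cref{lem:Continuity of a} and the velocity bound; the constant structure $CC_e^{-1}C_m$ and $\beta_p^{-1}(C_cC_e^{-1}+1)C_m$ comes out identically. Where you genuinely add something is the treatment of the divergence contributions to $\|\boldsymbol{e}^h_u\|_{Z_h}$: the paper's proof simply cites \Cref{lem:Coercivity a}, even though neither statement of that lemma applies verbatim (the first form controls no divergence terms, and the second requires $\nabla\cdot e^h_u=\nabla\cdot e^h_{u^m}=0$ in $\Omega^d$, which fails here, as you correctly observe). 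Your detour through \Cref{lem:Mass conservation} and the commuting property of the BDM interpolant, yielding $\nabla\cdot e^h_u=-\sigma\kappa_m(e^h_p-e^h_{p^m})$ and $\nabla\cdot e^h_{u^m}=\sigma\kappa_m(e^h_p-e^h_{p^m})$ in $\Omega^d$, is exactly what is needed to make this step rigorous; equivalently, one can read these identities off \cref{eq:reduced-Error-eq-2} with suitable test functions. The only point to make explicit is that these identities give $\|\nabla\cdot e^h_u\|^2_{\Omega^d}+\|\nabla\cdot e^h_{u^m}\|^2_{\Omega^d}=2\sigma\kappa_m\,c_h(\boldsymbol{e}^h_p,\boldsymbol{e}^h_p)$, i.e., control by $c_h$ only up to the factor $\sigma\kappa_m$; since $C_e\le\kappa_m^{-1}$ and $\sigma\le\sigma^*$, this factor is at most $\sigma^*C_e^{-1}$, so the final bound $CC_e^{-1}C_m h^k(\cdots)$ survives with $C$ depending on $\sigma^*$ but not on $\sigma$, $\mu$, $\kappa_f$, or $\kappa_m$ --- precisely the bookkeeping you flag in your closing paragraph.
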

\begin{proof}
  Choose
  $(\boldsymbol{v}_h,\boldsymbol{q}_h)=(\boldsymbol{e}_u^h,-\boldsymbol{e}_p^h)$
  in \cref{eq:reduced-Error-eqs}. Then, by \Cref{lem:Coercivity a} and
  positive semi-definiteness of $c_h$,
  \begin{equation}
    \label{eq:estimate-of-u}
    C_e
    \|\boldsymbol{e}^h_{{u}}\|_{Z_h}^2
    \leq a_h(\boldsymbol{e}^h_{{u}},\boldsymbol{e}^h_{{u}})
    + c_h(\boldsymbol{e}^h_{{p}},\boldsymbol{e}^h_{{p}})
    = a_h^s(\boldsymbol{e}_{u}^I, \boldsymbol{e}_{u}^h)
    + a_h^d(e^I_{u},e^h_{u}) + a_h^m(e^I_{u^m},e^h_{u^m}).
  \end{equation}
  We bound the right hand side of \cref{eq:estimate-of-u} by
  \Cref{rem:extendedboundedness}, the Cauchy--Schwarz inequality, and
  the definition of $\|\cdot\|_{V_h^{s,*}}$ as follows:
  \begin{equation}
    \label{eq:as}
    \begin{split}
      &a_h^s(\boldsymbol{e}_{u}^I, \boldsymbol{e}_{u}^h)+ a_h^d(e^I_{u},e^h_{u})+a_h^m(e^I_{u^m},e^h_{u^m})
      \\
      &\quad \leq C_c^{s,*}  \|\boldsymbol{e}_{u}^I\|_{V_h^{s,*}}\|\boldsymbol{e}_{u}^h\|_{V_h^s}
      + \kappa_f^{-1} \| e^I_{u}\|_{\Omega^d}\|e^h_{u}\|_{\Omega^d}+\kappa_m^{-1} \| e^I_{u^m}\|_{\Omega^d}\|e^h_{u^m}\|_{\Omega^d}
      \\
      &\quad \leq \max(C_c^{s,*},\kappa_f^{-1},\kappa_m^{-1}) (\|\boldsymbol{e}_{u}^I\|^2_{V_h^{s,*}}
      +\| e^I_{u}\|^2_{\Omega^d}+ \| e^I_{u^m}\|^2_{\Omega^d})^{1/2}
      \\
      &\quad\quad\times(\|\boldsymbol{e}_{u}^h\|^2_{V_h^{s,*}}+\|e^h_{u}\|^2_{\Omega^d}+\|e^h_{u^m}\|^2_{\Omega^d})^{1/2}
      \\
      &\quad \leq \max(C_c^{s,*},\kappa_f^{-1},\kappa_m^{-1}) (\|\boldsymbol{e}^I_{u}\|_{V_h^{s,*}}^2
      +\| e^I_{u}\|^2_{\Omega^d}+ \| e^I_{u^m}\|^2_{\Omega^d})^{1/2}
      \|\boldsymbol{e}^h_{{u}}\|_{Z_h}.      
    \end{split}
  \end{equation}
  \Cref{eq:e_u^hbnd-new-ac} now follows from
  \cref{eq:estimate-of-u,eq:as}, \Cref{lem:interpolation}, and
  \cref{eq:barPiVestimate}.

  We next prove \cref{eq:e_p^hbnd-new-ac}. By
  \cref{eq:reduced-Error-eq-1}, \Cref{lem:Continuity of a}, and
  following the same steps as in \cref{eq:as}, we have :
  \begin{multline*}
    |b_h(\boldsymbol{e}^h_p,\boldsymbol{v}_h)|
    \le
    |a_h(\boldsymbol{e}^h_u,\boldsymbol{v}_h)|
    +|a_h^s(\boldsymbol{e}_{u}^I, \boldsymbol{v}_h^s)|
    + |a_h^d(e^I_{u},v_h^d)| + |a_h^m(e^I_{u^m},v_h^m)|,
    \\
    \le (C_c\|\boldsymbol{e}_u^h\|_{Z_h}
    + \max(C_c^{s,*},\kappa_f^{-1},\kappa_m^{-1}) (\|\boldsymbol{e}^I_{u}\|_{V_h^{s,*}}^2
    +\| e^I_{u}\|^2_{\Omega^d}+ \| e^I_{u^m}\|^2_{\Omega^d})^{1/2})
    \|\boldsymbol{v}_h\|_{Z_h}.
  \end{multline*}
  Combining this with the inf-sup condition in \Cref{thm:infsup
    condition b}, \Cref{lem:interpolation}, \cref{eq:barPiVestimate},
  and \cref{eq:e_u^hbnd-new-ac},
  \begin{equation*}
    \begin{split}
      &\|\boldsymbol{e}^h_p\|_{P_h}
      \leq \beta_p^{-1}\sup_{\boldsymbol{v}_h\in \boldsymbol{Z}_h,\boldsymbol{v}_h\neq \boldsymbol{0}}
      \frac{\boldsymbol{b}_h(\boldsymbol{e}^h_p,\boldsymbol{v}_h)}{\|\boldsymbol{v}_h\|_{Z_h}}
      \\
      &\leq \beta_p^{-1}(C_c\|\boldsymbol{e}_u^h\|_{Z_h}
      + \max(C_c^{s,*},\kappa_f^{-1},\kappa_m^{-1}) (\|\boldsymbol{e}^I_{u}\|_{V_h^{s,*}}^2
      +\| e^I_{u}\|^2_{\Omega^d}+ \| e^I_{u^m}\|^2_{\Omega^d})^{1/2})
      \\
      & \le
      C \beta_p^{-1}(C_cC_e^{-1}+1)\max(C_c^{s,*},\kappa_f^{-1},\kappa_m^{-1})h^k(\|u\|_{k+1,\Omega^s}+\|u\|_{k,\Omega^d}+\|u^m\|_{k,\Omega^d}),
    \end{split}
  \end{equation*}
  which is the desired result.
\end{proof}

\begin{corollary}
  \label{cor:apriori} 
  Under the assumptions of \Cref{thm:error},
  {\fontsize{11.0}{11.0}\selectfont
  \begin{subequations}
    \begin{align}
      \label{eq:error-for-u}
     \|\boldsymbol{u}-\boldsymbol{u}_h\|_{Z_h}
        \leq &  C(C_e^{-1}C_m+1)h^{k} \del[0]{\|u\|_{k+1,\Omega^s}+\|u\|_{k+1,\Omega^d}+\|u^m\|_{k+1,\Omega^d}},
      \\
      \label{eq:error-for-p}
     \|\boldsymbol{p}-\boldsymbol{p}_h\|_{P_h}
        \leq  & Ch^k (\|p\|_{k,\Omega^s}+\|p\|_{k,\Omega^d}+\|p^m\|_{k,\Omega^d})
      \\ \nonumber
      &+ C\beta_p^{-1} (C_cC_e^{-1}+1)C_mh^k(\|u\|_{k+1,\Omega^s}+\|u\|_{k,\Omega^d}+\|u^m\|_{k,\Omega^d}),
    \end{align}    
  \end{subequations}
  }
  where $\boldsymbol{u}:=(u, u^s|_{\Gamma_0^s}, u^m)$,
  $\boldsymbol{p}:=(p, p^s|_{\Gamma_0^s}, p^d|_{\Gamma_0^d}, p^m,
  p^m|_{\Gamma_0^m})$, and $C$ is a constant a constant which depends on the shape regularity of meshes and the polynomial degree $k$ but is independent of
  $h, \sigma, \mu, \kappa_f$, and $\kappa_m$, and where
  \begin{equation*}
      \begin{split}
          C_m 
          &=
          \max(2\mu\max(1+C, \beta+C),\kappa_f^{-1},\kappa_m^{-1}),
          \\
          C_e 
          &=
          \min\{\mu(1-C^2/\beta)\min(1,\beta),\alpha
          \kappa_f^{-1/2},\kappa_f^{-1},\kappa_m^{-1}\},
          \\
          C_c 
          &=
          \max(2\mu\max(1+C_{\rm tr}, \beta+C_{\rm tr}),\kappa_f^{-1},\kappa_m^{-1},\alpha \mu
          \kappa_f^{-1/2}).
      \end{split}
  \end{equation*}
\end{corollary}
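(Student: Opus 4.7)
The proof is a straightforward triangle-inequality splitting that combines the discrete error estimates of \Cref{thm:error} with the interpolation error estimates already collected in \cref{lem:estimates on xi u,lem:estimates on xi p,eq:barPiVestimate,eq:interpolation-est}. The plan is to write
\begin{equation*}
  \boldsymbol{u}-\boldsymbol{u}_h = (\boldsymbol{u}-\boldsymbol{\Pi}u) + (\boldsymbol{\Pi}u - \boldsymbol{u}_h) = \boldsymbol{e}^I_u - \boldsymbol{e}^h_u,
\end{equation*}
and analogously $\boldsymbol{p}-\boldsymbol{p}_h = \boldsymbol{e}^I_p - \boldsymbol{e}^h_p$, then apply the triangle inequality in the norms $\|\cdot\|_{Z_h}$ and $\|\cdot\|_{P_h}$ respectively.

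For the velocity estimate, I would first bound $\|\boldsymbol{e}^h_u\|_{Z_h}$ by \cref{eq:e_u^hbnd-new-ac} in \Cref{thm:error}. For the interpolation part $\|\boldsymbol{e}^I_u\|_{Z_h}$, I need to control each summand of the $Z_h$-norm: $\|\epsilon(e^I_{u^s})\|_{\Omega^s}$, the jump term $\sum_K h_K^{-1}\|e^I_u - \bar e^I_u\|_{\partial K}^2$, $\|e^I_u\|_{H(\mathrm{div},\Omega^d)}$, $\|e^I_{u^m}\|_{H(\mathrm{div},\Omega^d)}$, and $\|(\bar e^I_{u^s})^t\|_{\Gamma^I}$. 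Parts (5)--(8) of \Cref{lem:interpolation}, together with \cref{eq:barPiVestimate} and the standard trace inequality \cite[(1.24)]{Girault:2009}, yield each of these of order $h^k$ times the appropriate $H^{k+1}$-seminorm of $u$ or $u^m$. Combining gives \cref{eq:error-for-u}, with the factor $(C_e^{-1}C_m+1)$ arising from adding the discrete-error coefficient $C_e^{-1}C_m$ from \cref{eq:e_u^hbnd-new-ac} to the $O(1)$ constants that multiply the interpolation error.

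For the pressure estimate, the discrete part $\|\boldsymbol{e}^h_p\|_{P_h}$ is controlled by \cref{eq:e_p^hbnd-new-ac}, which contributes the second line of \cref{eq:error-for-p}. For the interpolation part $\|\boldsymbol{e}^I_p\|_{P_h}$, I unpack the norm according to the definition of $\|\cdot\|_{P_h}$ at the end of \Cref{ss:not-hdg}: this gives volume contributions $\|e^I_{p^s}\|_{\Omega^s}$, $\|e^I_{p^d}\|_{\Omega^d}$, $\|e^I_{p^m}\|_{\Omega^d}$ controlled by the $L^2$-projection estimate \cref{eq:interpolation-est}, and facet contributions $\sum_K h_K\|\bar e^I_{p^j}\|_K^2$ controlled by the boundary estimate in \cref{eq:interpolation-est}. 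Applying \cref{lem:estimates on xi p} and summing gives the first line of \cref{eq:error-for-p}.

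I expect no real obstacle here, since the result is obtained by combining already-proven estimates; the only bookkeeping point is making sure to keep the explicit dependence on the problem parameters through $C_e, C_c, C_m, \beta_p$ as stated, and that the constant $C$ in the final estimate collects only mesh-shape-regularity and polynomial-degree dependencies. The form of the constants $C_m, C_e, C_c$ listed at the end of the statement is just a restatement of the constants defined in \Cref{lem:Coercivity a}, \Cref{lem:Continuity of a}, and \Cref{rem:extendedboundedness} (with the discrete trace constant $C_{\text{tr}}$ relabeled $C$ inside $C_m$), so no separate derivation is needed.
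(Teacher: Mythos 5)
Your proposal is correct and follows essentially the same route as the paper: a triangle-inequality splitting $\boldsymbol{u}-\boldsymbol{u}_h = \boldsymbol{e}^I_u - \boldsymbol{e}^h_u$ (and likewise for the pressure), with the discrete parts bounded by \Cref{thm:error} and the interpolation parts bounded via \Cref{lem:interpolation}, \cref{lem:estimates on xi u,lem:estimates on xi p}. Your closing observation that the displayed constants $C_m$, $C_e$, $C_c$ are merely restatements of those in \Cref{lem:Coercivity a}, \Cref{lem:Continuity of a}, and \Cref{rem:extendedboundedness} is also exactly how the paper treats them.
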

\begin{proof}
  We start by proving \cref{eq:error-for-u}. By a triangle inequality,
  \cref{eq:e_u^hbnd-new-ac}, \cref{lem:estimates on xi u}, and
  \Cref{lem:interpolation}, we obtain: 
  \begin{align*}
    \|\boldsymbol{u}&-\boldsymbol{u}_h\|_{Z_h}
    \le \|\boldsymbol{e}^h_u\|_{Z_h}+\|\boldsymbol{e}_u^I\|_{Z_h} 
    \\
    \leq &CC_e^{-1}C_mh^k(\|u\|_{k+1,\Omega^s}+\|u\|_{k,\Omega^d}+\|u^m\|_{k,\Omega^d}) 
    \\
       &\qquad+  Ch^k(\|u\|_{k+1,\Omega^s}+\|u\|_{k+1,\Omega^d}+\|u^m\|_{k+1,\Omega^d})
    \\
    \leq &C(C_e^{-1}C_m+1)h^{k}\big(\|u\|_{k+1,\Omega^s}+\|u\|_{k+1,\Omega^d}+\|u^m\|_{k+1,\Omega^d}\big).
  \end{align*}
  We next prove \cref{eq:error-for-p}. By the triangle inequality,
  \cref{lem:estimates on xi p}, and \cref{eq:e_p^hbnd-new-ac},
    \begin{align*}
    \|\boldsymbol{p}-\boldsymbol{p}_h\|_{P_h}  \leq& \|\boldsymbol{e}^I_p\|_{P_h}+\|\boldsymbol{e}^h_p\|_{P_h}
    \\
    \leq& Ch^k (\|p\|_{k,\Omega^s}+\|p\|_{k,\Omega^d}+\|p^m\|_{k,\Omega^d})
    \\
    &+ C\beta_p^{-1} (C_cC_e^{-1}+1)C_mh^k(\|u\|_{k+1,\Omega^s}+\|u\|_{k,\Omega^d}+\|u^m\|_{k,\Omega^d}).
  \end{align*}
\end{proof}

%---------------------------------------------------------------------
\section{Numerical examples}
\label{sec:Numerical ex}

The numerical examples in this section have been implemented using the
NGSolve library \cite{Schoberl:1997,Schoberl:2014}. The interior penalty parameter in \cref{eq:HDG-dualporosityproblem-18} is set to $\beta=10k^2$ for all examples which, by the explicit upper bound of $C_{\text{tr}}$ in \cite{Hestheven:2003}, is sufficient for stability if $k \le 5$. 

%---------------------------------------------------------------------
\subsection{Example 1}
\label{ex:1}

In our first numerical example, we verify our theoretical results
using a manufactured solution adapted from \cite[Section
6.1]{Cesmelioglu:2020}. Let $\Omega =(0,1)^2$,
$\Omega^s=(0,1)\times(0.5,1)$, and $\Omega^d=(0,1)\times(0,0.5)$. We
choose the following parameters: $\kappa_m=1$, $\kappa_f=1$, $\mu=1$,
$\sigma=1/2$, and
$\alpha=\mu\sqrt{\kappa_f}(1+4\pi^2)/2$. Boundary conditions and
source terms are set such that the exact solution to our problem is
given by:
\begin{align*}
  u^s
  &=
    \begin{bmatrix}
      -2\pi^{-2}\sin(\pi x) \exp(y/2)
      \\
      \pi^{-1}\cos(\pi x) \exp(y/2),
    \end{bmatrix},
  &
    p^s &= \frac{\kappa_f\mu -2}{\kappa_f\pi}\cos(\pi x) \exp(y/2),
  \\
  u^d
  &=
    \begin{bmatrix}
      -2\sin(\pi x) \exp(y/2),
      \\
      \pi^{-1}\cos(\pi x) \exp(y/2)
    \end{bmatrix},
  &
    p^d &= -\frac{2}{\kappa_f\pi}\cos(\pi x) \exp(y/2),
  \\
  u^m
  &=\begin{bmatrix}
    -\sin(\pi x) \cos(2 \pi y)
    \\
    -2\cos(\pi x)\sin(2 \pi y)
  \end{bmatrix},
  &
    p^m &= \frac{1}{\kappa_m\pi} \cos(\pi x) \cos(2\pi y).
\end{align*}
\Cref{tab:ratesconv-s} presents the $L^2$-errors and convergence rates
for $u$ and $p$ in $\Omega^s$ and \Cref{tab:ratesconv-d} presents the
$L^2$-errors and convergence rates for $u$ and $p$ in $\Omega^d$, and
$u^m$ and $p^m$ in $\Omega^d$ together with a column demonstrating
mass conservation.  These tables show that the rates of convergence of
$\norm[0]{u_h-u}_{\Omega^s}$, $\norm[0]{u_h-u}_{\Omega^d}$,
$\norm[0]{u_h^m-u^m}_{\Omega^d}$, $\norm[0]{p_h-p}_{\Omega^s}$,
$\norm[0]{p_h-p}_{\Omega^d}$, $\norm[0]{p_h^m-p^m}_{\Omega^d}$,
$\norm[0]{\nabla (u_h-u)}_{\Omega^s}$,
$\norm[0]{\nabla \cdot (u_h-u)}_{\Omega^d}$, and
$\norm[0]{\nabla \cdot (u_h^m - u^m)}_{\Omega^d}$ are at least $k$ or
higher, thereby corroborating \Cref{cor:apriori} and \cref{eq:Numerical properties f,eq:Numerical properties d,eq:Numerical properties e}. Note that the tables actually show that
$\norm[0]{u_h-u}_{\Omega^s}$, $\norm[0]{u_h-u}_{\Omega^d}$, and
$\norm[0]{u_h^m-u^m}_{\Omega^d}$ converge with an asymptotic rate of
convergence of $k+1$, even though this is not shown by our error
analysis.

\begin{table}[!ht]
  \centering{
    \begin{tabular}{cccccccc}
    \multicolumn{8}{l}{\textbf{In the Stokes region in $\Omega^s$}}\\
      \hline
      Cells
      & $\norm{e_u}_{\Omega^s}$ & Rate & $\norm{e_p}_{\Omega^s}$ 
      & Rate &$\norm{\nabla e_u}_{\Omega^s}$& Rate & $\norm{\nabla \cdot u_h}_{\Omega^s}$  \\
      \hline
      \multicolumn{8}{l}{$k=2$} \\
      32 & 5.4e-04 &- &3.4e-02&-& 1.8e-02&& 6.7e-17\\
    128 & 6.8e-05 & 3.0 &  6.4e-03 & 2.4 & 4.6e-03&2.0& 3.8e-17  \\
    512 & 8.6e-06 & 3.0&  1.3e-03 & 2.3  &1.1e-03&2.0& 4.6e-17 \\
    2048 & 1.1e-06 & 3.0 &  2.6e-04 & 2.3 & 2.8e-04 &2.0& 4.9e-17 \\
    8192 & 1.4e-07 & 3.0 & 5.9e-05 & 2.2 &7.1e-05&2.0& 4.8e-17 \\
      \multicolumn{8}{l}{$k=3$} \\
     32& 2.5e-05 &-& 1.7e-03&-&1.3e-03&-&5.8e-17\\
    128 & 1.6e-06 & 4.0 &  1.7e-04 & 3.3 & 1.5e-04& 3.0 & 5.7e-17 \\
    512 & 1.0e-07 & 4.0&  1.9e-05 & 3.2 &1.9e-05 &3.0& 5.2e-17 \\
    2048 & 6.4e-09 & 4.0 & 2.2e-06 & 3.1 &2.3e-06 & 3.0 & 5.1e-17 \\
    8192 & 4.0e-10 & 4.0 & 2.7e-07 & 3.0 &2.9e-07 & 3.0 & 5.2e-17\\
      \hline
    \end{tabular}
    \caption{Errors and rates of convergence, for the problem as set up
    in \Cref{ex:1}, in $\Omega^s$  for
    the velocity and pressure fields using polynomial degrees $k=2$ and
    $k=3$. Here $e_u:=u_h - u$, $e_p=p_h-p$, $e_u^m:=u_h^m-u^m$, $e_p^m = p_h^m-p^m$.}
\label{tab:ratesconv-s}
}
\end{table}
\begin{table}[!ht]
  \centering{
    \begin{tabular}{cccccccc}
       \multicolumn{8}{l}{\textbf{In microfractures in }$\Omega^d$}\\
      \hline
Cells & $\|e_u\|_{\Omega^d}$  & Rate &    $ \|e_p\|_{\Omega^d}$ & Rate & $ \|\nabla \cdot e_u\|_{\Omega^d}$ & Rate & $\|\Phi\|_{\Omega^d}$ \\
        \hline
     \multicolumn{8}{l}{$k=2$} \\
     32& 2.1e-03 &-&6.6e-03&-&6.3e-02&-&9.7e-08\\
   128 & 2.7e-04 & 3.0 & 1.6e-03 & 2.0 &1.6e-02 & 2.0 & 1.3e-09 \\
    512 & 3.4e-05 & 3.0 & 4.1e-04 & 2.0 & 4.0e-03 & 2.0 & 1.9e-11\\
    2048 & 4.2e-06 & 3.0  &  1.0e-04 & 2.0 & 9.9e-04 & 2.0& 2.9e-13\\
    8192 &  5.3e-07 & 3.0 & 2.6e-05 & 2.0 & 2.5e-04 & 2.0 & 4.6e-15 \\
      \multicolumn{8}{l}{$k=3$} \\
      32& 9.4e-05&-&4.2e-04&-&4.1e-03&-&1.3e-11\\
    128 & 5.8e-06 & 4.0 &  5.4e-05 & 3.0& 5.2e-04 & 3.0 & 1.3e-14 \\
    512 & 3.6e-07 & 4.0 & 6.7e-06 & 3.0& 6.5e-05 & 3.0  & 1.0e-15\\
    2048 & 2.3e-08 & 4.0& 8.4e-07 & 3.0 &  8.1e-06 & 3.0 & 9.7e-16\\
   8192 &1.4e-09 & 4.0 & 1.1e-07 & 3.0&  1.0e-06 & 3.0  & 9.6e-16\\
      \hline
       \multicolumn{8}{l}{\textbf{In the matrix in }$\Omega^d$}\\
           \hline
      Cells & $\|e_u^m\|_{\Omega^d}$  & Rate & $\|e_p^m\|_{\Omega^d}$  & Rate & $ \|\nabla \cdot e_u^m\|_{\Omega^d}$ & Rate & $\|\Phi^m\|_{\Omega^d}$ \\
        \hline
    \multicolumn{8}{l}{$k=2$} \\
    32& 1.6e-02 &-&1.0e-02&-& 6.3e-02&-&9.2e-06\\
    128  & 2.1e-03 & 2.9 & 2.7e-03 & 2.0& 1.6e-02 & 2.0  & 1.2e-07\\
    512 &  2.7e-04 & 3.0  & 2.7e-03 & 2.0 & 4.0e-03 & 2.0 & 1.7e-09\\
    2048 & 3.4e-05 & 3.0 & 1.7e-04 & 2.0 & 9.9e-04 & 2.0 & 2.6e-11\\
    8192 & 4.3e-06 & 3.0 & 4.2e-05 & 2.0 & 2.5e-04 & 2.0 & 4.0e-13\\
    \multicolumn{8}{l}{$k=3$} \\
    32& 1.8e-03 &-& 1.7e-03 &-&4.1e-03&-&1.2e-09\\
    128 & 1.2e-04 & 4.0 & 2.2e-04 & 3.0 & 5.2e-04 & 3.0 & 1.3e-12\\
    512 & 7.5e-06 & 4.0 & 2.8e-05 & 3.0 & 6.5e-05 & 3.0 & 2.2e-15\\
    2048 & 4.7e-07 & 4.0 & 3.5e-06 & 3.0 &  8.1e-06 & 3.0 & 1.7e-15\\
    8192 &2.9e-08 & 4.0 & 4.4e-07 & 3.0 & 1.0e-06 & 3.0 & 1.8e-15\\
    \end{tabular}
    \caption{Errors and rates of convergence, for the problem as set up
    in \Cref{ex:1}, in microfractures in
    $\Omega^d$ (top) and in the matrix in $\Omega^d$ (bottom) for
    the velocity and pressure fields using polynomial degrees $k=2$ and
    $k=3$. Here $e_u:=u_h - u$, $e_p=p_h-p$, $e_u^m:=u_h^m-u^m$, $e_p^m = p_h^m-p^m$,
    $\Phi:=\sigma \kappa_m(p_h^d-p_h^m)+\nabla \cdot u_h^d -\Pi_Q^d g$
    and $\Phi^m:=\sigma \kappa_m(p_h^m-p_h^d)+\nabla \cdot u_h^m$.}
\label{tab:ratesconv-d}
}
\end{table}

%---------------------------------------------------------------------
\subsection{Example 2}
\label{ss:ex2}

We next simulate fluid flow around wellbores with open-hole completion
in a naturally fractured reservoir. We present two cases: (i) a
vertical production wellbore; and (ii) a horizontal production
wellbore, both with open-hole completion \cite{Seale:2008}. The
examples presented below are inspired by \cite[Section
6.4]{Mahbub:2019b}, \cite[Section 6.2]{Mahbub:2019}, and
\cite[Examples 5.3, 5.4]{WEN:2022}.

%---------------------------------------------------------------------
\subsubsection{A vertical production wellbore}
\label{ex2-vertical}

For this example, we set $\Omega^s=(1/2,1)\times(1/2,3/2)$ and
$\Omega^d=(0,3/2)^2\setminus \overline{\Omega}^s$ and define the
boundaries and the interface as
\begin{align*}
    \Gamma^s &:= \cbr[0]{(x,y) :\, y=3/2\text{ and } x\in (1/2,1)},
    \\
    \Gamma^d &:= \cbr[0]{(x,y):\, x=0 \text{ or } y=0 \text{ or } x=3/2 \text{ or } y=3/2}\backslash \Gamma^s ,
    \\
    \end{align*}
    and
  \begin{align*}  
  	\Gamma^I &:= \cbr[0]{(x,y):\, x=1/2 \text{ or } y=1/2 \text{ or } x=1 },
  \end{align*}
on which we impose the following boundary conditions:
\begin{subequations}
  \label{eq:wellbore-bc}
  \begin{align}
    \label{eq:Ex2bc3}
    (-p^s I+\mu \epsilon( u^s))n&=0 && \text{ on } \Gamma^s,
    \\
    \label{eq:Ex2bc4}
    p^m&=5\times 10^4 &&\text{ on } \Gamma^d,
    \\
    p^d&=10^4 &&\text{ on } \Gamma^d.
  \end{align}  
\end{subequations}
See \Cref{fig:Domain Example2} for a depiction of the domain and its
boundaries. Note that \cref{eq:Ex2bc3} imposes an outflow boundary
condition for the free flow in $\Omega^s$. The parameters for this
problem are chosen as
\begin{equation}
  \label{eq:wellbore-parameters}
  h=1/64, \,
  \kappa_m=10^{-5}, \, 
  \kappa_f=10^{-1}, \,
  \mu=10^{-3}, \,
  \sigma=0.9, \,
  f=0, \,
  g=0.
\end{equation}
  
\begin{figure}[tbp]
  \centering
  \caption{A plot of the domain used in 
    \Cref{ex2-vertical}. }
  \begin{tikzpicture}[scale=1.3]
    \draw [color=black,fill = purple!10!pink](1,1) rectangle (2,3);
    \draw [ultra thin,color=black] (0,0)--(0,3) node[midway,left]{$\Gamma^d$};
    \draw [ultra thin,color=black] (0,3)--(1,3) node[midway,above]{$\Gamma^d$};
    \draw [ultra thin,color=black] (2,3)--(3,3) node[midway,above]{$\Gamma^d$};
    \draw [ultra thin,color=black] (3,3)--(3,0) node[midway,right]{$\Gamma^d$};
    \draw [ultra thin,color=black] (3,0)--(0,0) node[midway,below]{$\Gamma^d$};
    \draw [thin,color=blue] (1,3)--(2,3)
    node[ midway,above]{$\Gamma^s$};
    \draw [ultra thin,color=black] (2,1)--(1,1)
    node[midway,above]{$\Gamma^I$};
    \draw [ultra thin,color=black] (1,1)--(1,3)
    node[midway,left]{$\Gamma^I$};
    \draw [ultra thin,color=black] (2,1)--(2,3)
    node[midway,right]{$\Gamma^I$};
    \node at (1.5,2) {$\Omega^s$};
    \node at (1.5,0.5) {$\Omega^d$};
  \end{tikzpicture}
  \label{fig:Domain Example2}
\end{figure}
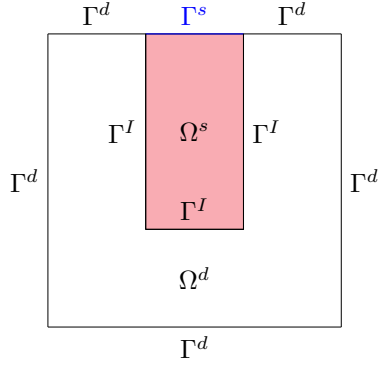

\Cref{fig:pressureex2} shows the pressure field in the wellbore and
microfractures while \Cref{fig:pressure in matrixex2} shows the
pressure field in the matrix.  The pressure difference between $p_h^m$
and $p_h$ in $\Omega^d$ results in fluid flow from the matrix to the
microfractures, while the pressure difference between $p_h$ in
$\Omega^d$ and $p_h$ in $\Omega^s$ drives the fluid from the
microfractures into the wellbore. The latter is observed in
\Cref{fig:velocityex2} in which we plot the streamlines and magnitude
of the velocity of the fluid flow in the microfractures and the
wellbore. We furthermore observe that once in the wellbore the
velocity magnitude of the fluid is significantly higher than in the
surrounding microfractures and that the fluid is driven towards the
outflow boundary $\Gamma^s$. We plot the streamlines and magnitude of
the velocity of the fluid in the matrix in \Cref{fig:velocity in
  matrixex2}. Here we observe that although fluid is driven towards
the wellbore, there is no fluid exchange between the matrix and the
wellbore as expected from the no-exchange interface condition on the
matrix velocity imposed by the model.

Moreover, we remark that mass is conserved pointwise on the elements
as predicted by \cref{eq:Numerical properties}. Indeed, we compute:
\begin{align*}
  \|\nabla \cdot u_h^s\|_{\Omega^s}
  &=1.0\cdot 10^{-11},
  \\
  \|\sigma \kappa_m (p_h^d-p_h^m)+\nabla \cdot u_h^d\|_{\Omega^d}
  &=2.4 \cdot 10^{-11},
  \\
  \|\sigma \kappa_m (p_h^m-p_h^d)+\nabla \cdot u_h^m\|_{\Omega^d}
  &=1.2 \cdot 10^{-16}.
\end{align*}

\begin{figure}[tbp]
  \centering
  \caption{The pressure and velocity solutions for the problem as set
    up in \Cref{ex2-vertical} with $\sigma = 0.9$. Left column:
    Pressure $p_h$ and velocity $u_h$ fields in $\Omega$. Right
    column: Pressure $p^m_h$ and velocity $u^m_h$ fields in the matrix
    in $\Omega^d$.  }
  \subfloat[Pressure $p_h$ in
  $\Omega$. \label{fig:pressureex2}]{\includegraphics[width=0.48\textwidth]{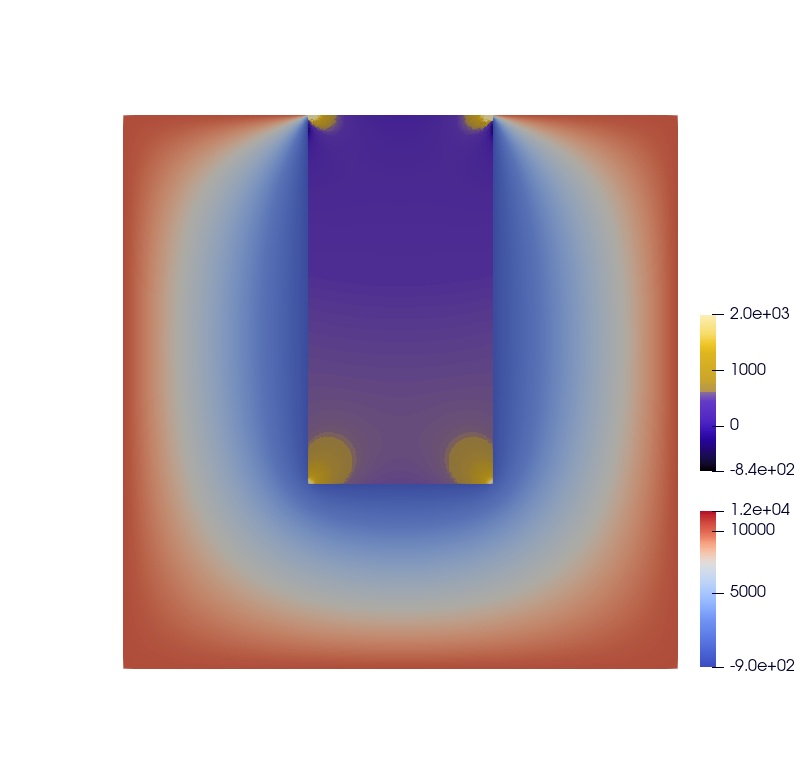}}
  \quad
  \subfloat[Pressure $p^m_h$ in the matrix in
  $\Omega^d$. \label{fig:pressure in matrixex2}]
  {\includegraphics[width=0.48\textwidth]{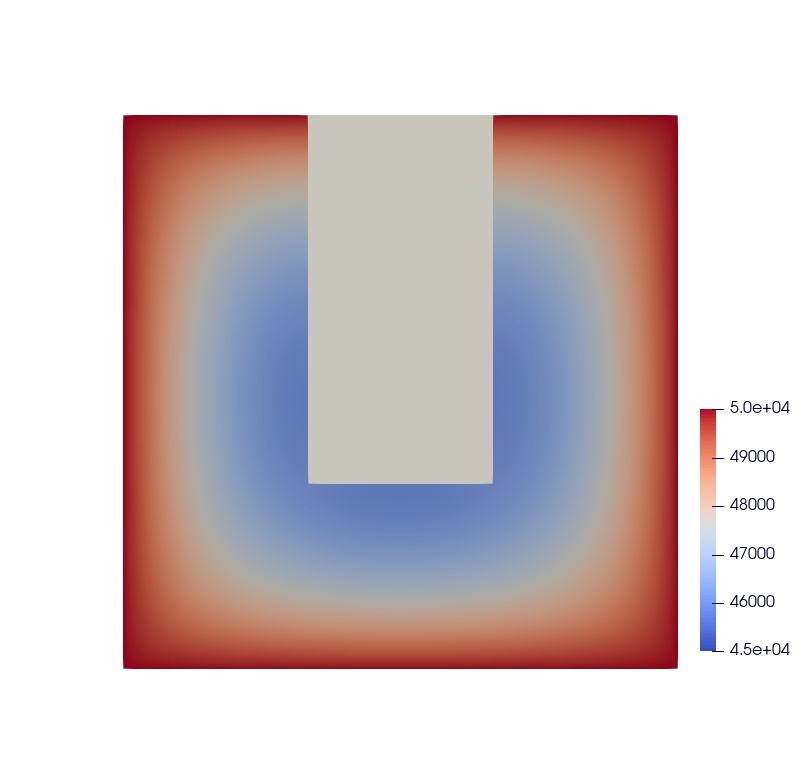}}
  \\
  \subfloat[Velocity field $u_h$ in
  $\Omega$. \label{fig:velocityex2}]{\includegraphics[width=0.48\textwidth]{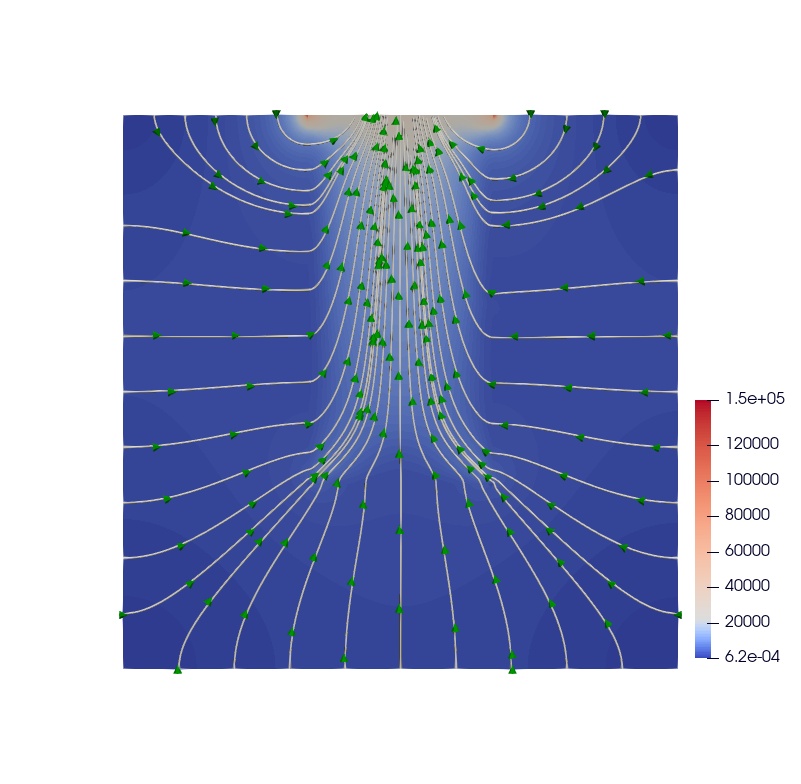}}
  \quad
  \subfloat[Velocity field $u^m_h$ in the matrix in
  $\Omega^d$. \label{fig:velocity in
    matrixex2}]{\includegraphics[width=0.48\textwidth]{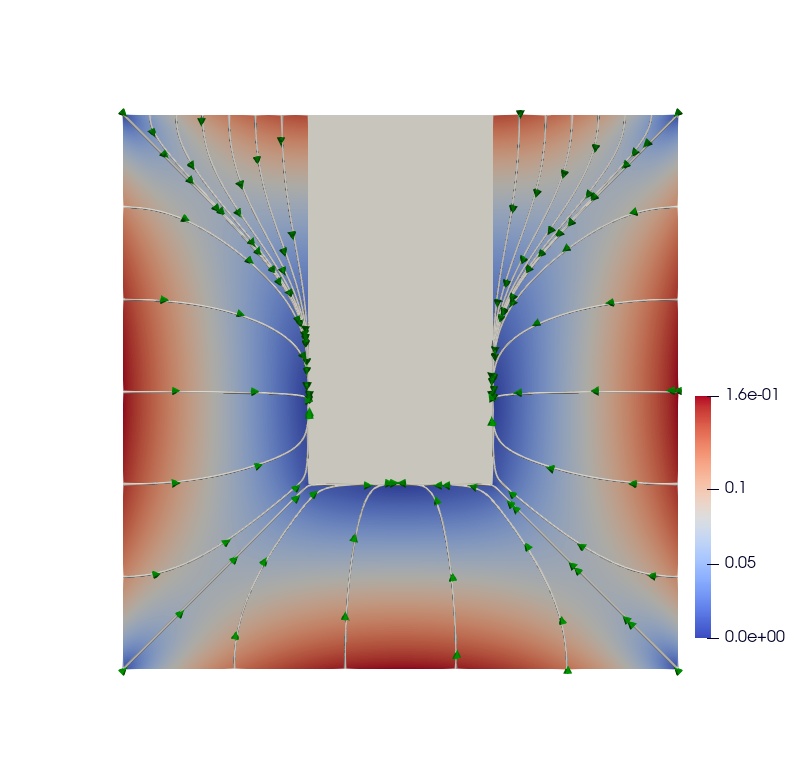}}
  \label{fig:Example2 figure}
\end{figure}

Finally, \Cref{fig: velocity and velocity in matrix field sigma=0.5,fig:
  velocity and velocity in matrix field sigma=0.1} show the velocity
fields obtained using shape factors $\sigma=0.5$ and $\sigma=0.1$,
respectively, with all other parameters the same as before. As
expected, the velocity magnitude decreases with decreasing $\sigma$.

\begin{figure}[tbp]
  \centering
  \caption{The velocity fields $u_h$ in $\Omega$ and $u_h^m$ in the
    matrix in $\Omega^d$ for the problem as set up in
    \Cref{ex2-vertical} with $\sigma=0.5$.}
  \subfloat[Velocity field $u_h$ in $\Omega$. \label{fig:velocityin
    matrix
    sigma05}]{\includegraphics[width=0.48\textwidth]{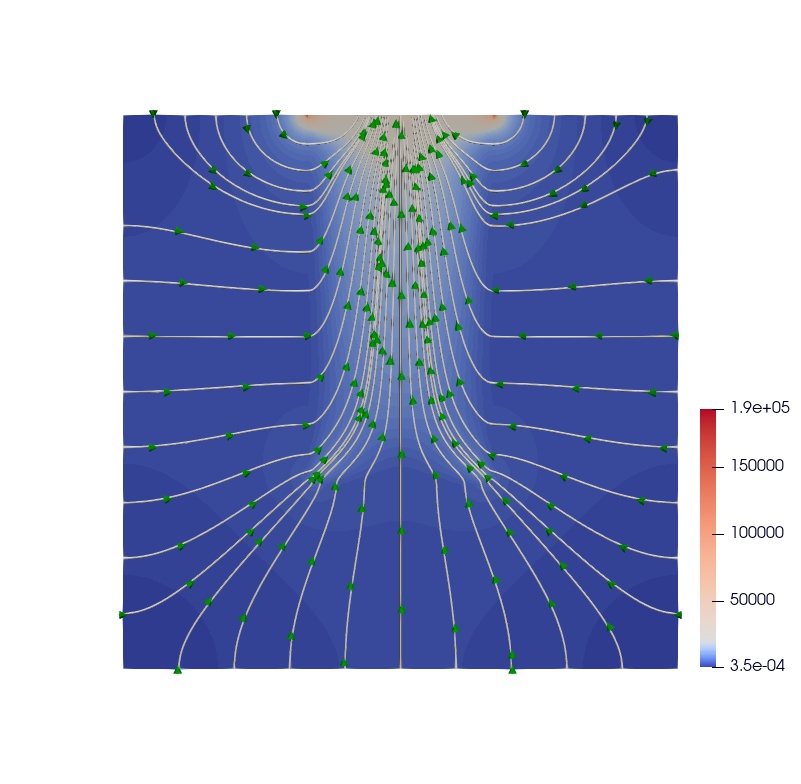}}
  \quad
  \subfloat[Velocity field $u_h^m$ in the matrix in
  $\Omega^d$.\label{fig:velocity in matrix
    sigma05}]{\includegraphics[width=0.48\textwidth]{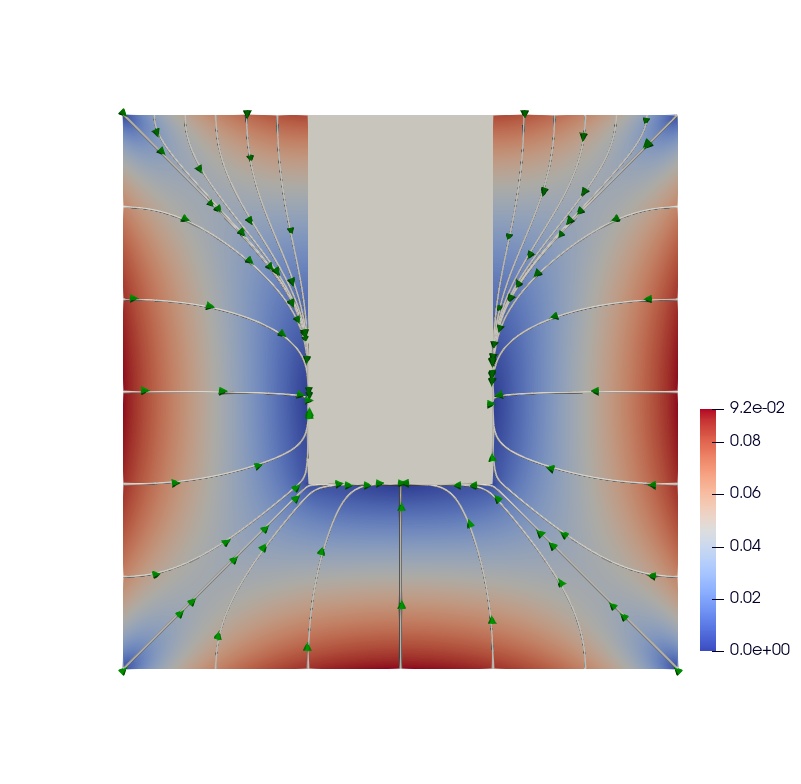}}
  \label{fig: velocity and velocity in matrix field  sigma=0.5}
\end{figure}

\begin{figure}[tbp]
  \centering
  \caption{The velocity fields $u_h$ in $\Omega$ and $u_h^m$ in the
    matrix in $\Omega^d$ for the problem as set up in
    \Cref{ex2-vertical} with $\sigma=0.1$.}
  \subfloat[Velocity field $u_h$ in $\Omega$. \label{fig:velocityin
    matrix
    sigma01}]{\includegraphics[width=0.48\textwidth]{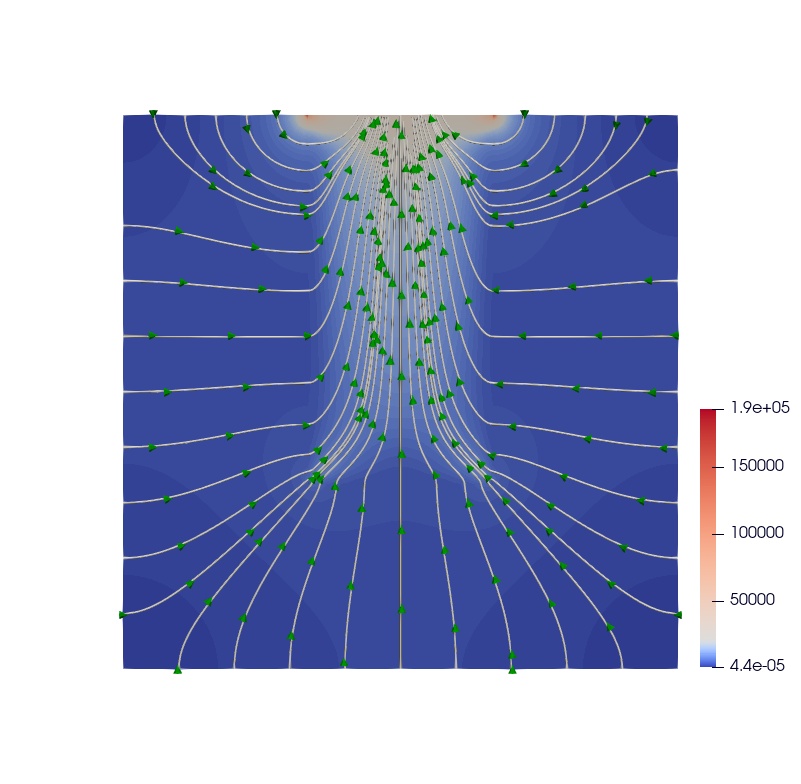}}
  \quad
  \subfloat[Velocity field $u_h^m$ in the matrix in
  $\Omega^d$. \label{fig:velocity in matrix
    sigma01}]{\includegraphics[width=0.48\textwidth]{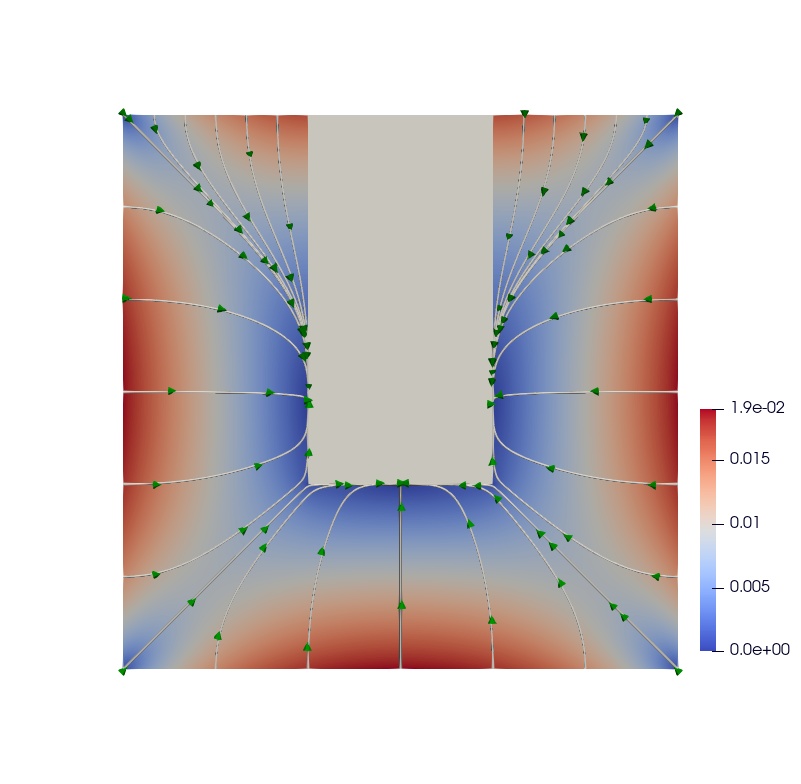}}
  \label{fig: velocity and velocity in matrix field  sigma=0.1}
\end{figure}

%---------------------------------------------------------------------
\subsubsection{Horizontal production wellbore}
\label{ex2-horizontal}

For this example, we change the domain such that
\begin{equation*}
  \Omega^s=(1/4,1)\times(1/4,1/2)\cup [1,5/4)\times (1/4,1/2]\cup (1,5/4)\times (1/2,3/2),
\end{equation*}
and $\Omega^d=(0,3/2)^2\setminus \overline{\Omega}^s$. The boundaries
are defined as
\begin{equation*}
  \begin{split}
  \Gamma^s &:= \cbr[0]{(x,y):\, y=3/2\text{ and } x\in (1,5/4)},
  \\
    \Gamma^d &:= \cbr[0]{(x,y):\, x=0 \text{ or } y=0 \text{ or } x=3/2 \text{ or } y=3/2}\setminus \Gamma^s ,
    \\
    \Gamma^I &:= \cbr[0]{(x,y):\, x=1/4 \text{ or } x=1 \text{ or } x=5/4\text{ or } y=1/4  \text{ or } y=1/2}.
  \end{split}
\end{equation*}
See \Cref{fig:Domain horizontal wellbore } for a depiction of the
domain and its boundaries. The parameters and boundary conditions are
as in \cref{eq:wellbore-parameters,eq:wellbore-bc}.
\begin{figure}[tbp]
  \centering
  \caption{A plot of the domain used in \Cref{ex2-horizontal,ss:ex3}.}   
  \begin{tikzpicture}[scale=2.8]
    \draw [color=purple!10!pink,fill = purple!10!pink](0.25,0.25) rectangle (1.25,0.5);
     \draw [color=purple!10!pink,fill = purple!10!pink](1,0.5) rectangle (1.25,1.5);
    \draw [ultra thin,color=black] (0,0)--(0,1.5) node[midway,left]{$\Gamma^d$};
    \draw [ultra thin,color=black] (0,1.5)--(1,1.5) node[midway,above]{$\Gamma^d$};
    \draw [ultra thin,color=black] (1.25,1.5)--(1.5,1.5) node[midway,above]{$\Gamma^d$};
    \draw [ultra thin,color=black] (1.5,1.5)--(1.5,0) node[midway,right]{$\Gamma^d$};
    \draw [ultra thin,color=black] (1.5,0)--(0,0) node[midway,below]{$\Gamma^d$};
    \draw [thin,color=blue] (1,1.5)--(1.25,1.5)
    node[ midway,above]{$\Gamma^s$};
    \draw [ultra thin,color=black] (0.25,0.25)--(1.25,0.25)
    node[midway,above]{$\Gamma^I$};
    \draw [ultra thin,color=black] (1.25,0.25)--(1.25,1.5)
    node[midway,right]{$\Gamma^I$};
    \draw [ultra thin,color=black] (1,1.5)--(1,0.5)
    node[midway,left]{$\Gamma^I$};
    \draw [ultra thin,color=black] (0.25,0.25)--(0.25,0.5)
    node[midway,left]{$\Gamma^I$};
    \draw [ultra thin,color=black] (0.25,0.5)--(1,0.5)
    node[midway,above]{$\Gamma^I$};
    \node at (1.125,1) {$\Omega^s$};
    \node at (0.75,0.125) {$\Omega^d$};
  \end{tikzpicture}
  \label{fig:Domain horizontal wellbore }
\end{figure}

The pressure field in the wellbore and microfractures is shown in
\Cref{fig:pressureexhorizontalwellbore} while the pressure field in
the matrix is shown in \Cref{fig:pressure in matrix horizontal
  wellbore}. The streamlines and the magnitude of the velocity of the
fluid flow in the microfractures and the wellbore and in the matrix
for $\sigma=0.9$, $0.5$, and $0.1$ are depicted in
\Cref{fig:velocityex horizontal wellbore,fig:velocity in matrix
  horizontal wellbore}, \Cref{fig:Example horizontal wellbore
  velocities sigma 05}, and \Cref{fig:Example horizontal wellbore
  velocities sigma 01}, respectively. As in \Cref{ex2-vertical}, the
difference between the matrix pressure and the pressure in the
microfractures drives the fluid from the matrix to the
microfractures. The difference between the pressure in the
microfractures and the pressure in the wellbore drives the fluid from
the microfractures to the wellbore. We also observe that the fluid
leaves the domain at $\Gamma^s$ with a velocity magnitude larger than
elsewhere in the domain and, as in \Cref{ex2-vertical}, that the
magnitude of the fluid velocity decreases as the shape parameter
$\sigma$ is decreased. As a final remark to this section, mass is
conserved pointwise on the elements with
\begin{align*}
  \|\nabla \cdot u_h^s\|_{\Omega^s}
  &=3.2\cdot 10^{-11},
  \\
  \|\sigma \kappa_m (p_h^d-p_h^m)+\nabla \cdot u_h^d\|_{\Omega^d}
  &=1.5 \cdot 10^{-10},
  \\
  \|\sigma \kappa_m (p_h^m-p_h^d)+\nabla \cdot u_h^m\|_{\Omega^d}
  &=6.7 \cdot 10^{-15}.
\end{align*} 

\begin{figure}[tbp]
  \centering
  \caption{The pressure and velocity solutions for the problem as set
    up in \Cref{ex2-horizontal} with $\sigma = 0.9$. Left column:
    Pressure $p_h$ and velocity $u_h$ fields in $\Omega$. Right
    column: Pressure $p^m_h$ and velocity $u^m_h$ fields in the matrix
    in $\Omega^d$.  }
  \subfloat[Pressure $p_h$ in
  $\Omega$. \label{fig:pressureexhorizontalwellbore}]{\includegraphics[width=0.48\textwidth]{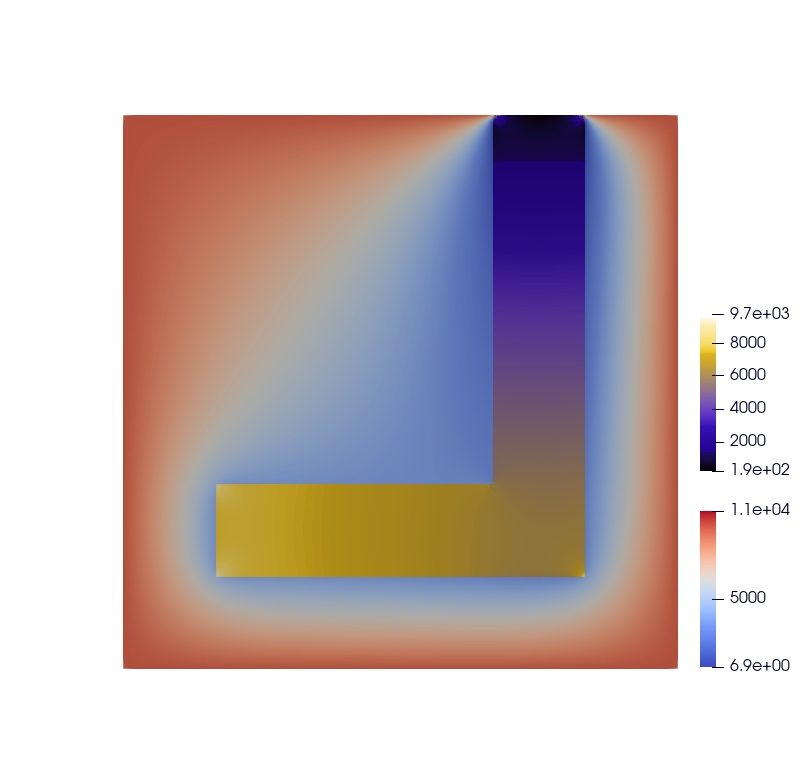}}
  \quad
  \subfloat[Pressure $p^m_h$ in the matrix in
  $\Omega^d$. \label{fig:pressure in matrix horizontal wellbore}]
  {\includegraphics[width=0.48\textwidth]{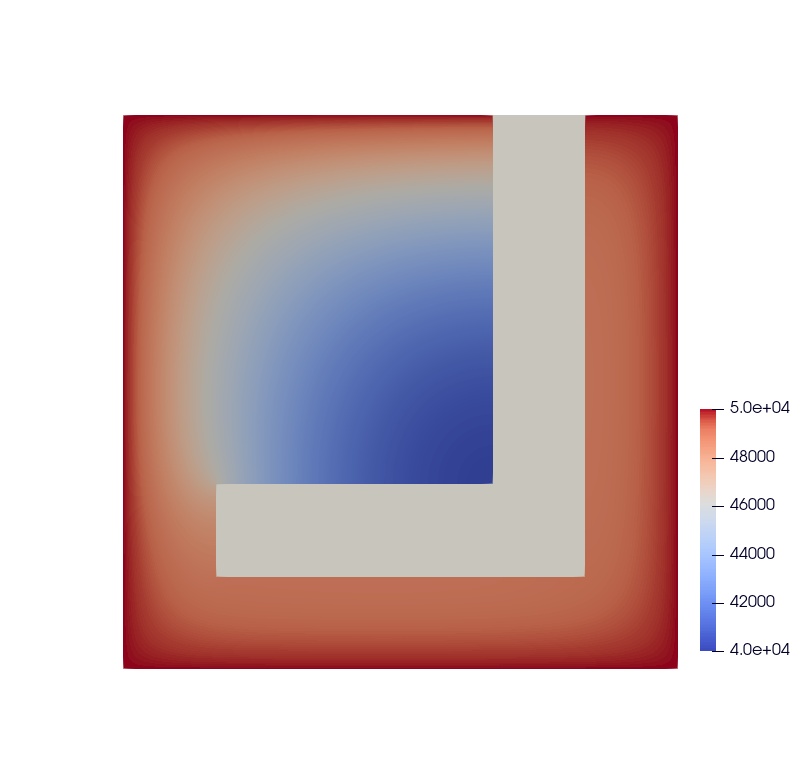}}
  \\
  \subfloat[Velocity field $u_h$ in
  $\Omega$. \label{fig:velocityex horizontal wellbore}]{\includegraphics[width=0.48\textwidth]{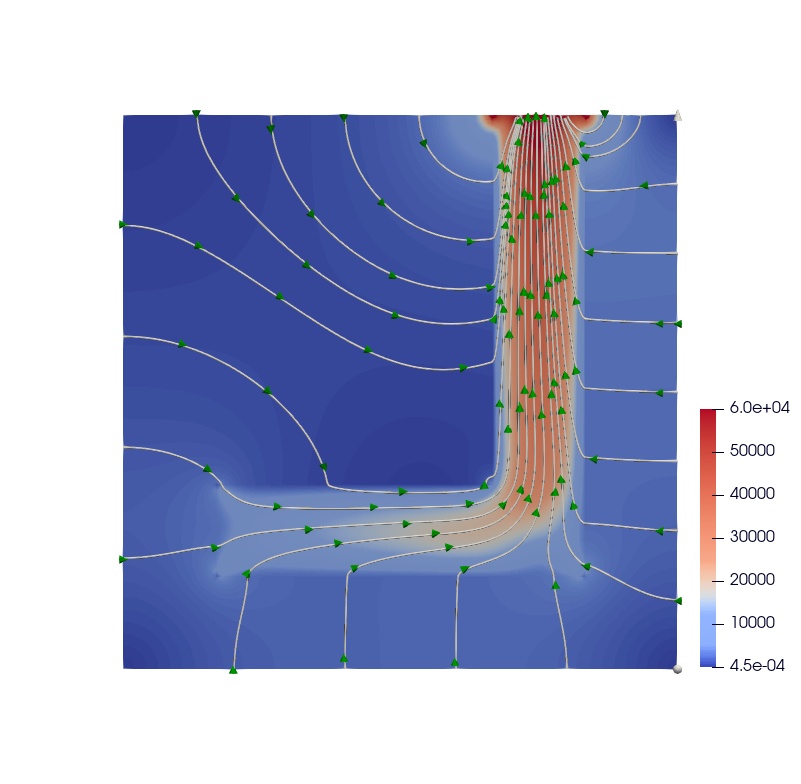}}
  \quad
  \subfloat[Velocity field $u^m_h$ in the matrix in
  $\Omega^d$. \label{fig:velocity in
    matrix horizontal wellbore}]{\includegraphics[width=0.48\textwidth]{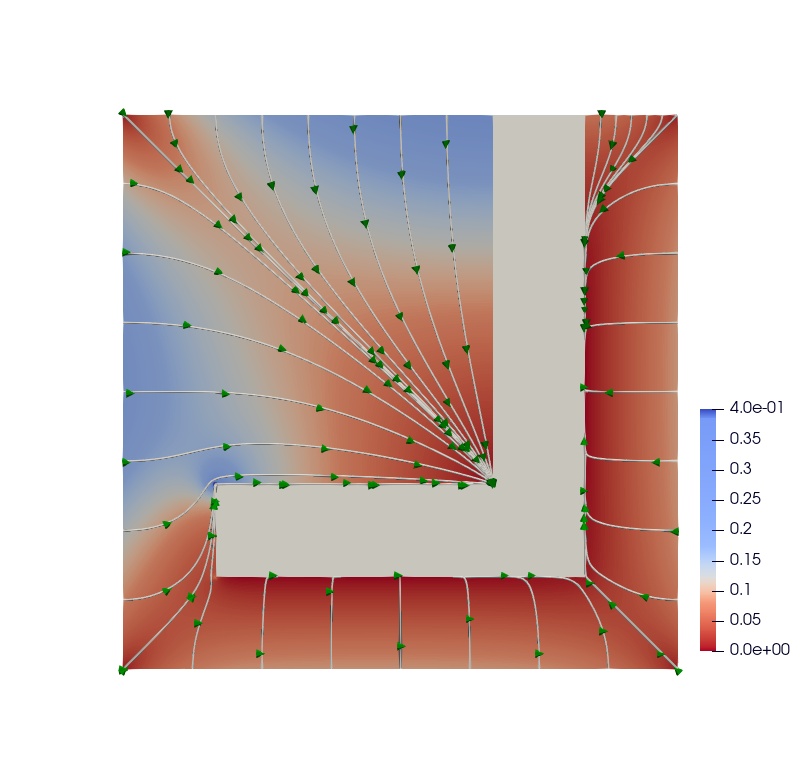}}
  \label{fig:Example horizontal wellbore figure}
\end{figure}

\begin{figure}[tbp]
  \centering
  \caption{The velocity solutions for the problem as set up in
    \Cref{ex2-horizontal} with $\sigma = 0.5$. Left column: Velocity
    $u_h$ field in $\Omega$. Right column: Velocity $u^m_h$ field in
    the matrix in $\Omega^d$. }
  \subfloat[Velocity field $u_h$ in $\Omega$. \label{fig:velocityex
    horizontal wellbore
    sigma05}]{\includegraphics[width=0.48\textwidth]{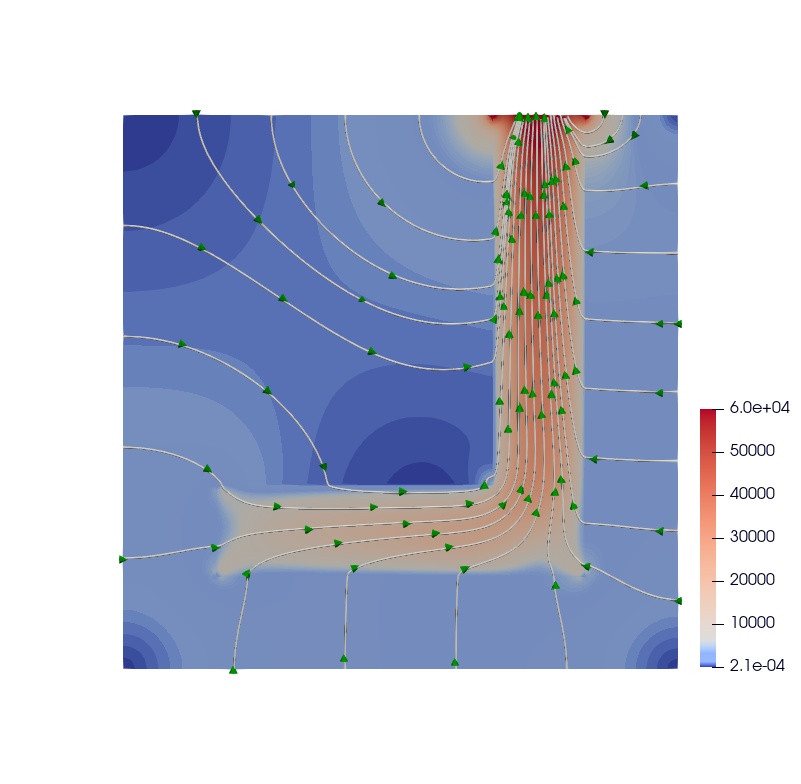}}
  \quad
  \subfloat[Velocity field $u^m_h$ in the matrix in
  $\Omega^d$. \label{fig:velocity in matrix horizontal wellbore sigma
    05}]{\includegraphics[width=0.48\textwidth]{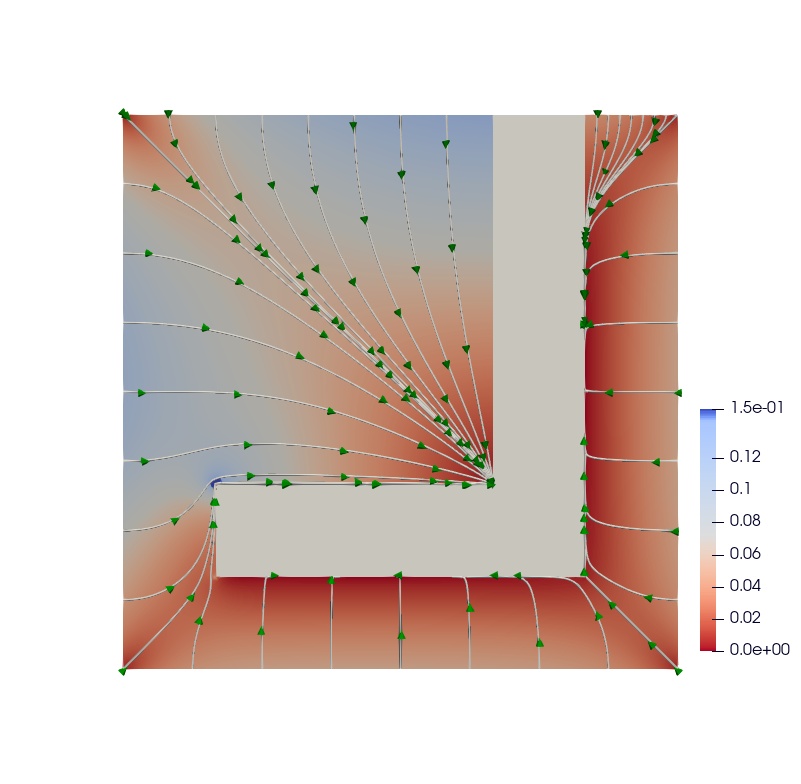}}
  \label{fig:Example horizontal wellbore velocities sigma 05}
\end{figure}

\begin{figure}[tbp]
  \centering
  \caption{The velocity solutions for the problem as set up in
    \Cref{ex2-horizontal} with $\sigma = 0.1$. Left column: Velocity
    $u_h$ field in $\Omega$. Right column: Velocity $u^m_h$ field in
    the matrix in $\Omega^d$.}
  \subfloat[Velocity field $u_h$ in $\Omega$. \label{fig:velocityex
    horizontal wellbore
    sigma01}]{\includegraphics[width=0.48\textwidth]{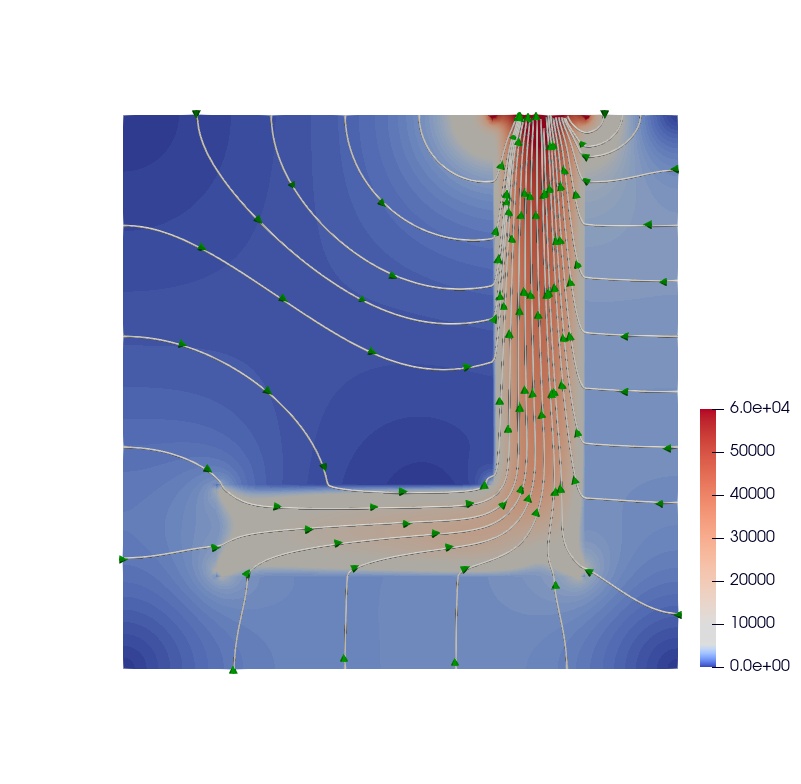}}
  \quad
  \subfloat[Velocity field $u^m_h$ in the matrix in
  $\Omega^d$. \label{fig:velocity in matrix horizontal wellbore sigma
    01}]{\includegraphics[width=0.48\textwidth]{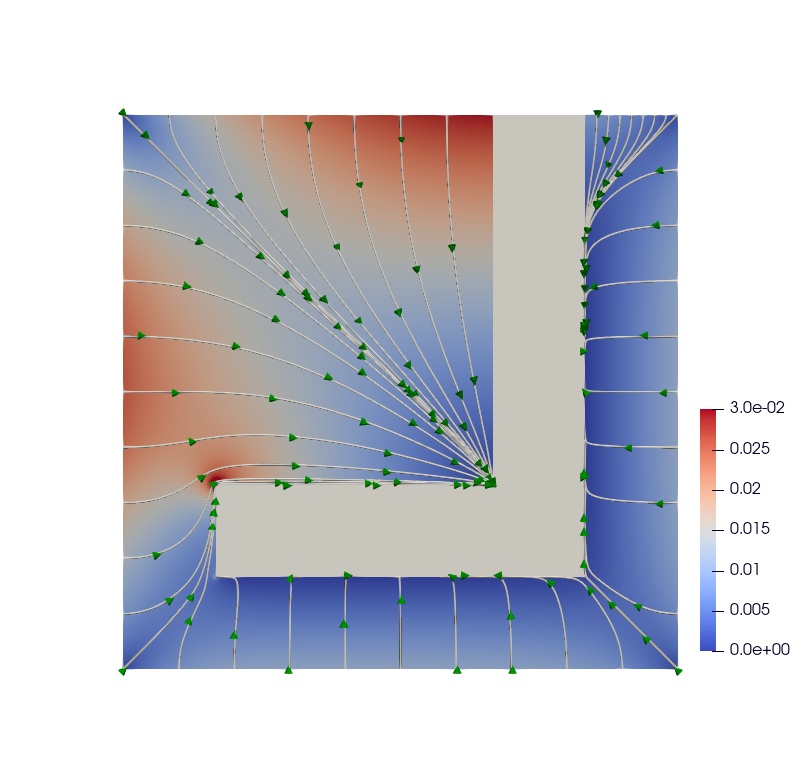}}
  \label{fig:Example horizontal wellbore velocities sigma 01}
\end{figure}

%---------------------------------------------------------------------
\subsection{Example 3}
\label{ss:ex3}

In this last example, we show that the discretization also performs
well on discontinuous data. We consider the same setup as in
\Cref{ex2-horizontal} changing only the permeabilities: on each
element in the mesh, $\kappa_f$ and $\kappa_m$ are now randomly
distributed constants such that $\kappa_f\in [10^{-2},1]$ and
$\kappa_m\in [10^{-6},10^{-4}]$ (see \Cref{fig:example3 horiz
  permeability} for a plot of the permeabilities).

Our results in \Cref{fig:Example3 horizontal wellbore figure} shows
the dependence of the flow on the permeability: the fluid follows a
non-uniform flow pattern in the dual-porosity region $\Omega^d$ as
opposed to the uniform flow field observed in \Cref{fig:Example
  horizontal wellbore figure}. The fluid flows from $\Omega^d$ into
the wellbore region $\Omega^s$, avoiding low permeability regions in
$\Omega^d$, but flowing freely in regions with high permeability. As
in \Cref{ex2-horizontal}, the fluid once again leaves the domain
through $\Gamma^s$. Finally, as shown by the results below, mass is
also conserved pointwise on the elements when dealing with
discontinuous permeabilities:
\begin{align*}
  \label{eq:masscons-ex3-hor}
  \|\nabla \cdot u_h^s\|_{\Omega^s}
  &=3.9\cdot 10^{-11},
  \\
  \|\sigma \kappa_m (p_h^d-p_h^m)+\nabla \cdot u_h^d\|_{\Omega^d}
  &=1.7 \cdot 10^{-10},
  \\
  \|\sigma \kappa_m (p_h^m-p_h^d)+\nabla \cdot u_h^m\|_{\Omega^d}
  &=7.7 \cdot 10^{-15}.
\end{align*} 

\begin{figure}[tbp]
    \centering
    \caption{The permeabilities used for the problem as described in
      \Cref{ss:ex3}. Left, the random permeability field
      $\kappa_f \in [10^{-2},1]$ in the microfractures, and right, the
      random permeability field $\kappa_m \in [10^{-6},10^{-4}]$ in
      the matrix.}
    \subfloat[Permeability $\kappa_f$ in
    microfractures. \label{fig:permeability}]{\includegraphics[width=0.48\textwidth]{./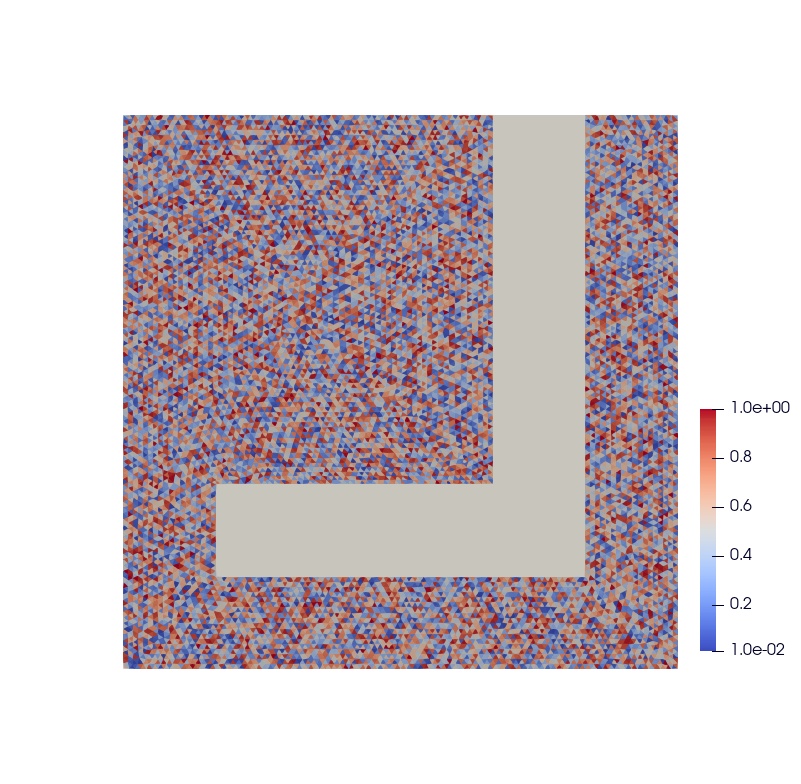}}
    \quad
    \subfloat[Permeability $\kappa_m$ in the
    matrix. \label{fig:velocity in
      matrix2}]{\includegraphics[width=0.48\textwidth]{./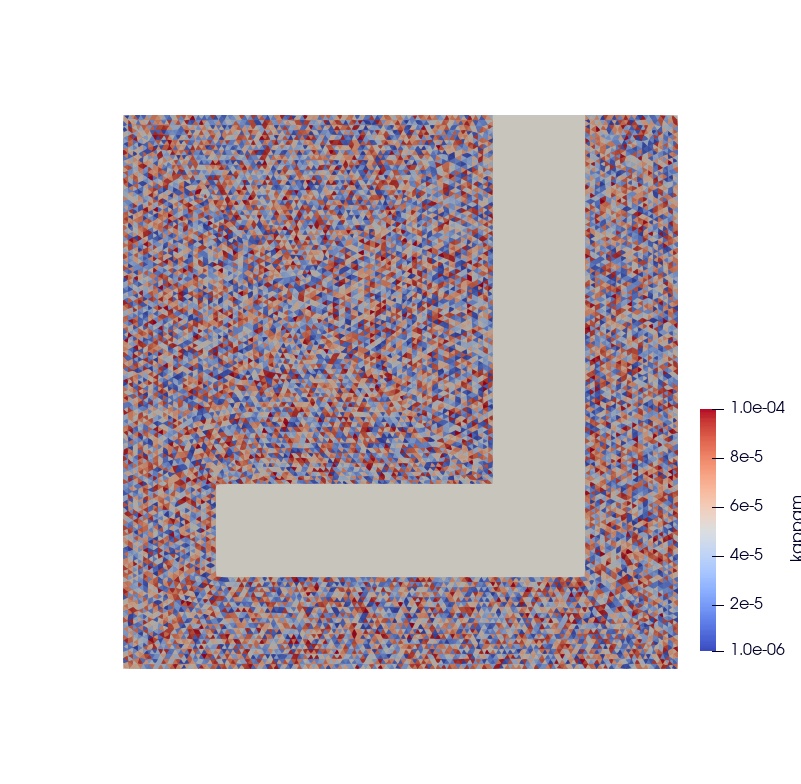}}
    \label{fig:example3 horiz permeability}
\end{figure}

\begin{figure}[tbp]
  \centering
  \caption{The pressure and velocity solutions for the problem as set
    up in \Cref{ss:ex3}.  Left column: Pressure $p_h$ and velocity
    $u_h$ fields in $\Omega$. Right column: Pressure $p_h^m$ and
    velocity $u_h^m$ fields in the matrix in $\Omega^d$. }
  \subfloat[Pressure $p_h$ in $\Omega$. \label{fig:horiz
    pressure3}]{\includegraphics[width=0.48\textwidth]{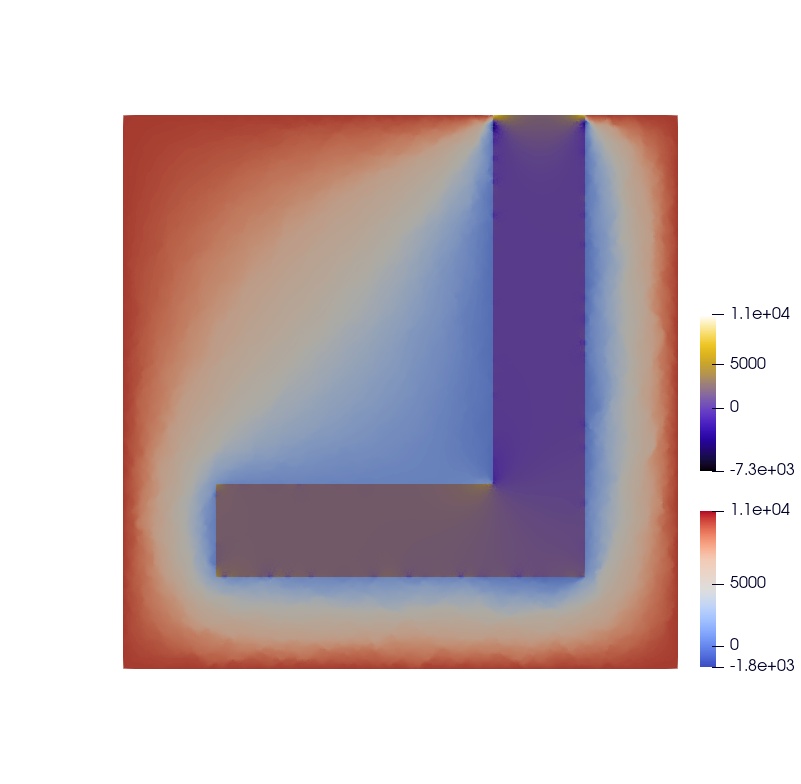}}
  \quad
  \subfloat[Pressure $p_h^m$ in the matrix in
  $\Omega^d$. \label{fig:pressure in matrix3 h well}]
  {\includegraphics[width=0.48\textwidth]{./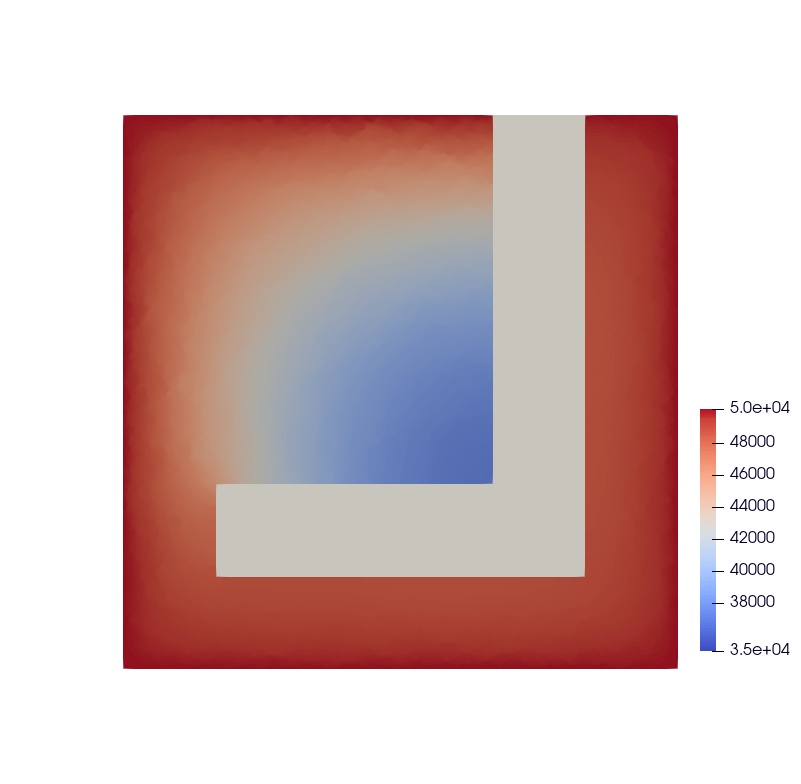}}
  \\
  \subfloat[Velocity field $u_h$ in $\Omega$. \label{fig:velocity3h
    wellbore}]{\includegraphics[width=0.48\textwidth]{./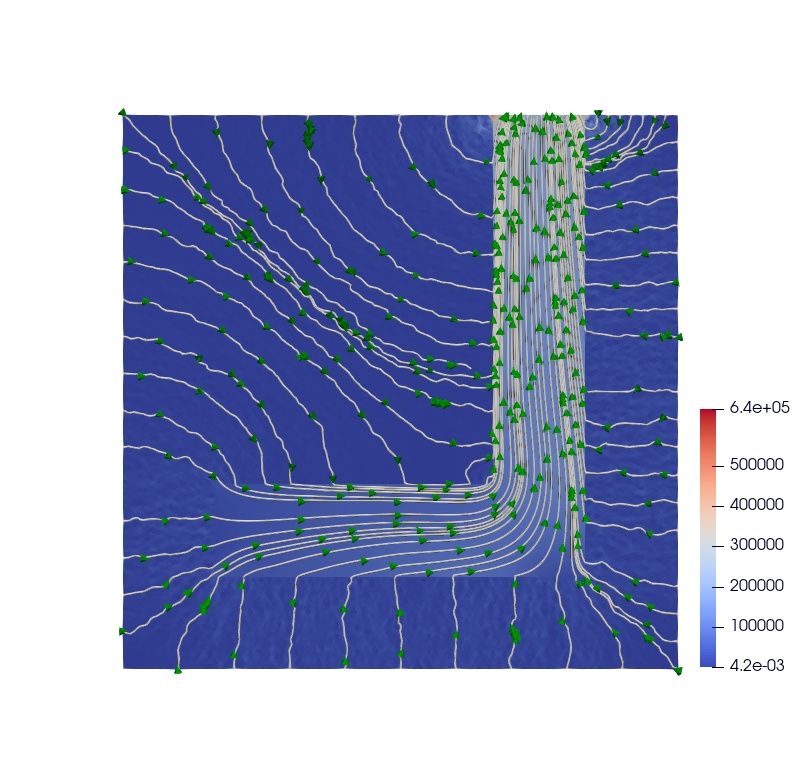}}
  \quad
  \subfloat[Velocity field $u_h^m$ in the matrix
  $\Omega^d$. \label{fig:velocity in
    matrix3}]{\includegraphics[width=0.48\textwidth]{./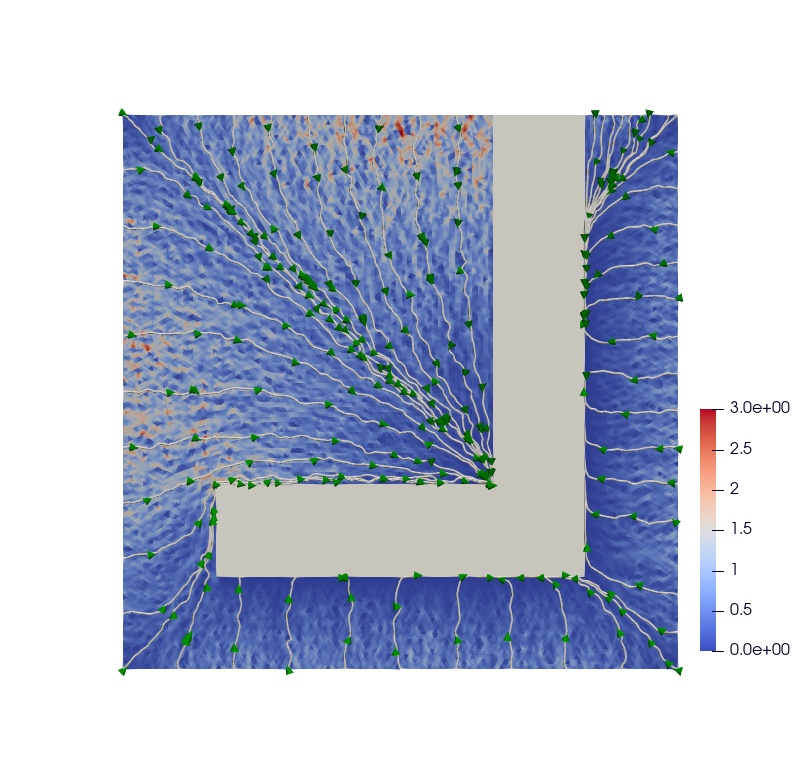}}
  \label{fig:Example3 horizontal wellbore figure}
\end{figure}

%---------------------------------------------------------------------
\section{Conclusions}
\label{sec:Conclusions}

In this paper, we presented a strongly conservative HDG method for the
dual-porosity-Stokes problem. We proved that the discrete problem is
well-posed and presented an a priori error analysis showing optimal
rates of convergence in the energy norm. Our theoretical findings are
supported with numerical examples.

%---------------------------------------------------------------------
\section*{Acknowledgements}

Aycil Cesmelioglu and Jeonghun J. Lee gratefully acknowledge support
by the National Science Foundation (grant numbers DMS-2110782 and
DMS-2110781) and Sander Rhebergen gratefully acknowledges support from
the Natural Sciences and Engineering Research Council of Canada
through the Discovery Grant program  (RGPIN-2023-03237).

%---------------------------------------------------------------------
\bibliographystyle{ieeetr}
\bibliography{references}
%---------------------------------------------------------------------
\end{document}